\documentclass[11pt]{article}

\usepackage[margin=1in]{geometry}
\usepackage{amsmath,amssymb,amsthm,mathtools}
\usepackage{bbm}
\usepackage[colorlinks=true,linkcolor=blue,citecolor=blue,urlcolor=blue]{hyperref}

\newtheorem{theorem}{Theorem}
\newtheorem{proposition}[theorem]{Proposition}
\newtheorem{lemma}[theorem]{Lemma}

\newtheorem{remark}[theorem]{Remark}
\newtheorem{definition}{Definition}

\newcommand{\R}{\mathbb R}
\newcommand{\E}{\mathbb E}
\newcommand{\Pp}{\mathbb P}
\newcommand{\tr}{\operatorname{tr}}
\newcommand{\Var}{\operatorname{Var}}
\newcommand{\diag}{\operatorname{diag}}
\newcommand{\op}{\operatorname{op}}
\newcommand{\one}{\mathbf 1}
\newcommand{\He}{\operatorname{He}}
\newcommand{\ip}[2]{\left\langle #1,#2\right\rangle}
\newcommand{\norm}[1]{\left\|#1\right\|}
\newcommand{\opnorm}[1]{\left\|#1\right\|_{\op}}
\newcommand{\fnorm}[1]{\left\|#1\right\|_F}
\newcommand{\stablerank}{r_{\mathrm{st}}}
\newcommand{\effectivedim}{d_{\mathrm{eff}}}

\title{Recovery of latent inner products from an anisotropic Gaussian random geometric graph}
\author{%
  Cheng Mao\thanks{School of Mathematics, Georgia Institute of Technology.
    \texttt{cheng.mao@math.gatech.edu}.}
  \and
  Vidya Muthukumar\thanks{School of Electrical and Computer Engineering and
    H.\ Milton Stewart School of Industrial and Systems Engineering,
    Georgia Institute of Technology. \texttt{vmuthukumar8@gatech.edu}.}
}
\date{\today}

\begin{document}
\maketitle

\begin{abstract}
We study the problem of recovering latent inner products from a random geometric graph with anisotropic Gaussian latent points. More precisely, for an i.i.d.\ sample $x_1, \dots, x_n \sim N(0,\Sigma)$ where $\Sigma \in \mathbb{R}^{d \times d}$, an edge $(i,j)$ is present in the graph if and only if $\langle x_i, x_j \rangle \ge \zeta$ for a threshold $\zeta$. We assume the threshold $\zeta$ to be chosen such that the average edge density of the graph is of constant order. To address the undesired degree fluctuations amplified by the anisotropy of the latent points, we consider the doubly centered adjacency matrix of the graph, and estimate the latent inner products using a rank-$d$ spectral approximation of the doubly centered matrix. The estimator obtains a mean squared error with a rate involving the stable rank of the covariance matrix $\Sigma$. Notably, the rate of estimation matches the state of the art for the isotropic case $\Sigma = I_d$, and permits an ill-conditioned covariance matrix with a diverging condition number. The analysis of the spectral method proceeds via the entrywise Hermite expansion of the doubly centered adjacency matrix with respect to the latent inner products. Instead of the standard trace method, it uses a decoupling argument recently introduced by Kaushik, Romberg, and Muthukumar (2025) to control nonlinear error terms.
\end{abstract}

\tableofcontents

\section{Introduction}
\label{sec:introduction}

Random geometric graphs are latent space models that encode unobserved vertex
features through distance or similarity~\cite{Penrose03,Hoff02}. 
We study a random geometric graph generated from hard-thresholding inner products between high-dimensional latent points.
Specifically, consider i.i.d.\ Gaussian latent
vectors $x_1,\ldots,x_n\sim N(0,\Sigma)$ 
for a positive definite $\Sigma \in \R^{d \times d}$. We observe the graph with adjacency matrix $A$ given by 
$A_{ij}=\mathbbm{1}\{\ip{x_i}{x_j}\ge \zeta\}$ 
for all $i \ne j$, where the threshold $\zeta$ is chosen so that $A$ has a constant expected edge density. 
Our goal is to estimate all normalized inner products $\frac{\langle x_i, x_j \rangle}{\sqrt{\E[\langle x_i, x_j \rangle^2]}}$ where $i \ne j$. 
(The normalization is
unavoidable: simultaneously rescaling the latent vectors and the threshold
does not change the graph.)

In recent years, much of the work on random geometric graphs has focused on statistical
inference for high-dimensional models.  Two central questions are detection (or testing), which asks whether
an observed graph contains latent geometry at all, and recovery (or estimation), which asks whether that geometry can be reconstructed.  For the hard-threshold random geometric graph with spherical latent points, detection has been studied across dense and
sparse regimes~\cite{BDER16,BBN20,LMSY24,DMS26}; smooth kernel
functions have also been considered~\cite{LiuRacz23,MWX26}.  Of particular
relevance here, \cite{EM20} and \cite{BBH24} study detection for anisotropic Gaussian geometry and show
how the critical dimension for detection depends on the spectrum of the
covariance. 

Turning specifically to recovery, spectral methods have been proposed to estimate latent
distances or inner products in spherical and related latent space
models with general kernel functions~\cite{Araya19,Araya20,EMP22,MWX26}. 
In the high-dimensional hard-threshold setting,
\cite{LS23} studies a Gaussian block model which includes the isotropic Gaussian model as a special case. 
This result, together with the information-theoretic lower bound in~\cite{MZ24}, establishes the near-optimal condition $d \ll n$ for latent inner-product recovery up to polylogarithmic factors in the isotropic setting. 
In this work, we provide an approach to recover the latent inner products when the latent points are anisotropic Gaussian.

\paragraph{Spectral estimators.}
Similar to all the aforementioned positive results on latent inner-product recovery, we also study spectral methods in this work. 
To see why spectral methods constitute a natural approach, we can expand the observation $A_{ij}=\mathbbm{1}\{\ip{x_i}{x_j}\ge \zeta\}$ in a suitable basis with respect to the inner products $\ip{x_i}{x_j}$, such as the Hermite or Gegenbauer polynomial basis (depending on whether the latents are spherical or Gaussian). 
As the first-order term in such an expansion is a scalar multiple of $\ip{x_i}{x_j}$, after removing the constant part, the adjacency matrix $A$ contains a 
scalar multiple of the latent Gram matrix $(\ip{x_i}{x_j})_{i,j=1}^n$ as its linear term, and in some special cases (such as spherical and isotropic Gaussian data), the higher-order nonlinear terms can be shown to be vanishing through techniques that are also employed in the study of empirical kernel matrix approximation in various high-dimensional regimes~\cite{EK10,GMMM21,MMM22}.
This is the basis of all the above results for spectral methods. 

The setting of anisotropic Gaussian latent vectors presents new challenges.
In particular, the remaining nonlinear terms in a natural Hermite expansion of $A$ contain an
important vertex-wise effect. 
Too see this, fix \(x_i \in \R^d\) and let \(x_j \sim N(0,\Sigma)\). Then the conditional connection probability $\Pp\{\langle x_i, x_j \rangle \ge \zeta \mid x_i\}$ depends
on \(x_i^\top\Sigma x_i\).  
Vertices with different values of this quadratic form
therefore have different conditional expected degrees.  The resulting
row-wise and column-wise fluctuations in $A$ can be as large in operator norm as the Gram matrix term.
In fact, this issue is already present for isotropic Gaussian vectors with $\Sigma = I_d$, because their random norms $x_i^\top\Sigma x_i = \|x_i\|^2$ affect the vertex degrees; the same effect is further amplified by anisotropy.

A simple remedy is to correct for degrees before applying spectral
truncation. Let us define $H:=I_n-\frac1n\one\one^\top$, where \(\one\) denotes the all-ones vector.  The \emph{doubly centered} matrix \(HAH\)
subtracts the row and column averages of $A$ (see \eqref{eq:double-center-degree-correct}). 
Equivalently, because \(H\one=0\), this operation removes any additive term of
the form \(a\one^\top+\one a^\top\) from $A$. 

Both the prior work~\cite{LS23} and the recent independent work \cite{FZ26} (see further discussion below) account for the degree effect in the
isotropic Gaussian model, although they implement the correction differently. 
In~\cite{LS23}, the estimator discards the leading
eigenpair of the adjacency matrix $A$ and reconstructs the latent inner products
from the next \(d\) eigenpairs, thereby removing the dominant degree direction
adaptively. Their method can therefore be viewed as a spectral form of degree
correction, even though it does not literally subtract the degrees. 
On the other hand, \cite{FZ26} motivates the degree correction via the
first-order Hoeffding projection of the Gaussian threshold kernel. Since
double-centering is the empirical analogue of removing the first-order components, they arrive at the same operation $HAH$ as what we use.

\paragraph{Main contributions.}
To recover the normalized inner products $\Big( \frac{\langle x_i, x_j \rangle}{\sqrt{\E[\langle x_i, x_j \rangle^2]}} \Big)_{i<j}$, the estimator $\hat S_d$ we use is a rank-$d$ spectral approximation of the doubly centered adjacency matrix $HAH$, and is formally defined in Equation~\eqref{eq:our-estimator}. 
Our main result, Theorem~\ref{thm:gram-recovery}, gives an explicit rate of estimation in the mean squared error that depends on the spectrum of the covariance matrix $\Sigma$. 
In particular, when the stable rank $\tr(\Sigma^2)/\opnorm{\Sigma}^2$ of
\(\Sigma\) is comparable to \(d\), the estimator $\hat S_d$ achieves the rate $O\left( \frac{d\log n}{n} + \frac 1d \right)$, which is the same as that in the independent work~\cite{FZ26} and improves upon the rate in~\cite{LS23} by a polylogarithmic factor. 

In the high-dimensional regime $d \to \infty$, the strong recovery condition $d \log n \ll n$ almost matches the impossibility condition $d \gtrsim n$ proved via rate-distortion theory in~\cite{MZ24}. 
Therefore, over any covariance class containing the isotropic model $\Sigma = I_d$, the rate we obtain is near-optimal in a minimax sense (but is not instance-wise optimal). 
More importantly, our rate of estimation allows ill-conditioned covariance matrices $\Sigma$: for example, even if $\Sigma$ has eigenvalues $i/d$ for $i = 1, \dots, d$ and thus a condition number equal to $d \to \infty$, its stable rank $\tr(\Sigma^2)/\opnorm{\Sigma}^2$ is approximately $d/3$, and so the same rate of estimation holds.

To obtain the recovery guarantee, the proof first controls the doubly centered adjacency matrix in operator norm, before passing to Frobenius recovery.  The main
technical challenge in the proof is the discontinuity of the threshold kernel.  Inner product kernels admit asymptotic spectral descriptions~\cite{ChengSinger13,FanMontanari19}, and nonasymptotic concentration is known
for Lipschitz kernels~\cite{AminiRazaee21}. 
These kernels, when the univariate function is analytic, can also be approximated by carefully chosen low-degree approximations in various high-dimensional regimes (such as the linear regime~\cite{EK10} and the polynomial scaling regime~\cite{GMMM21,MMM22}), most commonly via the trace method.
However, these results do not directly control the error terms here.  We instead build on and adapt a recent decoupling approach of~\cite{KRM25}.
In particular, we separate the first three Hermite components of the doubly centered matrix from the rest, where the linear term is the signal, the quadratic and cubic terms are controlled individually, and the complete higher-order residual is bounded as a single kernel
matrix, all using tools adapted from~\cite{KRM25}.

\paragraph{Independent work.}
As we were finishing the current work, we became aware of a recent independent work~\cite{FZ26}, which treats sparse hard-threshold random geometric graphs with isotropic spherical or Gaussian points and sharpens the rates and conditions in \cite{LS23}. 
In contrast, we focus on random geometric graphs with \emph{anisotropic} Gaussian latent points in the dense regime but assume a constant edge density. Our results allow a general covariance matrix $\Sigma$ whose spectrum manifests in the recovery guarantee through multiple notions of effective dimension and stable rank.

We note that although our paper and the independent work \cite{FZ26} both use double-centering and decoupling, our treatment of the nonlinear noise term differs significantly from that of~\cite{FZ26}. They control the entire noise term as a whole, through population integral operator estimates based on a radial spherical harmonic decomposition. We instead work with the Hermite expansion, treating the quadratic, cubic, and higher-order components separately. 

\paragraph{Organization.}
This paper is organized as follows.
Section~\ref{sec:setting} introduces the anisotropic Gaussian random geometric graph model that we study, and formalizes the desired metrics for inner-product recovery.
In Section~\ref{sec:main-results} we introduce our estimator and recovery guarantee, and provide a brief proof sketch.
Section~\ref{sec:main-proofs} proves the theorem through intermediate operator norm
and centered Frobenius guarantees, and
Section~\ref{sec:technical-ingredients} supplies the auxiliary kernel-matrix
and concentration estimates.

\paragraph{Notation.} We use capital letters (both English and Greek) to denote matrices and lowercase letters to denote both scalars and vectors (which can either be distinguished from context, or we explicitly point out). Let $I_n$ denote the $n \times n$ identity matrix. We use $\one$ to denote the all-ones vector and use $\mathbbm{1}$ to denote the indicator function. 
Throughout, \(\opnorm{M}\) and \(\fnorm{M}\) denote the operator and Frobenius
norms of a matrix \(M\), respectively, and an unsubscripted norm
\(\norm{v}\) denotes the Euclidean norm of a vector. When used, other vector norms are explicitly specified. For a random variable \(X\) and \(p\ge1\), we write
\(\|X\|_{L^p}:=(\E|X|^p)^{1/p}\). For conditional moments, we use, for example,
\(\|X\|_{L^p(x\mid z)}:=(\E_x[|X|^p\mid z])^{1/p}\).
We define $N(0,1)$ to be the standard Gaussian distribution, denote by $\varphi(t) := \frac{1}{\sqrt{2\pi}}e^{-t^2/2}$ its probability density function (pdf), and denote by $\Phi(t)=\Pp_{g \sim N(0,1)}\{g \le t\}$ its cumulative distribution function (cdf). We also write $\bar \Phi(t) = 1-\Phi(t)$. We use the notation $C$ to denote universal constants that change from line to line, and $C_{\odot}$ to denote constants that may depend on auxiliary problem parameters that change from line to line. None of our constants exhibit any dependence on the number of samples $n$ or the data dimension $d$. We also use big-Oh notation for more informal exposition in a few places. 
For a symmetric matrix \(B\), the operation \(\Pi_d^+(B)\) retains its largest
\(\min\{d,\#\{j:\lambda_j(B)>0\}\}\) positive eigenvalues and replaces all
remaining eigenvalues by zero. 
For a function $f$, $f'$ denotes its first derivative.
When not explicitly specified, all expectations and probabilities are taken over the randomness in the data $\{x_1,\ldots,x_n\}$.
When used, $[n]$ is shorthand for the set $\{1,\ldots,n\}$.

\section{Setting and preliminaries}
\label{sec:setting}

We begin by defining the anisotropic Gaussian random geometric graphs that we study in this work.

\subsection{Anisotropic random geometric graphs}

Let \(x_1,\ldots,x_n\in\R^d\) be i.i.d.\ samples from \(N(0,\Sigma)\),
where $\Sigma \in \R^{d \times d}$ and we assume without loss of generality that \(\Sigma\) is positive definite\footnote{This is without loss of generality because if $\Sigma$ is singular, we may restrict the latent vectors to their support which is a linear subspace and this restriction preserves all pairwise inner products.}.
We denote the data matrix by
\[
  X=\begin{bmatrix}x_1^\top\\ \vdots\\ x_n^\top\end{bmatrix} \in \R^{n \times d},
\]
and define $$\tau_k := \tr(\Sigma^k)$$ for all integers $k \geq 1$. (Observe that in the special case of isotropic covariance, we have $\tau_k = d$ for all $k$.)
We observe a random geometric graph consisting of $n$ vertices, where vertex $i$ corresponds to sample $x_i$.
The edges are specified by the following definition of the adjacency matrix:
\[
  A_{ij}=\mathbbm{1}\{\ip{x_i}{x_j}\ge \zeta\} \cdot \mathbbm{1}\{i \neq j\},
\]
where $\zeta > 0$ is a parameter that we will specify shortly.
This is a \emph{threshold-based random geometric graph}: an edge $(i,j)$ is present if the inner product between corresponding samples $x_i$ and $x_j$ is sufficiently large.
We write
\[
  p:=\Pp\{\ip{x_i}{x_j}\ge\zeta\}
\]
for the expected edge density of $A$. 
Throughout this work we set
$\zeta$ in order to ensure a \emph{constant edge density}, i.e., $p\asymp1$.
It can be verified that this requirement corresponds to setting $\zeta = O( \sqrt{\tau_2} )$. %

\subsection{The main goal: Recovery of inner products}

In this section, we state the formal goal of this work, which is \emph{inner-product recovery}. 
In view of the scale ambiguity of the model\footnote{Observe that for any $s > 0$, replacing the tuple $(X,\Sigma,\zeta)$ by $(\sqrt{s} X, s \Sigma, s \zeta)$ does not change the adjacency matrix.}, our goal is to recover the scale-invariant normalized inner products.  More precisely, since
\begin{equation*}
  \tau_2=\tr(\Sigma^2)= \E[\langle x_i, x_j \rangle^2]
\end{equation*}
for any $i < j$, we aim to recover
$\left(\frac{\ip{x_i}{x_j}}{\sqrt{\tau_2}}\right)_{1 \leq i < j \leq n}$.  
This specific rescaling is chosen in order to make the typical value of the inner products to be at constant order.
This goal can be equivalently expressed in terms of a matrix estimation problem: we wish to estimate the normalized data Gram matrix $XX^T/\sqrt{\tau_2}$. 
We provide a formal definition below.
\begin{definition}\label{def:expected-ip-recovery}
Denoting our candidate estimator by $\hat S_d$, we will measure estimation error of the inner products by the normalized squared Frobenius norm as defined below:
\[
\frac{1}{n^2} \fnorm{\hat S_d - \frac{XX^T}{\sqrt{\tau_2}}}^2 = \frac{1}{n^2} \sum_{i=1}^n \sum_{j=1}^n \left((\hat S_d)_{ij} - \frac{\ip{x_i}{x_j}}{\sqrt{\tau_2}}\right)^2.
\]
Accordingly, we state that an estimator achieves \emph{strong expected inner-product recovery} with respect to a sequence $(n,d=d_n, \Sigma=\Sigma_n)_{n \geq 1}$ when the mean squared error satisfies
\[
\E\left[\frac{1}{n^2} \fnorm{\hat S_d - \frac{XX^T}{\sqrt{\tau_2}}}^2\right] \to 0,
\]
where convergence is defined along the specified asymptotic.
\end{definition}
Expanding the definition of the Frobenius norm as above makes clear why it is a natural estimation error metric: leaving aside the contribution of the diagonal error terms, the normalized squared Frobenius norm considers the average of the squared inner-product recovery error over all edges.
Note that expected inner-product recovery can be converted into inner-product recovery in probability, i.e., convergence in probability to $0$ of the error $\frac{1}{n^2} \fnorm{\hat S_d - \frac{XX^T}{\sqrt{\tau_2}}}^2$, via a simple application of Markov's inequality.
(This application, however, yields slow logarithmic rates of convergence of the probability to $1$, as in~\cite{KRM25}.) 

In what follows, we describe our estimator and recovery guarantees, noting that strong expected inner-product recovery follows directly from the provided non-asymptotic error rates that hold for finite values of $n, d$ and $\Sigma$.

\section{Main results}\label{sec:main-results}

We build on the simple spectral estimator that was proposed by~\cite{LS23} for the case of random geometric graphs with isotropic latent points. 
The spectral estimator is essentially the rank-$d$ spectral approximation of the centered adjacency matrix, and~\cite{LS23} show that the error of the spectral estimator is sufficiently small provided that a suitable \emph{kernel matrix approximation} result holds on the adjacency matrix $A$.
To see the connection, recall that the off-diagonal entries of $A$ are given by the threshold function 
$$
A_{ij} = \mathbbm{1}\left(\frac{\ip{x_i}{x_j}}{\sqrt{\tau_2}} \ge \frac{\zeta}{\sqrt{\tau_2}}\right) = \mathbbm{1}\left(Z_{ij} \ge t\right),
$$ 
where we define $Z_{ij} := \frac{\ip{x_i}{x_j}}{\sqrt{\tau_2}}$ and $t := \zeta/\sqrt{\tau_2}$. 
Since $\E Z_{ij}^2=1$, the central limit theorem shows that $Z_{ij}$ is asymptotically standard Gaussian as $d \to \infty$ (if $\Sigma$ is not ill-conditioned). 
Letting $p_{\mathrm G}:=\bar\Phi(t) = 1-\Phi(t)$, the degree-one Hermite polynomial approximation of $\mathbbm{1}\left(Z_{ij} \ge t\right)$ is given by $p_{\mathrm G} + \varphi(t) Z_{ij}$. 
As a result, the entrywise first-order (affine) approximation of $A$ is then given by 
\begin{equation}
  A \approx p_{\mathrm G} \mathbf{1}\mathbf{1}^\top + \varphi(t) \frac{XX^T}{\sqrt{\tau_2}}. \label{eq:A-linear-approx-hermite}
\end{equation}
Therefore, to show that the inner products can be recovered from $A$ via a spectral method, it suffices to show that the above approximation error is sufficiently small---the operator norm of the higher-order (degree-two and above in $Z_{ij}$) component of $A$ decays to $0$ at a sufficiently fast rate as a function of $n$. 

Interestingly, 
the decomposition \eqref{eq:A-linear-approx-hermite} is not sufficient for understanding an anisotropic random geometric graph, even in the dense regime with Gaussian latent points that we study. 
We would ostensibly aim to show that that the higher-order component of $A$ with respect to the Hermite polynomial expansion decays.
This turns out not to be the case; the quadratic component is comparable in operator norm to the linear component---see Section~\ref{sec:quadratic-hermite} for an explicit calculation.
This suboptimality echoes similar suboptimalities in approximating empirical kernel matrices in the polynomial scaling regime $n \propto d^q$ through the univariate Hermite decomposition; see~\cite{KRM25}.

\subsection{Degree-corrected, double-centered estimator}

We consider a simple \emph{degree-corrected}, or \emph{double-centered}, matrix-valued estimator.
Recall that we defined $H=I_n-\frac1n\one\one^\top$.
To understand why we are referring to this operation as ``degree-correcting" or ``vertex-centering", observe that
\[
\begin{aligned}
(HAH)_{ij} &= A_{ij} - \frac{1}{n} \sum_{i' = 1}^n A_{i'j} - \frac{1}{n} \sum_{j' = 1}^n A_{ij'} + \frac{1}{n^2} \sum_{i' = 1}^n \sum_{j'= 1}^n A_{i'j'} \\
&= A_{ij} - \frac{\deg(i)}{n} - \frac{\deg(j)}{n} + \frac{2}{n^2} |E(A)|, \label{eq:double-center-degree-correct}   
\end{aligned}
\]
where $E(A)$ denotes the edge set of the graph $A$. 
We will later show that the quadratic component of $HAH$ in its Hermite expansion is significantly smaller as compared to that of the original adjacency matrix $A$.

Since $H \mathbf{1} = 0$ and in view of \eqref{eq:A-linear-approx-hermite}, we note that the matrix $HAH$ no longer has a degree-zero term. 
Therefore, our estimator takes the following form in the idealized scenario where the normalized threshold $t$, and hence the Gaussian reference probability $p_{\mathrm G}$, is known:
\begin{align}\label{eq:our-idealized-estimator}
\tilde S_d = \Pi_d^+\left(\frac{1}{\varphi(t)} H(A + p_{\mathrm G} I_n) H\right),
\end{align}
where the $+p_{\mathrm G} I_n$ term fills in the diagonal of $A$ as we aim to estimate $\frac{XX^T}{\sqrt{\tau_2}}$, and $\Pi_d^+$ denotes the operation of taking the rank-$d$ spectral approximation. 
More precisely, for a symmetric matrix \(B\) with eigendecomposition
\(B=\sum_{k=1}^n\lambda_k u_k u_k^\top\) where
\(\lambda_1\ge\cdots\ge\lambda_n\), define
\[
\Pi_d^+(B):=\sum_{k=1}^d(\lambda_k)_+u_k u_k^\top,
\qquad \text{where } (x)_+:=\max\{x,0\}.
\]

In reality, neither $t$ nor $p$ is known.  We use the plug-in quantities
\begin{align*}
    \hat p &:= \min\{1 - n^{-2},\max\{n^{-2},\hat p_0\}\} \text{ where } \hat p_0 := \frac{2|E|}{n(n-1)}, \notag \\
    \hat t &:= \bar\Phi^{-1}(\hat p), \\
    \hat\beta_1 &:= \varphi (\hat t). \notag
\end{align*}
Here $\hat p$ estimates the true edge probability $p$ directly and is also
used as a plug-in estimate of $p_{\mathrm G}$\footnote{The clipping operations for the estimator of the edge probability ensure that the estimator is well-defined on the exceptional empty or complete-graph events. These events will occur with vanishing probability in our random graph setup.}; correspondingly, $\hat t$ and
$\hat\beta_1$ estimate $t$ and the linear Hermite coefficient
$\beta_1=\varphi(t)$.
We plug these estimates in to obtain the ultimate estimator
\begin{align}\label{eq:our-estimator}
      \hat S_d
  :=\Pi_d^+\left(\frac{1}{\hat\beta_1}H(A+\hat p I_n)H\right),
\end{align}

\subsection{Inner-product recovery guarantee}

We now present our inner-product recovery guarantee in terms of a non-asymptotic bound on the normalized squared Frobenius error.
In addition to $\tau_2$ and $\tau_1$, the error bound is expressed in terms of
the \emph{stable rank} of the covariance matrix $\Sigma$ defined as
\begin{equation*}
  \stablerank:=\frac{\tr(\Sigma^2)}{\opnorm{\Sigma}^2}
  =\frac{\tau_2}{\opnorm{\Sigma}^2}.
\end{equation*}
Since \(\Sigma\) is positive definite, we have
\begin{equation}
  1\le \stablerank\le \frac{\tau_1^2}{\tau_2}\le d.
  \label{eq:stable-rank-ordering}
\end{equation}
Our main theorem stated below expresses the Frobenius recovery error in terms
of these quantities and the number of samples $n$.

\begin{theorem}
\label{thm:gram-recovery}
Consider any sequence $\{(n,d,\Sigma)\}_{n \geq 1}$ with \(n \to\infty\), and assume that \(t = \zeta/\sqrt{\tau_2}\) stays in a fixed compact interval.
Then, for all sufficiently large \(n\), we have
\begin{equation}
  \E\left[
    \frac1{n^2}\fnorm{\hat S_d-\frac{XX^\top}{\sqrt{\tau_2}}}^2
  \right]
  \le C_t\left(
    \frac{d\tau_1^2\log n}{n\tau_2 \stablerank}
    +\frac{d}{\stablerank^2}
  \right).
  \label{eq:normalized-uncentered-recovery}
\end{equation}
Observe that this ensures strong expected inner-product recovery (according to Definition~\ref{def:expected-ip-recovery}) provided that the right-hand side of \eqref{eq:normalized-uncentered-recovery} tends to zero. 
\end{theorem}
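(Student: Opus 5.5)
We reduce the Frobenius bound to an operator-norm bound on the nonlinear residual of the doubly centered matrix, and then use the standard rank-$d$ spectral approximation argument. Write $G:=XX^\top/\sqrt{\tau_2}$ and $\beta_1=\varphi(t)$. Expanding each off-diagonal entry $A_{ij}=\mathbbm{1}\{Z_{ij}\ge t\}$ in Hermite polynomials of $Z_{ij}$ gives
\[
H(A+p_{\mathrm G}I_n)H=\beta_1 HGH+HEH,
\]
where $E$ collects the constant-order discrepancy $p-p_{\mathrm G}$, the diagonal corrections, and all Hermite components of order $\ge2$. The constant term is killed by $H$.

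The key intermediate statement is an operator-norm bound. We aim to show that, with probability $1-O(n^{-2})$,
\[
\opnorm{HEH}\le C_t\left(\frac{\tau_1}{\sqrt{\tau_2}}\sqrt{\log n}+\frac{n}{\stablerank}\right).
\]
We split $E$ into its degree-two Hermite part, its degree-three part, and the remaining higher-order residual $R$. The quadratic part is $\tfrac{\beta_2}{2}\bigl(G^{\circ2}-\one\one^\top\bigr)$ off the diagonal. We write $G^{\circ 2}$ as a Gram matrix of $x_i\otimes x_i/\sqrt{\tau_2}$. Its mean part, $\tfrac{1}{\tau_2}x_i^\top\Sigma x_j$-type cross terms together with the rank-one directions $a\one^\top+\one a^\top$ with $a_i=(x_i^\top\Sigma x_i-\tau_2)/\tau_2$, is removed by $H$. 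What survives has operator norm $\lesssim n\opnorm{\Sigma}^2/\tau_2+\sqrt{n}\cdot(\text{lower order})=O(n/\stablerank)$. The cubic term is handled similarly via $G^{\circ3}$, whose leading structure is $\tfrac{1}{\tau_2^{3/2}}X\Sigma X^\top$-type, of norm $O(n/\stablerank)$ after centering. The residual $R$ is bounded as a single kernel matrix by the decoupling argument adapted from~\cite{KRM25}. That argument conditions on half the samples and controls $\opnorm{R}$ by $\sqrt{n}$ times polylogarithmic factors plus diagonal-type terms of size $\tau_1/\sqrt{\tau_2}$. The Berry--Esseen-type discrepancy between the law of $Z_{ij}$ and $N(0,1)$ enters through the Hermite coefficients with error $O(1/\sqrt{\stablerank})$.

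The plug-in step comes next. $\hat p_0$ concentrates around $p$ at rate $O(n^{-1})$ in standard deviation, since it is a U-statistic with nondegenerate Hoeffding projection. Because $t$ lies in a compact set, $\hat t$ and $\hat\beta_1$ are Lipschitz functions of $\hat p$ there, so $|\hat\beta_1-\beta_1|\lesssim |\hat p-p|+|p-p_{\mathrm G}|$. Replacing $\beta_1$ by $\hat\beta_1$ and $p_{\mathrm G}$ by $\hat p$ therefore perturbs the operator norm by at most $|\hat\beta_1/\beta_1-1|\cdot\opnorm{HGH}+|\hat p-p_{\mathrm G}|$. This is absorbed into the above bound, using $\opnorm{HGH}\lesssim (n\opnorm{\Sigma}+\tau_1)/\sqrt{\tau_2}$.

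To pass to the Frobenius bound, note that $HGH$ has rank at most $d$ and is positive semidefinite. For any PSD rank-$d$ matrix $M$ and symmetric $B$, the inequality $\fnorm{\Pi_d^+(B)-M}^2\le Cd\opnorm{B-M}^2$ holds, since $\Pi_d^+(B)-M$ has rank at most $2d$. Applying it with $M=HGH$ gives
\[
\fnorm{\hat S_d-HGH}^2\lesssim d\,\opnorm{HEH/\beta_1}^2 .
\]
Dividing by $n^2$ and using $\tau_1^2\ge\tau_2$ yields
\[
\frac{d}{n^2}\Bigl(\frac{\tau_1^2\log n}{\tau_2}+\frac{n^2}{\stablerank^2}\Bigr),
\]
which gives the second term and a version of the first. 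To get exactly $\frac{d\tau_1^2\log n}{n\tau_2\stablerank}$, the $\sqrt{\log n}$ error term should be sharpened to $\sqrt{n\tau_1^2\log n/(\tau_2\stablerank)}$. We expect that sharpening to fall out of the decoupling bound on $R$, whose variance proxy carries a factor $\opnorm{\Sigma}^2/\tau_2$.

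It remains to add back the centering: $G-HGH=\tfrac1n(\one\one^\top G+G\one\one^\top)-\tfrac1{n^2}\one\one^\top G\one\one^\top$. Its Frobenius norm squared has expectation $\lesssim n\,\E\norm{G\one/\sqrt n}^2\lesssim n(\tau_1^2/\tau_2+n)/n$. Equivalently, the average row vector $\bar x=\tfrac1n\sum_i x_i$ contributes $n\norm{X\bar x}^2/\tau_2$, whose expectation is $O(n\tau_1^2/\tau_2+ n^2/n)$; divided by $n^2$ this is within the stated rate. The expectation of the main term is obtained by splitting on the high-probability event. On its complement, the trivial bound $\fnorm{\hat S_d}\le\sqrt d\,\opnorm{\cdot}\le \mathrm{poly}(n)$ and Cauchy--Schwarz give a negligible contribution.

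We expect the main obstacle to be the operator-norm control of the higher-order residual $R$ with the sharp dependence on $\Sigma$ through $\stablerank$. The indicator kernel is discontinuous, so its Hermite tail decays slowly, and anisotropy makes $Z_{ij}$ non-Gaussian. We would first try to bound $R$ as a whole kernel matrix via the decoupling of~\cite{KRM25}: condition on one half of the sample, bound the conditional second moment of $R$'s entries by the Hermite tail mass of order $\sum_{k\ge4}\beta_k^2/k!$ up to $O(\stablerank^{-1})$ corrections, and use matrix concentration for the cross block.
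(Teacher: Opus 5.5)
\textbf{This proposal has genuine gaps.} Your architecture is the paper's: double centering, separate treatment of the degree-$2$ and degree-$3$ Hermite parts, a whole-tail residual handled by KRM-style decoupling, the rank-$2d$ truncation step, and the plug-in and centering corrections. But the quantitative core is wrong or missing.

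First, your intermediate bound $\opnorm{HEH}\le C_t(\tau_1\sqrt{\log n/\tau_2}+n/\stablerank)$ is false. The off-diagonal entries of the nonlinear part have variance of constant order; for $\Sigma=I_d$ it is about $p_{\mathrm G}(1-p_{\mathrm G})-\varphi(t)^2>0$. Hence $\fnorm{HEH}\asymp n$ and $\opnorm{HEH}\ge\fnorm{HEH}/\sqrt n\gtrsim\sqrt n$. For $\Sigma=I_d$ and $d=n^{2/3}$, your right-hand side is only $O(n^{1/3}\sqrt{\log n})$. For the same reason the centered quadratic part is not $O(n/\stablerank)$. The paper's bounds contain this Wigner-type piece explicitly: $\sqrt{n(1+\eta_n^3)\log n}$, plus $\mathcal B_n\sqrt{n\log n/(\tau_2\stablerank)}$ from the quadratic term. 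That piece is exactly what the first term of the theorem pays for, via $\tau_1^2\ge\tau_2\stablerank$. Your closing remark therefore has the direction reversed: passing from $\tau_1\sqrt{\log n/\tau_2}$ to $\tau_1\sqrt{n\log n/(\tau_2\stablerank)}$ is a weakening, and an unavoidable one.

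Second, and more seriously, nothing in the proposal brings the nonlinear terms down to the scale $n/\stablerank$, and the inputs you suggest cannot do so. In the decoupling bound (Proposition~\ref{prop:offdiag-krm}) the dangerous quantities are the conditional mean, entering as $n(\E h(x)^2)^{1/2}$, and the off-diagonal part of the correlation matrix, entering as $(n\E\opnorm{\Gamma})^{1/2}$. Bounding conditional second moments does not control the correlation term. A Berry--Esseen error $O(\stablerank^{-1/2})$ in the mean, or $O(\stablerank^{-1})$ in the entries of $\Gamma$, gives an operator norm of order $n/\sqrt{\stablerank}$, which is the size of the signal $\opnorm{S}$.

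The paper never compares the law of $Z_{ij}$ with $N(0,1)$. Conditionally on $x_i$, $Z_{ij}\sim N(0,a_i)$ exactly. The variance-perturbed Hermite coefficients of $q_{\ge4}$ satisfy $\gamma_0,\gamma_1=O((a-1)^2)$ and $\gamma_2,\gamma_3=O(|a-1|+(a-1)^2)$. The expansion $\Gamma_{ij}=\sum_r r!\gamma_r(a_i)\gamma_r(a_j)\rho_{ij}^r$, with $\rho_{ij}$ of size $\effectivedim^{-1/2}$, then yields $\E m(x)^2\lesssim\effectivedim^{-2}$ and $\E\opnorm{\Gamma-\diag(\Gamma)}\lesssim n/\effectivedim^2$. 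The quadratic and cubic terms likewise need their exact correlation matrices. These are $2\tau_2^{-2}Y(\Sigma\otimes\Sigma)Y^\top$, together with a self-bounding estimate for $\E\opnorm{Y^\top Y}$, and $6\ip{u_i}{u_j}^3+9\delta_i\delta_j\ip{u_i}{u_j}$.

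Your descriptions of these terms are also inaccurate. $H$ annihilates only matrices of the form $a\one^\top+\one b^\top$, and $\He_2(Z_{ij})=\tr(C_{x_i}C_{x_j})/\tau_2+r_i+r_j$ contains no $x_i^\top\Sigma x_j$ term. The low-rank part of $\He_3(Z_{ij})$ is $3(\delta_i+\delta_j)Z_{ij}-6x_i^\top\Sigma^2x_j/\tau_2^{3/2}$. Centering does not touch it, and bounding it by $\max_i|\delta_i|\opnorm{G}$ already loses a $\sqrt{\log n}$ factor against $\sqrt d/\stablerank$.

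Finally, the bad-event step does not close. Clipping only gives $\hat\beta_1\gtrsim n^{-2}$ and $\opnorm{\hat S}\lesssim n^3$, which a failure probability $O(n^{-2})$ cannot absorb. Moreover, decoupling yields moment bounds, not tail bounds of that form. The paper instead isolates the edge-density event, whose failure probability is exponentially small and on which $\hat\beta_1\asymp1$, and works in $L^2$ otherwise. (A minor point: a U-statistic with nondegenerate projection fluctuates at $n^{-1/2}$, not $n^{-1}$, though this is harmless here.)
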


The two terms in the rate in \eqref{eq:normalized-uncentered-recovery} arise from different sources. The first term $\frac{d\tau_1^2\log n}{n\tau_2 \stablerank}$ is a finite-sample
fluctuation term which vanishes as the sample size $n$ grows. The second term $\frac{d}{\stablerank^2}$ controls the nonlinear part of the threshold kernel: the quadratic, cubic,
and higher-order Hermite components are all bounded at the common scale
$d/\stablerank^2$.

To make the conditions more transparent and comparable to prior work~\cite{LS23}, we specialize Theorem~\ref{thm:gram-recovery}
to sequences satisfying \(\stablerank\asymp d\).  
Equation~\eqref{eq:stable-rank-ordering} then implies that
\(\tau_1^2/\tau_2\asymp d.\)
This assumption includes uniformly well-conditioned covariance matrices but
is strictly weaker: \(\lambda_{\min}(\Sigma)\) may tend to zero and the
condition number may diverge. 
In this regime, the rate in Theorem~\ref{thm:gram-recovery} simplifies to
\[
O \left( \frac{d\log n}{n}+\frac1d \right) ,
\]
and hence the Frobenius mean-squared error vanishes if $d\to\infty$ and $n \gg d \log n$. 
In the dense regime, the above rate improves the result in~\cite{LS23} by polylogarithmic factors and matches that in the independent work~\cite{FZ26}. 

Finally, we note that although the strong recovery condition $n \gg d \log n$ matches the impossibility condition $n \lesssim d$ in \cite{MZ24} up to a logarithmic factor, the \emph{instance-optimal} condition for recovery remains unknown for a specific sequence $\Sigma = \Sigma_n$. This interesting question is beyond the scope of the current work, and we leave it to future research.

\subsection{Proof sketch}\label{sec:proof-sketch}

In this section, we outline the main steps in the proof of Theorem~\ref{thm:gram-recovery}.
The full proof is provided in Section~\ref{sec:main-proofs}.

\paragraph{Reduction to operator norm error for the idealized double-centered estimator.} Because kernel approximation-theoretic bounds are often proved in operator norm~\cite{EK10,GMMM21,MMM22}, it will be convenient to first work with an operator norm analysis and then derive a suitable bound on the Frobenius error.
Moreover, it will be convenient to reduce to an error analysis of the idealized estimator defined in Equation~\eqref{eq:our-idealized-estimator} that assumes oracle access to the probability $p_G$ or, equivalently, the threshold $t$.
We describe the steps of this reduction below.
We refer to the estimators before spectral truncation is applied as $\hat S$ and $\tilde S$ respectively.
We also define the \emph{double-centered estimand} and denote the original estimand respectively as follows:
\[
S := \frac{H XX^T H}{\sqrt{\tau_2}}, \qquad G := \frac{XX^T}{\sqrt{\tau_2}}.
\]
\begin{enumerate}
\item First, we expect the original estimand and its double-centered version to be very close to each other. Lemma~\ref{lem:centered-uncentered} shows this formally by bounding $\fnorm{S - G}$; therefore, it suffices to bound the Frobenius error $\frac{1}{n} \fnorm{\hat S_d - S}$.
\item Second, since the rank of the double-centered estimand $S$ is at most $d$ and we defined $\hat S_d = \Pi_d^+(\hat S)$, we have $\fnorm{\hat S_d - S} \lesssim \sqrt{d} \opnorm{\hat S - S}$, which is a simple linear algebra fact stated in Lemma~\ref{lem:op-to-frobenius-truncation}. Therefore, it suffices to bound the normalized operator norm error of the original untruncated estimator, i.e.,  $\frac{\sqrt{d}}{n} \opnorm{\hat S - S}$. This is done in Proposition~\ref{prop:operator-denoising}.
\item Finally, the proof of Proposition~\ref{prop:operator-denoising} decomposes the error into the idealized estimation error $\opnorm{\tilde S - S}$ and the error arising from estimating edge density $\opnorm{\tilde S - \hat S}$.
\end{enumerate}
Therefore, the heart of our analysis lies in characterizing the idealized estimation error in operator norm, i.e., controlling $\opnorm{\tilde S - S}$.
We next outline how this is done.

\paragraph{The Hermite decomposition and approximation analysis.}

If \(\stablerank\to\infty\), or equivalently
\(\|\Sigma\|_{\op}/\sqrt{\tau_2}\to0\), the central limit theorem implies that the random variable corresponding to the normalized inner product, i.e.,
\[
  Z_{ij}:=\frac{\ip{x_i}{x_j}}{\sqrt{\tau_2}},
\]
is asymptotically standard Gaussian for each fixed pair \(i\ne j\).  This limit
motivates the following Hermite polynomial expansion of the centered threshold function with respect to $L^2(N(0,1))$:
\begin{equation}
  \mathbbm{1}\{z\ge t\}-p_{\mathrm G}
  =\sum_{k\ge1}\beta_k\He_k(z),\text{ where }
  \beta_k=\frac{\varphi(t)\He_{k-1}(t)}{k!} ,
  \label{eq:hermite-coefficients}
\end{equation}
and \(\{\He_k\}_{k \geq 1}\) denote the probabilists' Hermite polynomials.  Indeed, if
\(g\sim N(0,1)\), then for every index \(k\ge1\),  we have
\[
  \frac1{k!}\E\!\left[
    \{\mathbbm{1}\{g\ge t\}-p_{\mathrm G}\}\He_k(g)
  \right]
  =
  \frac1{k!}\int_t^\infty \He_k(z)\varphi(z)\,dz
  =
  \frac{\varphi(t)\He_{k-1}(t)}{k!},
\]
where the last equality follows from the identity
\((\varphi\He_{k-1})'=-\varphi\He_k\). 
(Observe that the coefficients $\{\beta_k\}_{k \geq 1}$ are dimensionless when $t$ stays in a fixed compact interval, as we have assumed in this work. Moreover, in this case $\beta_1$ is bounded above and away from zero and does not scale with $d$; therefore, we can divide relevant quantities by $\beta_1$.)

We emphasize that \eqref{eq:hermite-coefficients} is an orthogonal expansion with respect to the standard Gaussian distribution, not the actual law of $Z_{ij}$, and the convergence is only justified in $L^2(N(0,1))$. 
Accordingly, in the proof we never use \eqref{eq:hermite-coefficients} as an infinite
series evaluated at $Z_{ij}$; the infinite Hermite expansion is invoked only after conditioning on one vector $x_i$ so that $Z_{ij}$ is exactly Gaussian.

Equation~\eqref{eq:hermite-coefficients} together with the definition of the idealized estimator $\tilde S$ (see also Equation~\eqref{eq:our-idealized-estimator}) implies that, to control the desired error $\opnorm{\tilde S - S}$, it suffices to bound the operator norm of the \emph{double-centered version of} the off-diagonal component of the nonlinear (higher-order) kernel matrix whose $(i,j)$-th entry is defined as $\sum_{k \geq 2} \beta_k \He_k(Z_{ij})$; the matrix is denoted as $\sum_{k \geq 2} \beta_k H \Delta^{(k)} H$.
This is the technical crux of our proof, and has to be done carefully in a series of steps that we outline below:
\begin{enumerate}
    \item Unlike the related literature on empirical kernel matrix approximation in the polynomial scaling regime~\cite{GMMM21,MMM22,KRM25}, it is insufficient to control the individual higher-order Hermite kernel matrices and apply the triangle inequality---this leads to a series on $k$ that is not summable.
    Instead, the ``whole-tail residual" needs to be controlled directly.
    \item The quadratic and cubic terms, corresponding to $H \Delta^{(2)} H$ and $H \Delta^{(3)} H$, turn out to need to be characterized separately from the rest of the whole-tail residual. For the cubic term, Proposition~\ref{prop:krm-fixed-degree} combines the decoupling argument of~\cite{KRM25} with an exact 
    representation of its correlation matrix using tensor algebra.
    On the other hand, the quadratic term is more delicate; as informally discussed while motivating our estimator in Section~\ref{sec:main-results}, the double-centering operation is crucial in order to remove the prohibitively large contribution from the entry-wise conditional expectation of the quadratic Hermite kernel over one of the arguments.
    We show in Proposition~\ref{prop:quadratic-centered} that, in fact, $\opnorm{H \Delta^{(2)} H} \ll \opnorm{\Delta^{(2)}}$ as desired.
    The proof of Proposition~\ref{prop:quadratic-centered} also invokes~\cite{KRM25}, but only after replacing the quadratic kernel by its canonical version, obtained by subtracting the first-order contributions associated with its two arguments. This replacement is used only in the analysis and is not part of our estimator. 
    \item After handling the quadratic and cubic terms as outlined above, the ``whole tail residual matrix" $\sum_{k \geq 4} \beta_k H \Delta^{(k)} H$ is characterized by Proposition~\ref{prop:tail}.
    The proof of Proposition~\ref{prop:tail} also invokes~\cite{KRM25} together with a careful characterization of the correlation matrix entries corresponding to the whole-tail kernel matrix, and in particular several properties of the Hermite coefficients $\{\beta_k\}_{k \geq 1}$ that we prove in the lemmas accompanying the proof of Proposition~\ref{prop:tail}.
\end{enumerate}
Ultimately, putting together Proposition~\ref{prop:quadratic-centered}, Proposition~\ref{prop:krm-fixed-degree} and Proposition~\ref{prop:tail} yields the desired bound on the idealized estimation error in operator norm, i.e., $\opnorm{\tilde S - S}$.
Combining this with the outlined path to Frobenius recovery yields Theorem~\ref{thm:gram-recovery}.

\section{Proof of the main result}
\label{sec:main-proofs}

We introduce shorthand notations
\begin{equation*}
\mu:=\opnorm{\Sigma}, \qquad
  \effectivedim:=\frac{\tau_2^2}{\tau_4}.
\end{equation*}
Here $\effectivedim$ is the effective dimension of $\Sigma^2$ and satisfies
\begin{equation}
  1\le\stablerank\le\effectivedim
  \le\frac{\tau_1^2}{\tau_2}\le d,
  \qquad
  \effectivedim\le\stablerank^2.
  \label{eq:technical-effective-dimension-ordering}
\end{equation}
Define the proof abbreviations
\begin{equation}
  \begin{aligned}
    \eta_n&:=\sqrt{\frac{\log n}{\effectivedim}}
    +\frac{\log n}{\stablerank},
    &\qquad
    \mathcal B_n&:=\tau_1+\sqrt{\tau_2\log n}+\mu\log n,\\
    \kappa_3&:=\frac{1}{\stablerank\effectivedim}
    +\frac{1}{\stablerank^3}.
  \end{aligned}
  \label{eq:proof-abbreviations}
\end{equation}
As already highlighted in Section~\ref{sec:proof-sketch}, the proof of Theorem~\ref{thm:gram-recovery} reduces the desired Frobenius norm guarantee to a scaled operator norm, and then analyzes the operator norm error (which we refer to as a denoising error).
We first present the denoising argument.

\subsection{Operator-norm denoising error}

Recall that we denoted by
\[
  S:=\frac{HXX^\top H}{\sqrt{\tau_2}} \text{ and } G:=\frac{XX^\top}{\sqrt{\tau_2}}
\]
the double-centered and original estimand respectively.
We note that $\opnorm{S}\asymp n/\sqrt{\stablerank}$ with high probability\footnote{Lemma~\ref{lem:gaussian-concentration-inputs} (in particular, Equation~\eqref{eq:input-gaussian-matrix-bound}) gives $\opnorm{S}\lesssim \frac{n\mu}{\sqrt{\tau_2}}$. 
Conversely, let $v$ be a top eigenvector of $\Sigma$. Then
$\opnorm{S}
  \ge \frac{\|HXv\|^2}{\sqrt{\tau_2}}
  \overset{d}{=}
  \frac{\mu\chi^2_{n-1}}{\sqrt{\tau_2}},$
yielding the matching lower bound with high probability.}. As $\stablerank\le d$, we use the stronger normalization
$\sqrt d/n$ for the denoising bound on 
\[
  \frac{\sqrt d}{n}\opnorm{\hat S-S}.
\]
We make this choice to ensure that we can successfully convert the operator norm bound to the desired Frobenius norm bound, as explained in Section~\ref{sec:proof-sketch}.
We now state and prove the main result of this subsection, which controls this denoising error.

\begin{proposition}
\label{prop:operator-denoising}
Assume that $t = \zeta/\sqrt{\tau_2}$ stays in a fixed compact interval.  There exists a sufficiently small constant $c_t>0$ 
depending only on this interval, such that, for all sufficiently
large $n$ satisfying 
\begin{equation}
  d \le n \log n, \qquad \frac{d}{\stablerank^2}
  \le c_t,
\label{eq:rate-bounded-by-constant}
\end{equation}
we have
\begin{equation}
  \left\{
    \E\left[
      \left(\frac{\sqrt d}{n}\opnorm{\hat S-S}\right)^2
    \right]
  \right\}^{1/2}
  \le C_t\left(
    \tau_1\sqrt{\frac{d\log n}{n\tau_2\stablerank}}
    +\frac{\sqrt d}{\stablerank}
  \right).
  \label{eq:general-operator-bound}
\end{equation}
\end{proposition}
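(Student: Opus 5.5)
We split the error by the triangle inequality as
\[
\opnorm{\hat S-S}\le \opnorm{\tilde S-S}+\opnorm{\hat S-\tilde S},
\]
where $\tilde S=\frac{1}{\beta_1}H(A+p_{\mathrm G}I_n)H$ is the oracle matrix and $\hat S=\frac{1}{\hat\beta_1}H(A+\hat pI_n)H$. Each piece is bounded in $L^2$ and then multiplied by $\sqrt d/n$.

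\textbf{Oracle error.} Off the diagonal, $A_{ij}-p_{\mathrm G}=\beta_1Z_{ij}+\sum_{k\ge2}\beta_k\He_k(Z_{ij})$. The term $p_{\mathrm G}\one\one^\top$ is killed by $H$, and the added $p_{\mathrm G}I_n$ supplies the diagonal. This gives
\[
\beta_1(\tilde S-S)=H\Big(-p_{\mathrm G}(\one\one^\top-I_n)+\text{offdiag}(A)-\beta_1\diag(G)\Big)H+\ldots
\]
Written more cleanly,
\[
\beta_1(\tilde S-S)=-\beta_1H\,\diag(G)\,H+\sum_{k\ge2}\beta_kH\Delta^{(k)}H.
\]
The diagonal term $\diag(G)$ has entries $x_i^\top x_i/\sqrt{\tau_2}=\tau_1/\sqrt{\tau_2}+O(\sqrt{\log n})$. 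Its constant part $\frac{\tau_1}{\sqrt{\tau_2}}H$ has operator norm $\tau_1/\sqrt{\tau_2}$. Since $\tau_1/\sqrt{\tau_2}\le\sqrt d$, this contributes $\frac{\sqrt d}{n}\cdot\frac{\tau_1}{\sqrt{\tau_2}}$, which is dominated by the first term of the target bound once $n\gtrsim \stablerank/\log n$. More precisely, it suffices to check that
\[
\frac{\tau_1\sqrt d}{n\sqrt{\tau_2}}\le \tau_1\sqrt{\frac{d\log n}{n\tau_2\stablerank}},
\]
which holds when $n\ge \stablerank/\log n$; this follows from $\stablerank\le d\le n\log n$. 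The fluctuation $\mathcal B_n/\sqrt{\tau_2}$ from Lemma~\ref{lem:gaussian-concentration-inputs} is handled the same way.

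The nonlinear part is the crux. I would invoke Propositions~\ref{prop:quadratic-centered}, \ref{prop:krm-fixed-degree} and~\ref{prop:tail}, expecting bounds of the following form:
\begin{itemize}
\item[] $\opnorm{H\Delta^{(2)}H}\lesssim n/\stablerank+\sqrt n\,\mathcal B_n/\sqrt{\tau_2}$ (times polylog factors coming from $\eta_n$);
\item[] $\opnorm{H\Delta^{(3)}H}\lesssim n\sqrt{\kappa_3}+\ldots$;
\item[] a tail bound of order $n/\stablerank$ as well.
\end{itemize}
Multiplying by $\sqrt d/n$ gives $\sqrt d/\stablerank$. For the cubic term, note that $\sqrt{\kappa_3}\le 2/\stablerank$ by \eqref{eq:technical-effective-dimension-ordering}. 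The $\sqrt n\,\mathcal B_n$-type fluctuation terms produce
\[
\frac{\sqrt d}{\sqrt n}\cdot\frac{\tau_1+\sqrt{\tau_2\log n}}{\sqrt{\tau_2}},
\]
which fits the first term after using $\stablerank\ge1$ together with the $\log n$ factor. The exact bookkeeping is where I expect friction: matching $\eta_n$ and $\log n$ powers so that everything lands in the two claimed terms.

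\textbf{Plug-in error.} Write
\[
\hat S-\tilde S=\Big(\tfrac1{\hat\beta_1}-\tfrac1{\beta_1}\Big)H(A+p_{\mathrm G}I)H+\tfrac{1}{\hat\beta_1}(\hat p-p_{\mathrm G})H.
\]
Using $\opnorm{HAH}\lesssim n/\sqrt{\stablerank}+\ldots$, this is bounded by $C_t|\hat p-p_{\mathrm G}|\,n$. Then $|\hat p-p_{\mathrm G}|\le|\hat p-p|+|p-p_{\mathrm G}|$.

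The bias $|p-p_{\mathrm G}|$ comes from the non-Gaussianity of $Z_{ij}$. Conditioning on $x_i$, $Z_{ij}$ is exactly $N(0,x_i^\top\Sigma x_i/\tau_2)$. Hence
\[
p=\E\,\bar\Phi\big(t/\sqrt{x_i^\top\Sigma x_i/\tau_2}\big).
\]
The variance ratio has mean $1$ and fluctuation $\sqrt{\tau_4}/\tau_2=1/\sqrt{\effectivedim}$. A second-order Taylor expansion then gives $|p-p_{\mathrm G}|\lesssim 1/\effectivedim\le1/\stablerank$. The first-order term vanishes in expectation, which is the point of the expansion.

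For the variance, $\hat p_0$ is a U-statistic, so $\Var(\hat p_0)\lesssim 1/n+\Var(\text{conditional degree})$. This gives a contribution $\frac{\sqrt d}{n}\cdot n\cdot(n^{-1/2}+\stablerank^{-1})$, which fits both terms. Lipschitzness of $\bar\Phi^{-1}$ and of $\varphi$ on the compact interval transfers these bounds to $\hat t$ and $\hat\beta_1$. The clipping of $\hat p$ handles the exceptional events: they have tiny probability, while $\opnorm{\hat S}\lesssim n^{C}$ deterministically.

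Finally, the hypothesis $d/\stablerank^2\le c_t$ is used to keep $\hat\beta_1$ bounded away from zero on the good event; the small exceptional event is absorbed through the $L^2$ bound.
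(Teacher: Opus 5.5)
Your route is the paper's. You use the same split into oracle and plug-in errors and the same decomposition $\tilde S-S=-HD_XH+\frac{\beta_2}{\beta_1}H\Delta^{(2)}H+\frac{\beta_3}{\beta_1}H\Delta^{(3)}H+\frac1{\beta_1}HT_{\ge4}H$. Write the tail as the exact remainder $q_{\ge4}$, not as a Hermite series evaluated at the non-Gaussian $Z_{ij}$. You also use the same three propositions and the same good-event and clipping treatment of $\hat p$.

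However, the bookkeeping you sketch for the nonlinear terms fails at a concrete point. With your anticipated quadratic bound $\sqrt n\,\mathcal B_n/\sqrt{\tau_2}$, the scaled contribution is at least $\sqrt{d/n}\,\tau_1/\sqrt{\tau_2}$. This exceeds the first target term $\tau_1\sqrt{d\log n/(n\tau_2\stablerank)}$ by the factor $\sqrt{\stablerank/\log n}$. Appealing to $\stablerank\ge1$ goes in the wrong direction: for $\Sigma=I_d$ you would get $d/\sqrt n$ instead of $\sqrt{d\log n/n}$, i.e.\ the wrong rate. What makes the proposition true is the extra factor $\mu/\sqrt{\tau_2}=\stablerank^{-1/2}$ in Proposition~\ref{prop:quadratic-centered}, whose bound is $\mathcal B_n\sqrt{n\log n/(\tau_2\stablerank)}$. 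After double-centering removes the Hoeffding projections, the canonical kernel's correlation matrix is $\frac{2}{\tau_2^2}Y(\Sigma\otimes\Sigma)Y^\top$, and its column term carries $\mu^2/\tau_2^2$. Even with this factor, the $\sqrt{\tau_2\log n}+\mu\log n$ part of $\mathcal B_n$ is absorbed by the first target term only when $\stablerank\ge\log n$. When $\stablerank<\log n$ it must be charged to $\sqrt d/\stablerank$, using $\stablerank<\log n\le n/(\log n)^2$.

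The terms you elide with ``$\ldots$'' in the cubic and tail bounds are $\sqrt{n(1+\eta_n^3)\log n}$; the tail bound is $n/\effectivedim$ plus this, not just $n/\stablerank$. After scaling, the $\sqrt{d\log n/n}$ part is absorbed via $\tau_1\ge\sqrt{\tau_2\stablerank}$. The $\eta_n^3$ part needs a case split: if $\effectivedim<\log n$ (resp.\ $\stablerank<\log n$), it is dominated by $\sqrt d/\stablerank$, with ratio at most $(\log n)^{3/2}/\sqrt n$ (resp.\ $(\log n)^2/\sqrt n$).

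On the plug-in side, the exceptional event must have exponentially small probability. The worst-case bound is only $\opnorm{\hat S}\lesssim n^3$, from $\hat\beta_1\ge c\min\{\hat p,1-\hat p\}\ge cn^{-2}$. Chebyshev with $\Var(\hat p_0)\le 2/n$ therefore does not suffice; you need McDiarmid with bounded differences $2/n$. The hypothesis $d/\stablerank^2\le c_t$ enters precisely to make the bias $C_t/\effectivedim\le C_t/\stablerank$ smaller than half the margin between $p_{\mathrm G}$ and $\{0,1\}$, so that this tail bound applies. Also, the $O(1/\effectivedim)$ variance of the conditional edge probability enters $\Var(\hat p_0)$ only with a factor $1/n$. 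An additive $1/\effectivedim$, as your variance formula reads literally, would produce $\sqrt{d/\effectivedim}$, which is not within the target.
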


\begin{proof}[Proof of Proposition~\ref{prop:operator-denoising}]

Recall that we defined the idealized estimator as 
\[
  \tilde S=\frac{1}{\beta_1}H(A+p_{\mathrm G}I_n)H,\text{ where } \beta_1=\varphi(t).
\]
By Minkowski's inequality with respect to the $L_2$ norm, we have
\[
\E\left[\left(\opnorm{\hat S - S}\right)^2\right]^{1/2} \leq \underbrace{\E\left[\left(\opnorm{\tilde S - S}\right)^2\right]^{1/2}}_{\text{idealized estimation error}} + \underbrace{\E\left[\left(\opnorm{\tilde S - \hat S}\right)^2\right]^{1/2}}_{\text{edge density estimation error}}.
\]
We first control the idealized estimation error $\E\left[\left(\opnorm{\tilde S - S}\right)^2\right]^{1/2}$.

\paragraph{Hermite decomposition.}
The first step is to express the idealized estimator $\tilde S$ in terms of an entry-wise Hermite polynomial decomposition.
We define the \emph{whole-tail residual function} 
\[
  q_{\ge4}(u)
  :=
  \mathbbm{1}\{u\ge t\}-p_{\mathrm G}-\sum_{k=1}^3\beta_k\He_k(u)
\]
and the collection of random variables \(Z_{ij} :=\ip{x_i}{x_j}/\sqrt{\tau_2}\) for each \(i\ne j\).
Rearranging the above display and recalling the definition of the adjacency matrix $A$, we note that
\[
  A_{ij} = p_{\mathrm G} + \sum_{k=1}^3\beta_k\He_k(Z_{ij})
  +q_{\ge4}(Z_{ij}) \text{ for all } i \neq j.
\]
We define $T_{\ge 4}$ to be the matrix with entries given by $q_{\geq 4}(Z_{ij}) \mathbbm{1}\{i \ne j\}$.
Similarly, for $k \in \{1,2,3\}$, we define $\Delta^{(k)}$ to be the matrix with entries given by $\He_k(Z_{ij})\mathbbm{1}\{i \ne j\}$.
Note that the diagonal entries of all of these matrices are zero.
We then have
\begin{align*}
    \tilde S = \frac{1}{\beta_1} H(A + p_G I_n) H &= \frac{1}{\beta_1} H(p_G \one \one^\top + \sum_{k =1}^3 \beta_k \Delta^{(k)} + T_{\ge 4})H\\
    &=\sum_{k=1}^3\frac{\beta_k}{\beta_1} H\Delta^{(k)}H+\frac{1}{\beta_1}HT_{\ge4}H,
\end{align*}
where we used the fact that $H \one = 0$.
Subtracting $S$ from both sides of the above display and noting that $\He_1(u) = u$, we then have
\begin{align}\label{eq:conditional-hermite-decomposition}
\tilde S - S = -HD_XH +\frac{\beta_2}{\beta_1}H\Delta^{(2)}H +\frac{\beta_3}{\beta_1}H\Delta^{(3)} H +\frac1{\beta_1}HT_{\ge4}H,
\end{align}
where we defined the diagonal matrix 
\[
  D_X=\diag\left(\frac{\|x_1\|^2}{\sqrt{\tau_2}},\ldots,
  \frac{\|x_n\|^2}{\sqrt{\tau_2}}\right).
\]
By the triangle inequality, it suffices to control the operator norm of the four matrices in the right hand side of Equation~\eqref{eq:conditional-hermite-decomposition}.
We do this next.

\paragraph{Controlling the diagonal and nonlinear error matrices.}
We begin by bounding the operator norm of the diagonal error term $H D_X H$.
Because $\opnorm{H} \leq 1$, we have
\[
  \opnorm{HD_XH}\le \opnorm{D_X}
  =
  \max_i\frac{\|x_i\|^2}{\sqrt{\tau_2}}.
\]
Then, Lemma~\ref{lem:gaussian-concentration-inputs} (in particular, Equation~\eqref{eq:input-row-fourth-expectation}) directly yields
\begin{equation}
  \left\{
    \E\left[
      \left(\frac{\sqrt d}{n}\opnorm{HD_XH}\right)^2
    \right]
  \right\}^{1/2}
  \le C\sqrt{\frac d{\tau_2}}\frac{(\tau_1 + \sqrt{\tau_2 \log n} + \mu \log n)}{n} =: C\sqrt{\frac d{\tau_2}}\frac{\mathcal B_n}{n},
\label{eq:linear-diagonal-error}
\end{equation}
where we recall the definition of the shorthand notation $\mathcal B_n$ from Equation~\eqref{eq:proof-abbreviations}.

We next bound the quadratic, cubic, and whole-residual terms.  
For the quadratic term, Proposition~\ref{prop:quadratic-centered} directly gives
\begin{equation}
  \left(\E\opnorm{H\Delta^{(2)}H}^2\right)^{1/2}
  \le C\left(
    \frac{n}{\stablerank}
    +\mathcal B_n\sqrt{\frac{n\log n}{\tau_2 \stablerank}}
    +\eta_n
  \right),
  \label{eq:conditional-quadratic-bound}
\end{equation}
where we recall the definition of the shorthand notation $\eta_n$ from Equation~\eqref{eq:proof-abbreviations}.
For the cubic term, Proposition~\ref{prop:krm-fixed-degree} upper bounds $\opnorm{\Delta^{(3)}}$.
Combining this with the fact that \(\opnorm{H}\le1\) yields
\begin{equation}
  \left(\E\opnorm{H\Delta^{(3)}H}^2\right)^{1/2}
  \le C\left(
    \sqrt{n(1+\eta_n^3)\log n}+n\sqrt{\kappa_3}
  \right).
  \label{eq:conditional-cubic-bound}
\end{equation}
Finally, Proposition~\ref{prop:tail} upper bounds the operator norm of the whole tail-residual, i.e. $\opnorm{T_{\ge 4}}$.
Using, again, that $\opnorm{H} \leq 1$ yields
\begin{equation}
  \left(\E\opnorm{HT_{\ge4}H}^2\right)^{1/2}
  \le C_t\left(
    \sqrt{n(1+\eta_n^3)\log n}+\frac n\effectivedim
  \right).
  \label{eq:conditional-tail-bound}
\end{equation}
Proposition~\ref{prop:quadratic-centered}, Proposition~\ref{prop:krm-fixed-degree} and Proposition~\ref{prop:tail} comprise the key technical components of the proof, and are formally stated and proved in Sections~\ref{sec:quadratic-hermite},~\ref{sec:cubic-hermite} and~\ref{sec:tail-gap} respectively.

\paragraph{Error assembly and rate simplification.}
Because we have assumed a fixed compact range on the normalized threshold parameter $t$, the coefficient \(\beta_1=\varphi(t)\) is bounded away from zero.
Consequently, the absolute value of the ratios \(|\beta_2/\beta_1|\), \(|\beta_3/\beta_1|\), and
\(|1/\beta_1|\) are bounded by constants that depend only on this range; we take $C_t$ to be a suitable constant that depends on $t$ and exceeds the maximum of these quantities.

We now assemble the error terms in the Hermite decomposition to establish an upper bound on the idealized estimation error $\frac{\sqrt{d}}{n} \opnorm{\tilde S - S}$.
In particular, applying Minkowski's inequality in $L^2$ to Equation~\eqref{eq:conditional-hermite-decomposition} together with the bounds~\eqref{eq:linear-diagonal-error},~\eqref{eq:conditional-quadratic-bound},~\eqref{eq:conditional-cubic-bound} and~\eqref{eq:conditional-tail-bound} yields
\begin{equation}
  \begin{aligned}
    \left\{
      \E\left[
        \left(\frac{\sqrt d}{n}\opnorm{\tilde S-S}\right)^2
      \right]
    \right\}^{1/2}
    \le C_t\Bigg(&
      \sqrt{\frac d{\tau_2}}\frac{\mathcal B_n}{n}
      +\mathcal B_n\sqrt{\frac{d\log n}{n\tau_2\stablerank}}\\
      &+\frac{\sqrt d}{\stablerank}
      +\sqrt{\frac{d(1+\eta_n^3)\log n}{n}}
      +\sqrt{d\kappa_3}
      +\frac{\sqrt d}{\effectivedim}
    \Bigg).
  \end{aligned}
  \label{eq:oracle-scaled-bound}
\end{equation}
Observe that we omitted the term corresponding to $\sqrt{d} \eta_n/n$ in Equation~\eqref{eq:oracle-scaled-bound}: the relations between effective dimensions in Equation~\eqref{eq:technical-effective-dimension-ordering} directly imply that
\(\sqrt d\,\eta_n/n\le
C\sqrt{d/\tau_2}\,\mathcal B_n/n\), meaning that this term is dominated.

We next show that the auxiliary terms in \eqref{eq:oracle-scaled-bound} are
absorbed by the two terms in the statement of Proposition~\ref{prop:operator-denoising} (i.e. the two terms in the RHS of Equation~\eqref{eq:general-operator-bound}).  
Recall that these two terms are $\left(\tau_1\sqrt{\frac{d\log n}{n\tau_2\stablerank}},\frac{\sqrt d}{\stablerank}\right)$.
We do these for the auxiliary terms one by one below.
We begin with the term $\sqrt{\frac{d(1+\eta_n^3)\log n}{n}}$. Since $\tau_2=\tr(\Sigma^2)\le\mu\tau_1 \text{ and } \mu=\sqrt{\frac{\tau_2}{\stablerank}}$, we have \(\tau_1\ge\sqrt{\tau_2 \stablerank}\). This gives us the inequality
    \begin{equation}
        \sqrt{\frac{d\log n}{n}}
        \le \tau_1\sqrt{\frac{d\log n}{n\tau_2 \stablerank}}.
        \label{eq:base-fluctuation-absorption}
    \end{equation}
    Next, we expand \(1+\eta_n^3\) and use the identity 
    \((a+b)^{3/2}\le C(a^{3/2}+b^{3/2})\) to obtain
    \begin{equation}
        \sqrt{\frac{d(1+\eta_n^3)\log n}{n}}
        \le C\sqrt{\frac{d\log n}{n}}
        \left\{1+\left(\frac{\log n}{\effectivedim}\right)^{3/4}
        +\left(\frac{\log n}{\stablerank}\right)^{3/2}\right\}.
        \label{eq:common-fluctuation-expansion}
    \end{equation}
The first term on the right-hand side of Equation~\eqref{eq:common-fluctuation-expansion} is absorbed by $\tau_1\sqrt{\frac{d\log n}{n\tau_2 \stablerank}}$ by Equation~\eqref{eq:base-fluctuation-absorption}. If
$\effectivedim\ge\log n$, the second term is absorbed in the same way. If
$\effectivedim<\log n$, then, using the fact that $\stablerank\le\effectivedim$ (Equation~\eqref{eq:technical-effective-dimension-ordering}), we have
\[
  \frac{
    \sqrt{d\log n/n}\,
    (\log n/\effectivedim)^{3/4}
  }{\sqrt d/\stablerank}
  =
  \frac{\stablerank(\log n)^{5/4}}
       {\sqrt n\,\effectivedim^{3/4}}
  \le
  \frac{(\log n)^{3/2}}{\sqrt n}
  =o(1) \text{ for large enough $n$.}
\]
Likewise, the third term is absorbed by
\eqref{eq:base-fluctuation-absorption} when
$\stablerank\ge\log n$. If $\stablerank<\log n$, then we use the fact that $\stablerank \geq 1$ (Equation~\eqref{eq:technical-effective-dimension-ordering}) to obtain
\[
  \frac{
    \sqrt{d\log n/n}\,
    (\log n/\stablerank)^{3/2}
  }{\sqrt d/\stablerank}
  =
  \frac{(\log n)^2}{\sqrt{n\stablerank}}
  \le
  \frac{(\log n)^2}{\sqrt n}
  =o(1).
\]
Consequently, we have
\begin{equation*}
    \sqrt{\frac{d(1+\eta_n^3)\log n}{n}}
    \le C\left\{\tau_1\sqrt{\frac{d\log n}{n\tau_2\stablerank}}+\frac{\sqrt d}{\stablerank}\right\}.
\end{equation*}
Next, we consider the terms $\mathcal B_n\sqrt{\frac{d\log n}{n\tau_2 \stablerank}}$ and $\sqrt{d \kappa_3} + \frac{\sqrt{d}}{\effectivedim}$.
In particular, we have
\[
  \mathcal B_n\sqrt{\frac{d\log n}{n\tau_2 \stablerank}}
  \le C\left\{
    \tau_1\sqrt{\frac{d\log n}{n\tau_2 \stablerank}}
    +\frac{\sqrt d}{\stablerank}
  \right\}.
\]
This follows by considering two cases for the stable rank $\stablerank$. If $\stablerank \geq \log n$, we have $\tau_1^2/\tau_2 \geq \stablerank \geq \log n$, implying that $\mu \log n \leq \sqrt{\tau_2 \log n} \leq \tau_1$ and therefore $\mathcal B_n \leq 3 \tau_1$ (corresponding to the first term in the RHS of the above display).
On the other hand, if $\stablerank < \log n$, we have
\begin{align*}
\sqrt{\tau_2 \log n} \cdot \sqrt{\frac{d \log n}{n \tau_2 \stablerank}} = \sqrt{\frac{d}{n \stablerank}} \cdot \log n &\le \frac{\sqrt{d}}{\stablerank} \text{ and } \\
\mu \log n \cdot \sqrt{\frac{d \log n}{n \tau_2 \stablerank}} = \frac{\sqrt{d}}{\stablerank} \cdot \frac{(\log n)^{3/2}}{\sqrt{n}} &\le \frac{\sqrt{d}}{\stablerank},
\end{align*}
where the last steps in the above displays use $\stablerank < \log n \leq n/(\log n)^2$ and $(\log n)^{3/2}/n \leq 1$ (both of which hold for large enough $n$).
This shows the desired bound on the term $\mathcal B_n\sqrt{\frac{d\log n}{n\tau_2 \stablerank}}$.
Next, we have
\[
  \sqrt{d\kappa_3}+\frac{\sqrt d}{\effectivedim} \le 3\frac{\sqrt d}{\stablerank}.
\]
This follows because we have $\effectivedim \ge \stablerank$ from Equation~\eqref{eq:technical-effective-dimension-ordering}, which further implies that \(\kappa_3\le2/\stablerank^2\).  Finally, we show that the term $\sqrt{\frac d{\tau_2}}\frac{\mathcal B_n}{n}$ is absorbed as well. By Condition~\eqref{eq:rate-bounded-by-constant}   
and that $\stablerank\le d$, we have
\[
  \frac{\sqrt{d/\tau_2}\,\tau_1/n}
       {\tau_1\sqrt{d\log n/(n\tau_2 \stablerank)}}
  =\sqrt{\frac{\stablerank}{n\log n}}
  \le\sqrt{\frac d{n\log n}}
  \le 1.
\]
Moreover, we have
\[
\sqrt{\frac{d}{\tau_2}} \cdot \frac{\sqrt{\tau_2 \log n}}{n} = \frac{\sqrt{d \log n}}{n} \leq \sqrt{\frac{d \log n}{n}} \leq \tau_1 \sqrt{\frac{d \log n}{n \tau_2 \stablerank}},
\]
where the last inequality follows from Equation~\eqref{eq:base-fluctuation-absorption}, and
\[
\sqrt{\frac{d}{\tau_2}} \cdot \frac{\mu \log n}{n} \leq \frac{\sqrt{d} \log n}{n} \leq \sqrt{\frac{d \log n}{n}} \leq \tau_1 \sqrt{\frac{d \log n}{n \tau_2 \stablerank}},
\]
where the second inequality uses $\log n \leq n$.
Putting these three displays together shows that
\[
\sqrt{\frac d{\tau_2}}\frac{\mathcal B_n}{n} \leq C \tau_1 \sqrt{\frac{d \log n}{n \tau_2 \stablerank}}.
\]
Therefore, we have completed our argument to show that all of the auxiliary terms are absorbed.
Ultimately, this gives us the following bound on the idealized estimation error:
\begin{equation}
  \left\{
    \E\left[
      \left(\frac{\sqrt d}{n}\opnorm{\tilde S-S}\right)^2
    \right]
  \right\}^{1/2}
  \le C_t\left(
    \tau_1\sqrt{\frac{d\log n}{n\tau_2 \stablerank}}
    +\frac{\sqrt d}{\stablerank}
  \right).
  \label{eq:oracle-simplified-bound}
\end{equation}

\paragraph{Incorporating the edge density estimation error.}

The last step in proving Proposition~\ref{prop:operator-denoising} is to upper bound the estimation error arising from estimating the edge density, given by $\opnorm{\tilde S - \hat S}$.
First, note that we can write this error matrix as
\begin{equation*}
  \hat S-\tilde S
  =
  \left(\frac{\beta_1}{\hat\beta_1}-1\right)\tilde S
  +\frac{\delta_{\mathrm G}}{\hat\beta_1}H,
\end{equation*}
where we define $\delta_{\mathrm G} := \hat p - p_{\mathrm G}$.
Recall that we have assumed that the normalized threshold $t$ is contained in a fixed compact interval.
Let $K$ denote this interval, and choose
\(\varepsilon_K>0\) such that
\[
  \bar\Phi(s)\in[2\varepsilon_K,1-2\varepsilon_K] \;\forall s\in K.
\]
Then, we define a ``good event" on edge density estimation as \(\mathcal E_n:=\{|\delta_{\mathrm G}|\le\varepsilon_K\}\).
Note that on this event, we have $\hat p \in [\varepsilon_K,1 - \varepsilon_K]$.
(Indeed, since $p_{\mathrm G}=\bar\Phi(t)$ and $t\in K$, our choice of
$\varepsilon_K$ ensures that
$p_{\mathrm G}\in[2\varepsilon_K,1-2\varepsilon_K]$. On $\mathcal E_n$,
we have $|\hat p-p_{\mathrm G}|\le\varepsilon_K$, and hence
$\hat p
  \in
  [p_{\mathrm G}-\varepsilon_K,p_{\mathrm G}+\varepsilon_K]
  \subseteq
  [\varepsilon_K,1-\varepsilon_K].$)
We apply Lemma~\ref{lem:edge-density-concentration} with the choice $s = \varepsilon_K/2$.
Equations~\eqref{eq:technical-effective-dimension-ordering} and~\eqref{eq:rate-bounded-by-constant} give
\[
  \frac1{\effectivedim}
  \le \frac1{\stablerank}
  \le \frac d{\stablerank^2}
  \le c_t.
\]
Choosing $c_t$ sufficiently small therefore ensures that
$C_t/\effectivedim\le\varepsilon_K/2$.  Thus
Lemma~\ref{lem:edge-density-concentration} gives
\(\Pp(\mathcal E_n^c)\le 2e^{-c_Kn}\) for all sufficiently large \(n\),
where \(c_K>0\) depends only on \(K\).
We will now bound the quantity $|\beta_1/\hat \beta_1 - 1|$ under the good event $\mathcal E_n$.
We have 
\[
|\beta_1 - \hat \beta_1| = |\varphi(\bar{\Phi}^{-1}(p_G)) - \varphi(\bar{\Phi}^{-1}(\hat p))|.
\]
Now, the \(b(u)=\varphi(\bar\Phi^{-1}(u))\) is continuously differentiable and bounded
away from zero on the interval $[\epsilon_K, 1 - \epsilon_K]$.  The mean-value theorem therefore gives
\[
  \left|\frac{\beta_1}{\hat\beta_1}-1\right|
  \le C_t|\delta_{\mathrm G}|.
\]
Since \(\beta_1\) is bounded away from zero and
we have the deterministic bound \(\opnorm{A+p_{\mathrm G}I_n}\le n\), we have the deterministic bound
\(\opnorm{\tilde S}\le C_tn\).  Therefore, on the good event \(\mathcal E_n\), we have
\[
  \frac{\sqrt d}{n}\opnorm{\hat S-\tilde S}
  \le C_t\sqrt d\,|\delta_{\mathrm G}|.
\]
Lemma~\ref{lem:edge-density-concentration} consequently gives
\begin{equation}\label{eq:edge-density-error-goodevent}
  \left\{
    \E\left[
      \left(\frac{\sqrt d}{n}
      \opnorm{\hat S-\tilde S}\right)^2
      \mathbbm{1}_{\mathcal E_n}
    \right]
  \right\}^{1/2}
  \le
  C_t\left(\sqrt{\frac dn}+\frac{\sqrt d}{\effectivedim}\right).
\end{equation}
It remains to bound the complementary quantity $\E\left[\left(\frac{\sqrt d}{n}\opnorm{\hat S-\tilde S}\right)^2\mathbbm{1}_{\mathcal E_n^c}\right]$. 
For this, we provide a worst-case bound on $\opnorm{\hat S}$.
Recall that we applied a clipping operation to the edge density estimate $\hat p$ which ensures that $n^{-2} \leq \hat p \leq 1 - n^{-2}$.
Moreover, the function \(b(u)\) defined above is symmetric about
\(u=1/2\), and we claim that
\(b(u)\ge c\min\{u,1-u\}\).
To see this, suppose first that \(u\le1/2\) and set
\(z=\bar\Phi^{-1}(u)\ge0\).  
Note that $b(u) = \varphi(z)$.
If \(z\ge1\), then we have
\[
  u=\int_z^\infty\varphi(x)\,dx
  \le\frac1z\int_z^\infty x\varphi(x)\,dx
  =\frac{\varphi(z)}z
  \le\varphi(z),
\]
where $z \ge 1$ was used in the last inequality.
If \(0\le z\le1\), then \(u\le1/2\le C\varphi(z)\).  Thus
\(b(u)=\varphi(z)\ge cu\) for \(u\le1/2\), and symmetry with respect to $(u \le 1/2, u > 1/2)$ gives the claim.

From the above reasoning, we have
\(\hat\beta_1 = b(\hat p) \ge cn^{-2}\).  Noting again that
\(\opnorm{A+\hat pI_n}\le n\) gives us the worst case bound
\[
  \opnorm{\hat S}\le Cn^3,
\]
Recalling that we have the worst case bound \(\opnorm{\tilde S}\le C_tn\), we use Minkowski's inequality to obtain
\begin{equation}\label{eq:edge-density-error-badevent}
  \left\{
    \E\left[
      \left(\frac{\sqrt d}{n}
      \opnorm{\hat S-\tilde S}\right)^2
      \mathbbm{1}_{\mathcal E_n^c}
    \right]
  \right\}^{1/2}
  \le C\sqrt d\,n^2e^{-c_Kn/2}.
\end{equation}
Finally, the contribution from the bad event (the RHS of Equation~\eqref{eq:edge-density-error-badevent}) is negligible compared to the contribution from the good event (the RHS of Equation~\eqref{eq:edge-density-error-goodevent}).
This is because $n^2 e^{-c_K n/2} = o(1/\sqrt{n})$ for large enough $n$.
Ultimately, we obtain the following bound for the edge density estimation error:
\begin{align}\label{eq:edge-density-error}
      \left\{
    \E\left[
      \left(\frac{\sqrt d}{n}
      \opnorm{\hat S-\tilde S}\right)^2
    \right]
  \right\}^{1/2}
  \le
  C_t\left(\sqrt{\frac dn}+\frac{\sqrt d}{\effectivedim}\right).
\end{align}
Combining Equations~\eqref{eq:oracle-simplified-bound} and~\eqref{eq:edge-density-error} through Minkowski's inequality on $L_2$ completes the proof of the proposition.
\end{proof}

\subsection{Frobenius recovery from spectral truncation}

In this section, we convert our scaled operator norm error bound from Proposition~\ref{prop:operator-denoising} into a Frobenius norm squared error bound between the truncated estimator $\hat S_d$ and the centered estimand $S$.

\begin{proposition}
\label{prop:centered-frobenius}
Under the conditions of Proposition~\ref{prop:operator-denoising}, for all
sufficiently large \(n\), we have
\begin{equation}
  \E\left[\frac1{n^2}\fnorm{\hat S_d-S}^2\right]
  \le C_t\left(
    \frac{d\tau_1^2\log n}{n\tau_2 \stablerank}
    +\frac{d}{\stablerank^2}
  \right).
  \label{eq:centered-frobenius-recovery}
\end{equation}
\end{proposition}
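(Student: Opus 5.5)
The plan is to reduce the Frobenius bound to the operator-norm bound of Proposition~\ref{prop:operator-denoising} via the rank-truncation inequality alluded to in Section~\ref{sec:proof-sketch} (Lemma~\ref{lem:op-to-frobenius-truncation}). The key structural facts are that $S=HXX^\top H/\sqrt{\tau_2}$ is positive semidefinite of rank at most $d$, and that $\hat S_d=\Pi_d^+(\hat S)$ has rank at most $d$. Hence $\hat S_d-S$ has rank at most $2d$, so
\[
\fnorm{\hat S_d-S}\le\sqrt{2d}\,\opnorm{\hat S_d-S}.
\]

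I will bound $\opnorm{\hat S_d-S}\le\opnorm{\hat S_d-\hat S}+\opnorm{\hat S-S}$. For the first term, the discarded part of $\hat S$ consists of eigenvalues that are either not among the top $d$ or are negative. By Weyl's inequality, since $S\succeq0$ has $\lambda_{d+1}(S)=0$, we get $\lambda_{d+1}(\hat S)\le\opnorm{\hat S-S}$. Similarly, every negative eigenvalue of $\hat S$ satisfies $|\lambda_k(\hat S)|\le\opnorm{\hat S-S}$, because $\lambda_n(S)\ge0$. Therefore
\[
\opnorm{\hat S_d-\hat S}\le\opnorm{\hat S-S},
\]
which gives $\fnorm{\hat S_d-S}\le 2\sqrt{2d}\,\opnorm{\hat S-S}$. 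This is exactly Lemma~\ref{lem:op-to-frobenius-truncation}, which I would cite directly.

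Next I square, divide by $n^2$, and take expectations:
\[
\E\Big[\tfrac1{n^2}\fnorm{\hat S_d-S}^2\Big]\le 8\,\E\Big[\Big(\tfrac{\sqrt d}{n}\opnorm{\hat S-S}\Big)^2\Big].
\]
I then apply \eqref{eq:general-operator-bound}. Squaring its right-hand side and using $(a+b)^2\le2a^2+2b^2$ yields
\[
C_t\Big(\tfrac{d\tau_1^2\log n}{n\tau_2\stablerank}+\tfrac{d}{\stablerank^2}\Big),
\]
which is \eqref{eq:centered-frobenius-recovery}. The hypotheses \eqref{eq:rate-bounded-by-constant} are inherited verbatim, since the proposition is stated under the conditions of Proposition~\ref{prop:operator-denoising}.

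There is essentially no obstacle here, since all the analytic difficulty was absorbed by Proposition~\ref{prop:operator-denoising}. The only point requiring care is the eigenvalue bookkeeping in the truncation step. One must handle the case where $\hat S$ has fewer than $d$ positive eigenvalues, so that $\Pi_d^+$ keeps fewer than $d$ of them. One must also check that the positive-part operation does not break the rank-$2d$ argument. Both are covered by the Weyl argument above, because $S$ is positive semidefinite.
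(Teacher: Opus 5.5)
Your proposal is correct and matches the paper's proof. The paper likewise applies Lemma~\ref{lem:op-to-frobenius-truncation} with $B=\hat S$ and $M=S$ to get $\frac{1}{n^2}\fnorm{\hat S_d-S}^2\le 8\frac{d}{n^2}\opnorm{\hat S-S}^2$, then takes expectations, invokes Proposition~\ref{prop:operator-denoising}, and uses $(a+b)^2\le 2(a^2+b^2)$. Your Weyl bound on the negative eigenvalues via $\lambda_n(S)\ge 0$ is equivalent to the paper's Rayleigh-quotient step $u^\top E u\le -a$ in the proof of that lemma.
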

Note that the order of the RHS in Proposition~\ref{prop:centered-frobenius} is identical to the RHS of Proposition~\ref{prop:operator-denoising} when squared, up to a constant that depends only on the normalized threshold $t$.
\begin{proof}[Proof of Proposition~\ref{prop:centered-frobenius}]

Recall that we defined the truncated estimator as $\hat S_d := \Pi_d^+(\hat S)$.
We state the following lemma and prove it at the end of this section.

\begin{lemma}
\label{lem:op-to-frobenius-truncation}
Let \(M\) be a symmetric positive semidefinite matrix with rank at most \(d\).
Let \(B\) be a symmetric matrix, and let \(B_d=\Pi_d^+(B)\) be obtained by keeping at
most the \(d\) largest positive eigenvalues of \(B\).  Then, we have
\[
  \fnorm{B_d-M}
  \le
  2\sqrt{2d}\,\opnorm{B-M}.
\]
\end{lemma}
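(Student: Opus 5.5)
Write $E:=B-M$ and $\varepsilon:=\opnorm{E}$. The plan is to split $\fnorm{B_d-M}$ through the triangle inequality at the unrestricted rank-$d$ truncation $B_{[d]}$, which keeps the $d$ largest eigenvalues of $B$ whether or not they are positive, and then to control each piece with Weyl's inequality. Both $B_d$ and $M$ have rank at most $d$, so $B_d-M$ has rank at most $2d$. This gives $\fnorm{B_d-M}\le\sqrt{2d}\,\opnorm{B_d-M}$, and it therefore suffices to show $\opnorm{B_d-M}\le 2\varepsilon$.

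Let $\lambda_1\ge\cdots\ge\lambda_n$ be the eigenvalues of $B$. First, $\opnorm{B-B_d}=\max\{\lambda_{d+1}^+,\ \max_k\lambda_k^-\}$, where $(\cdot)^-$ denotes the negative part; the first quantity is the largest discarded positive eigenvalue and the second is the size of the most negative eigenvalue. Since $M\succeq0$ has rank at most $d$, its eigenvalues satisfy $\lambda_{d+1}(M)=0$ and $\lambda_n(M)\ge0$. Weyl's inequality then gives $\lambda_{d+1}\le\lambda_{d+1}(M)+\varepsilon=\varepsilon$, and also $\lambda_n\ge\lambda_n(M)-\varepsilon\ge-\varepsilon$. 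Together these show $\opnorm{B-B_d}\le\varepsilon$.

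Combining, $\opnorm{B_d-M}\le\opnorm{B_d-B}+\opnorm{B-M}\le2\varepsilon$. Hence $\fnorm{B_d-M}\le\sqrt{2d}\cdot2\varepsilon=2\sqrt{2d}\,\opnorm{B-M}$, which is exactly the claimed constant.

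The only step needing care is the claim $\opnorm{B-B_d}\le\varepsilon$. Discarding the negative eigenvalues has to be accounted for, and this is precisely where positive semidefiniteness of $M$ enters, through the bound $\lambda_n\ge-\varepsilon$. The remaining steps are routine once the rank-$2d$ observation is made.
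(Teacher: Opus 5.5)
Your proof is correct and follows the paper's argument essentially step for step. You bound $\opnorm{B-B_d}\le\opnorm{B-M}$ using Weyl's inequality at index $d+1$ for the discarded positive eigenvalues and a lower bound on the negative eigenvalues from $M\succeq0$; your $\lambda_n(B)\ge\lambda_n(M)-\varepsilon$ is equivalent to the paper's Rayleigh-quotient step $u^\top Eu\le -a$. You then use the triangle inequality and the rank-$2d$ bound $\fnorm{\cdot}\le\sqrt{2d}\,\opnorm{\cdot}$, exactly as the paper does. The announced detour through the unrestricted truncation $B_{[d]}$ is never used and can simply be deleted.
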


We apply Lemma~\ref{lem:op-to-frobenius-truncation} with $B = \hat S$ and $M = S$. Because $S = H XX^T H/\sqrt{\tau_2}$, i.e. it is a double-centered normalized Gram matrix, it is positive semidefinite and its rank is at most $d$ --- therefore, it satisfies the conditions of Lemma~\ref{lem:op-to-frobenius-truncation}.
This gives us
\[
  \frac1{n^2}\fnorm{\hat S_d-S}^2
  \le
  8\frac d{n^2}\opnorm{\hat S-S}^2.
\]

Taking expectations, applying Proposition~\ref{prop:operator-denoising}, and
using the identity \((a+b)^2\le2(a^2+b^2)\) yields the desired statement of Equation~\eqref{eq:centered-frobenius-recovery}.
It remains to prove Lemma~\ref{lem:op-to-frobenius-truncation}, which we do below.

\begin{proof}[Proof of Lemma~\ref{lem:op-to-frobenius-truncation}]
Denote as shorthand \(E:=B-M\).  We first control the operator norm of the truncation residual
by showing that
\begin{equation}
  \opnorm{B-B_d}\le \opnorm{E}.
  \label{eq:positive-truncation-residual}
\end{equation}
We need to show that $|\lambda_k(B)| \leq \opnorm{E}$ for all $k \geq d+1$.
Consider first the set of positive tail eigenvalues $\mathcal K_+ := \{k \in [n]: \lambda_k(B) > 0\}$.
If $d \geq n$, this set is empty.
If $d < n$, we have $\lambda_k(B) \leq \lambda_{d+1}(B)$ for all $k \in \mathcal K_+$.
Then, Weyl's inequality yields
\[
  \lambda_{d+1}(B)
  \le
  \lambda_{d+1}(M)+\opnorm{E}
  =
  \opnorm{E},
\]
where the last equality follows because \(M\succeq0\) has rank at most \(d\).  
For the negative eigenvalues, suppose that \(u\) is a unit eigenvector of \(B\) with negative eigenvalue \(-a\), where \(a>0\).
Then, it suffices to show that $a \leq \opnorm{E}$.
We have
\[
  u^\top Eu
  =
  u^\top Bu-u^\top Mu
  =
  -a-u^\top Mu
  \le -a,
\]
so \(a\le \opnorm{E}\). 
Putting together the bounds on the positive and negative eigenvalues, we have shown Equation~\eqref{eq:positive-truncation-residual}.
After this, the triangle inequality gives us
\[
  \opnorm{B_d-M}
  \le
  \opnorm{B_d-B}+\opnorm{B-M}
  \le
  2\opnorm{E}.
\]
Finally, because we assumed that $\operatorname{rank}(M) \leq d$, we use the subadditivity property of rank to obtain $\operatorname{rank}(B_d-M)\le
\operatorname{rank}(B_d)+\operatorname{rank}(M)\le2d$.  This gives us
\[
  \fnorm{B_d-M}
  \le
  \sqrt{2d}\,\opnorm{B_d-M}
  \le
  2\sqrt{2d}\,\opnorm{B-M},
\]
which completes the proof of the lemma.
\end{proof}
Since we have proved Lemma~\ref{lem:op-to-frobenius-truncation}, we have completed the proof of Proposition~\ref{prop:centered-frobenius}.
\end{proof}

\subsection{From centered to uncentered recovery}

In this section, we complete the proof of Theorem~\ref{thm:gram-recovery}. 
Recall that we denoted $G := XX^T/\sqrt{\tau_2}$ as shorthand. 
We focus on the statistically meaningful regime where
\begin{equation}
\frac{d\tau_1^2\log n}{n\tau_2\stablerank}
+\frac{d}{\stablerank^2} \le c_t
\label{eq:main-rate-is-small}
\end{equation}
for a sufficiently small constant $c_t > 0$ without loss of generality\footnote{To see why we do not need to impose \eqref{eq:main-rate-is-small} as an assumption, we note that it is not hard to show $\E \|\hat S_d\|_F^2/n^2 \le C_t$. 
Together with the explicit computation $\E\fnorm{G}^2/n^2 =
1+\frac1n+\frac{\tau_1^2}{n\tau_2}$, 
this yields
$\E\|\hat S_d-G\|_F^2/n^2 \le
C_t(1+\frac{\tau_1^2}{n\tau_2})$.
Finally, since $\stablerank\le d$, we have $\frac{\tau_1^2}{n\tau_2}\le \frac{d\tau_1^2\log n}{n\tau_2\stablerank}
+\frac{d}{\stablerank^2}$, proving
\eqref{eq:normalized-uncentered-recovery}.}.
Lemma~\ref{lem:centered-uncentered} gives
\begin{equation}
  \left\{
    \E\left[\frac1{n^2}\fnorm{S-G}^2\right]
  \right\}^{1/2}
  \le C\left(
    \frac1{\sqrt n}
    +\frac{\tau_1}{n\sqrt{\tau_2}}
  \right).
  \label{eq:rms-centering-error}
\end{equation}
Combining Equation~\eqref{eq:rms-centering-error} and Proposition~\ref{prop:centered-frobenius} together with Minkowski's inequality yields
\[
  \left\{
    \E\left[
      \frac1{n^2}\fnorm{\hat S_d-G}^2
    \right]
  \right\}^{1/2}
  \le C_t\left(
    \tau_1\sqrt{\frac{d\log n}{n\tau_2 \stablerank}}
    +\frac{\sqrt d}{\stablerank} + \frac{1}{\sqrt{n}} + \frac{\tau_1}{n \sqrt{\tau_2}}.
  \right).
\]
Note that the third and fourth term are absorbed by the first term in the above display.
For the third term, using the fact that \(\tau_1\ge\sqrt{\tau_2 \stablerank}\) (Equation~\eqref{eq:technical-effective-dimension-ordering}) gives
\[
  \frac{1/\sqrt n}
       {\tau_1\sqrt{d\log n/(n\tau_2 \stablerank)}}
  \le\frac1{\sqrt{d\log n}} = o(1).
\]
For the fourth term, using the fact that \(\stablerank\le d\) (Equation~\eqref{eq:technical-effective-dimension-ordering}) gives
\[
  \frac{\tau_1/(n\sqrt{\tau_2})}
       {\tau_1\sqrt{d\log n/(n\tau_2 \stablerank)}}
  =\sqrt{\frac{\stablerank}{nd\log n}}
  \le\frac1{\sqrt{n\log n}} = o(1).
\]
Squaring the previous display and using the identity
\((a+b)^2\le2(a^2+b^2)\) gives
\begin{equation}
  \E\left[
    \frac1{n^2}\fnorm{\hat S_d-G}^2
  \right]
  \le C_t\left(
    \frac{d\tau_1^2\log n}{n\tau_2\stablerank}
    +\frac{d}{\stablerank^2}
  \right),
  \label{eq:scale-explicit-uncentered-recovery}
\end{equation}
which is the desired rate of Theorem~\ref{thm:gram-recovery}.
\qed

\section{Technical lemmas}
\label{sec:technical-ingredients}

In this section, we state and prove the technical lemmas that were used in the proof of Theorem~\ref{thm:gram-recovery}.

\subsection{Bounds on functionals of Gaussian random matrices and vectors}
\label{subsec:external-inputs}

We first provide basic concentration inequalities on functionals of Gaussian random vectors and matrices.

\begin{lemma}
\label{lem:gaussian-concentration-inputs}
Let \(x_1,\ldots,x_n\overset{\mathrm{iid}}{\sim}N(0,\Sigma)\), and let \(X\) be the data matrix with rows given by \(\{x_i^\top\}_{i=1}^n\).  For every
\(u\ge1\), with probability at least \(1-Ce^{-u}\), we have
\begin{align}
  \opnorm{X}
  &\le C\left(
    \sqrt n\,\|\Sigma\|_{\op}^{1/2}
    +\sqrt{\tau_1}
    +\sqrt{\|\Sigma\|_{\op}u}
  \right),
  \label{eq:input-gaussian-matrix-bound}\\
  \max_{1\le i\le n}\|x_i\|^2
  &\le
  \tau_1+C\sqrt{\tau_2(\log n+u)}
  +C\|\Sigma\|_{\op}(\log n+u).
  \label{eq:input-row-norm-bound}
\end{align}
We also have
\begin{align}
\E\max_{1\le i\le n}\|x_i\|^4
  &\le C\left(
    \tau_1+\sqrt{\tau_2\log n}+\|\Sigma\|_{\op}\log n
  \right)^2 .
  \label{eq:input-row-fourth-expectation}
\end{align}
\end{lemma}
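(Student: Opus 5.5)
The plan is to write $X = W\Sigma^{1/2}$, where $W\in\R^{n\times d}$ has i.i.d.\ $N(0,1)$ entries, and handle the three bounds separately with standard Gaussian tools.

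For \eqref{eq:input-gaussian-matrix-bound}, I view $\opnorm{X}=\opnorm{W\Sigma^{1/2}}$ as a Lipschitz function of $W$. Its Lipschitz constant with respect to the Frobenius norm is $\opnorm{\Sigma^{1/2}}=\|\Sigma\|_{\op}^{1/2}$. Gaussian concentration then gives
\[
\opnorm{X}\le \E\opnorm{X}+\sqrt{2\|\Sigma\|_{\op}u}
\]
with probability at least $1-e^{-u}$. For the expectation, Gordon's/Chevet's inequality for $\sup_{a\in S^{n-1},b\in S^{d-1}} a^\top W\Sigma^{1/2} b$ gives
\[
\E\opnorm{X}\le \|\Sigma^{1/2}\|_{\op}\,\E\|g_n\|+\E\|\Sigma^{1/2}g_d\|\le \sqrt n\,\|\Sigma\|_{\op}^{1/2}+\sqrt{\tau_1}.
\]
This settles the first bound.

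For \eqref{eq:input-row-norm-bound}, each $\|x_i\|^2 = g^\top \Sigma g$ is a Gaussian quadratic form with mean $\tau_1$. The Hanson--Wright inequality, or the sharper Laurent--Massart bound for $\sum_k \lambda_k(g_k^2-1)$, gives
\[
\Pp\bigl\{\|x_i\|^2 \ge \tau_1 + 2\sqrt{\tau_2 s}+2\|\Sigma\|_{\op}s\bigr\}\le e^{-s}.
\]
Setting $s=\log n+u$ and taking a union bound over $i\le n$ yields failure probability at most $e^{-u}$. A final union bound with the matrix event gives the stated $1-Ce^{-u}$.

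For \eqref{eq:input-row-fourth-expectation}, I integrate the tail. Let $Y:=\max_i\|x_i\|^2$ and $b:=\tau_1+C\sqrt{\tau_2\log n}+C\|\Sigma\|_{\op}\log n$. The previous step gives, for every $u\ge1$,
\[
Y\le b+C\sqrt{\tau_2u}+C\|\Sigma\|_{\op}u
\]
with probability at least $1-e^{-u}$. Then
\[
\E Y^2\le 2b'^2+2\E\bigl[(Y-b')_+^2\bigr],
\]
where $b'$ is the bound at $u=1$. Writing the second term as $\int_0^\infty 2r\,\Pp\{Y-b'>r\}\,dr$ and changing variables to $u$ gives $O(\tau_2+\|\Sigma\|_{\op}^2)$. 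This is dominated by $(\sqrt{\tau_2\log n}+\|\Sigma\|_{\op}\log n)^2$ for $n\ge3$, or in any case by $C\tau_1^2$, since $\tau_2\le\tau_1^2$.

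No step is a real obstacle. The only point needing care is getting the $\sqrt{\tau_1}$ (rather than $\sqrt{d\|\Sigma\|_{\op}}$) term in the operator-norm expectation, which Chevet's inequality handles.
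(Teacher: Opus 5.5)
Your proposal is correct and follows essentially the same route as the paper: Hanson--Wright (or Laurent--Massart) with a union bound for the row norms, and tail integration for the fourth moment. For the operator norm, both use Gaussian Lipschitz concentration around the bound $\E\opnorm{X}\le\sqrt{n\|\Sigma\|_{\op}}+\sqrt{\tau_1}$. The only difference is that you cite Chevet's inequality as a black box, whereas the paper proves that expectation bound directly by Sudakov--Fernique comparison with the process $\sqrt{\|\Sigma\|_{\op}}\,\ip{g}{a}+\ip{q}{\Sigma^{1/2}b}$, which is exactly how Chevet's inequality is proved.
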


\begin{proof}
Applying the Hanson--Wright inequality~\cite[Theorem~1.1]{RV13} to
\(\|x_i\|^2\), followed by a union bound over \(i\in[n]\), yields
Equation~\eqref{eq:input-row-norm-bound}. Integrating this tail bound over
\(u\) yields Equation~\eqref{eq:input-row-fourth-expectation}.

We next prove Equation~\eqref{eq:input-gaussian-matrix-bound}.
First, we calculate $\E[\opnorm{X}]$ using the Sudakov-Fernique inequality.
We write \(X=W\Sigma^{1/2}\), where \(W\) has
independent standard Gaussian entries, and recall that we denoted \(\mu=\opnorm{\Sigma}\).  Let
\(g\sim N(0,I_n)\) and \(q\sim N(0,I_d)\) be independent Gaussian vectors of dimension $n$ and $d$ respectively.  For unit vectors
\(a\in\R^n\) and \(b\in\R^d\), consider the centered Gaussian processes
\[
  \mathcal X_{a,b}:=a^\top W\Sigma^{1/2}b,
  \qquad
  \mathcal Y_{a,b}
  :=\sqrt\mu\,\ip{g}{a}+\ip{q}{\Sigma^{1/2}b}.
\]
For two index pairs of unit vectors \((a,b)\) and \((a',b')\), a direct computation gives 
\begin{align*}
  &\E_{g,q}(\mathcal Y_{a,b}-\mathcal Y_{a',b'})^2
   -\E_{W}(\mathcal X_{a,b}-\mathcal X_{a',b'})^2
  =2(1-\ip{a}{a'})
    \bigl(\mu-\ip{\Sigma^{1/2}b}{\Sigma^{1/2}b'}\bigr)
  \ge0.
\end{align*}
The Sudakov--Fernique inequality~\cite[Theorem~7.2.11]{Ver18} therefore gives
\[
  \E\opnorm{X}
  \le
  \sqrt\mu\,\E\norm{g}+\E\norm{\Sigma^{1/2}q}
  \le
  \sqrt{n\mu}+\sqrt{\tau_1}.
\]
Moreover, the map \(W\mapsto\opnorm{W\Sigma^{1/2}}\) is also
\(\sqrt\mu\)-Lipschitz with respect to the Frobenius norm.
A standard result on concentration of Lipschitz functions of Gaussian vectors or matrices~\cite[Theorem~5.2.2]{Ver18} consequently gives, for every
\(u\ge1\),
\[
  \Pp\left\{
    \opnorm{X}>\sqrt{n\mu}+\sqrt{\tau_1}+C\sqrt{\mu u}
  \right\}
  \le Ce^{-u}.
\]
This proves Equation~\eqref{eq:input-gaussian-matrix-bound}.
\end{proof}

We record an auxiliary estimate used later.

\begin{lemma}
\label{lem:scale-correlation-bounds}
Let
\[
  a_i=\frac{x_i^\top\Sigma x_i}{\tau_2},
  \qquad
  \delta_i=a_i-1.
\]
For every fixed \(p\ge1\),
\begin{equation}
  \|\delta_i\|_{L^p}
  \le C_p\effectivedim^{-1/2},
  \qquad
  \E\max_{1\le i\le n}|\delta_i|^p
  \le C_p\eta_n^p,
  \label{eq:delta-explicit-moments}
\end{equation}
In particular,
\begin{equation}
  \E\max_i(1+a_i^3)\le C(1+\eta_n^3).
  \label{eq:max-ai-cubic}
\end{equation}
\end{lemma}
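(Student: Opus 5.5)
We apply Hanson--Wright to the quadratic form $x_i^\top\Sigma x_i$ and then integrate the resulting tail bound. Write $x_i=\Sigma^{1/2}w_i$ with $w_i\sim N(0,I_d)$, so that
\[
  x_i^\top\Sigma x_i=w_i^\top\Sigma^2 w_i,\qquad \E[x_i^\top\Sigma x_i]=\tr(\Sigma^2)=\tau_2 .
\]
Hence $\delta_i=(w_i^\top\Sigma^2w_i-\E w_i^\top\Sigma^2w_i)/\tau_2$. Hanson--Wright~\cite[Theorem~1.1]{RV13} with matrix $\Sigma^2$ gives, for all $s\ge0$,
\[
  \Pp\{|\delta_i|>s\}\le 2\exp\!\left(-c\min\left\{\frac{s^2\tau_2^2}{\fnorm{\Sigma^2}^2},\frac{s\tau_2}{\opnorm{\Sigma^2}}\right\}\right).
\]
Here $\fnorm{\Sigma^2}^2=\tau_4$ and $\opnorm{\Sigma^2}=\mu^2$. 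The two ratios are therefore $\tau_2^2/\tau_4=\effectivedim$ and $\tau_2/\mu^2=\stablerank$, so
\[
  \Pp\{|\delta_i|>s\}\le 2\exp\bigl(-c\min\{s^2\effectivedim,\ s\,\stablerank\}\bigr).
\]

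For the first bound in \eqref{eq:delta-explicit-moments}, integrate $\E|\delta_i|^p=\int_0^\infty p s^{p-1}\Pp\{|\delta_i|>s\}\,ds$. The subgaussian part contributes $C_p\effectivedim^{-p/2}$ and the subexponential part contributes $C_p\stablerank^{-p}$. By \eqref{eq:technical-effective-dimension-ordering} we have $\effectivedim\le\stablerank^2$, so $\stablerank^{-1}\le\effectivedim^{-1/2}$. Thus $\|\delta_i\|_{L^p}\le C_p\effectivedim^{-1/2}$.

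For the maximum, take a union bound over $i\in[n]$. Setting $s=C\bigl(\sqrt{(\log n+u)/\effectivedim}+(\log n+u)/\stablerank\bigr)$ gives
\[
  \Pp\Bigl\{\max_i|\delta_i|>s\Bigr\}\le 2e^{-u}.
\]
Write $M=\max_i|\delta_i|$. We have $s\le C(\eta_n+\sqrt{u/\effectivedim}+u/\stablerank)$ and $\effectivedim,\stablerank\ge1$, so
\[
  \Pp\{M>C\eta_n+C(\sqrt u+u)\}\le 2e^{-u}.
\]
Integrating, $\E M^p\le C_p(\eta_n^p+1)$. This is not quite $C_p\eta_n^p$ when $\eta_n$ is small, and this is the only delicate point. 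To remove the additive constant, do not drop the factors $1/\sqrt{\effectivedim}$ and $1/\stablerank$ in the $u$-terms. With them kept,
\[
  \E M^p\le C_p\Bigl(\eta_n^p+\effectivedim^{-p/2}+\stablerank^{-p}\Bigr).
\]
Both extra terms are at most $\eta_n^p$ because $\log n\ge1$ for $n\ge3$. (If one wants all $n$, the small-$n$ cases are absorbed into $C_p$.)

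Finally, $a_i=1+\delta_i$ gives $a_i^3\le 4(1+|\delta_i|^3)$. Hence
\[
  \E\max_i(1+a_i^3)\le 5+4\,\E M^3\le C(1+\eta_n^3),
\]
which is \eqref{eq:max-ai-cubic}.
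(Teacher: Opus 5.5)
Your proposal is correct and follows essentially the same route as the paper. Both arguments apply Hanson--Wright to $w_i^\top\Sigma^2 w_i$ to get a mixed tail with parameters $\effectivedim$ and $\stablerank$, integrate it using $\effectivedim\le\stablerank^2$, take a union bound for the maximum, and finish with $a_i=1+\delta_i$.

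Your one addition is keeping the $\effectivedim^{-1/2}$ and $\stablerank^{-1}$ scalings in the $u$-terms and using $\log n\ge1$. This rules out a spurious additive constant in $\E\max_i|\delta_i|^p$, a step the paper leaves implicit under ``integration gives.''
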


\begin{proof}
The random variable \(\delta_i = a_i-1\) is the centered quadratic form
\[
  \frac{x_i^\top\Sigma x_i-\tr(\Sigma^2)}{\tr(\Sigma^2)}.
\]
Since \(x_i=\Sigma^{1/2}w_i\), this is
\[
  \delta_i=\sum_s\frac{\lambda_s^2}{\tau_2}(w_{is}^2-1).
\]
Here \(\max_s\lambda_s^2/\tau_2=1/\stablerank\) and
\(\sum_s\lambda_s^4/\tau_2^2=1/\effectivedim\).
The Hanson--Wright inequality~\cite[Theorem~1.1]{RV13} gives
\[
  \Pp\left\{|\delta_i|>
    C\left(\sqrt{\frac{u}{\effectivedim}}+\frac{u}{\stablerank}\right)\right\}
  \le 2e^{-u},
  \qquad u\ge1.
\]
Since \(\effectivedim\le \stablerank^2\), integrating this tail bound gives
\(\|\delta_i\|_{L^p}\le C_p\effectivedim^{-1/2}\) for fixed \(p\).
After a union bound over \(i\le n\), integration gives the explicit maximal
estimate
\[
  \E\max_i|\delta_i|^p
  \le
  C_p\left(\sqrt{\frac{\log n}{\effectivedim}}+\frac{\log n}{\stablerank}\right)^p
  =C_p\eta_n^p.
\]
Because \(a_i=1+\delta_i\), for every fixed \(p,m\ge1\),
\[
  \E\max_{1\le i\le n}(1+a_i^m)^p
  \le C_{p,m}\bigl(1+\eta_n^{mp}\bigr).
\]
Taking \(p=1\) and \(m=3\) proves \eqref{eq:max-ai-cubic}.
\end{proof}

The next lemma is a key upper bound on the expected operator norm of a $n \times d^2$ matrix $Y$ whose rows correspond to samples and columns correspond to entries of the centered sample covariance $x_ix_i^\top - \Sigma$.
This matrix naturally arises while characterizing the operator norm of the quadratic component $H \Delta^{(2)} H$ in Proposition~\ref{prop:quadratic-centered}.
\begin{lemma}
\label{lem:quadratic-feature-concentration}
For a vector \(x\in\R^d\), define \(C_x:=xx^\top-\Sigma\), and let
\(Y\in\R^{n\times d^2}\) have rows given by
$\{Y_i=\operatorname{vec}(C_{x_i})^\top\}_{i=1}^n$.
Recall the definition of the shorthand notation $\mathcal B_n$ from Equation~\eqref{eq:proof-abbreviations}.
Then, we have
\begin{equation}
  \E\max_{1\le i\le n}\fnorm{C_{x_i}}^2
  \le C\mathcal B_n^2,
\label{eq:quadratic-feature-diagonal-bound}
\end{equation}
and
\begin{equation}
  \E\opnorm{Y^\top Y}
  \le C\left\{
    n\mu^2+\mathcal B_n^2+\mu\mathcal B_n\sqrt{n\log n}
  \right\}.
  \label{eq:input-quadratic-feature-expectation}
\end{equation}
Moreover, if \(z_1,\ldots,z_n\sim N(0,\Sigma)\) are iid and independent of
the sample, then, with expectation over both samples,
\begin{equation}
  \E\max_{1\le j\le n}\sum_{i=1}^n\tr(C_{x_i}C_{z_j})^2
  \le Cn\mu^2\mathcal B_n^2.
  \label{eq:quadratic-feature-column-bound}
\end{equation}
\end{lemma}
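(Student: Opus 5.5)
We prove the three bounds in Lemma~\ref{lem:quadratic-feature-concentration} in the order stated, reducing each to Lemma~\ref{lem:gaussian-concentration-inputs}.

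\textbf{Diagonal bound \eqref{eq:quadratic-feature-diagonal-bound}.} Write $\fnorm{C_x}^2=\|x\|^4-2x^\top\Sigma x+\tau_2$. Since $x^\top\Sigma x\ge 0$, this gives $\fnorm{C_x}^2\le \|x\|^4+\tau_2$. Taking the maximum over $i$ and applying \eqref{eq:input-row-fourth-expectation} bounds the first term by $C\mathcal B_n^2$. Since $\tau_2\le\tau_1^2\le\mathcal B_n^2$, the second term is also at most $\mathcal B_n^2$.

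\textbf{Operator norm bound \eqref{eq:input-quadratic-feature-expectation}.} We bound $\opnorm{Y^\top Y}=\opnorm{YY^\top}$ by splitting $YY^\top=D+O$, where $D$ is the diagonal part with entries $\fnorm{C_{x_i}}^2$ and $O$ is the off-diagonal part with entries $\tr(C_{x_i}C_{x_j})$. The diagonal part contributes $\max_i\fnorm{C_{x_i}}^2$, which is at most $C\mathcal B_n^2$ in expectation by the previous step.

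For the off-diagonal part, note that $\E Y_i=0$ and the $Y_i$ are i.i.d. A matrix Rosenthal/Bernstein-type inequality for sums of independent rank-one matrices, $\E\opnorm{\sum_i Y_i^\top Y_i}\lesssim n\opnorm{\E Y_1^\top Y_1}+\log n\,\E\max_i\|Y_i\|^2$, would be the first thing to try. However, the factor $\log d^2$ is undesirable here. Instead, we use a decoupling/symmetrization argument as in \cite{KRM25}: bound $\opnorm{O}$ by a decoupled version with an independent copy $z_j$, and then use a row-sum (Schur test) estimate together with \eqref{eq:quadratic-feature-column-bound}.

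Concretely, we use the population term $n\opnorm{\E Y_1^\top Y_1}$. The covariance of $\operatorname{vec}(xx^\top)$ is $(I+K)(\Sigma\otimes\Sigma)$, where $K$ is the commutation matrix, so its operator norm is at most $2\mu^2$. This yields the $n\mu^2$ term.

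For the fluctuation term, we apply a standard bound of the form
\[
\E\opnorm{Y^\top Y-n\E Y_1^\top Y_1}\lesssim \sqrt{n\opnorm{\E Y_1^\top Y_1}\,\E\max_i\|Y_i\|^2\log n}+\E\max_i\|Y_i\|^2\log n.
\]
This produces the cross term $\mu\mathcal B_n\sqrt{n\log n}$. The last term must be absorbed into $\mathcal B_n^2$. I expect this to be the main obstacle: the naive $\log n$ factor on $\max\|Y_i\|^2$ must be avoided, and I would try the decoupling route to do so. That is, bound the off-diagonal Gram part by $\max_j\big(\sum_i \tr(C_{x_i}C_{z_j})^2\big)^{1/2}$ times $\sqrt n$-type factors, using Cauchy--Schwarz in the Schur test.

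\textbf{Column bound \eqref{eq:quadratic-feature-column-bound}.} Condition on $z_j$. The summands $\tr(C_{x_i}C_{z_j})=x_i^\top C_{z_j}x_i-\tr(\Sigma C_{z_j})$ are independent centered quadratic forms. Each has conditional second moment $2\tr((\Sigma^{1/2}C_{z_j}\Sigma^{1/2})^2)\le 2\mu^2\fnorm{C_{z_j}}^2$, so the sum over $i$ has conditional mean $\lesssim n\mu^2\fnorm{C_{z_j}}^2$.

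To handle the maximum over $j$, we control the deviation of $\sum_i$ from its mean. Each summand is the square of a Hanson--Wright variable, with a sub-exponential tail at scale $\mu\fnorm{C_{z_j}}$. A Bernstein-type bound for sums of such squares, combined with a union bound over $j\le n$, gives the bound
\[
\mu^2\fnorm{C_{z_j}}^2\big(n+\sqrt{n}\log n+\log^2 n\big)\lesssim n\mu^2\fnorm{C_{z_j}}^2 .
\]
Taking the maximum over $j$ and applying \eqref{eq:quadratic-feature-diagonal-bound} yields $Cn\mu^2\mathcal B_n^2$.
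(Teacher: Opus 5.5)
Your diagonal bound is correct; the exact identity $\fnorm{C_x}^2=\norm{x}^4-2x^\top\Sigma x+\tau_2$ is slightly cleaner than the paper's $2\norm{x}^4+2\tau_2$. Your column bound \eqref{eq:quadratic-feature-column-bound} is essentially the paper's argument, with a Bernstein bound for squares of sub-exponential variables plus a union bound in place of scalar Rosenthal at $p=\lceil\log n\rceil$.

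\textbf{The gap is in \eqref{eq:input-quadratic-feature-expectation}.} What your argument actually proves is the matrix-Rosenthal bound $\E\opnorm{Y^\top Y}\lesssim n\mu^2+\mathcal B_n^2\log n$. Your cross term $\mu\mathcal B_n\sqrt{n\log n}$ is dominated by these two terms by AM--GM.

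The factor $\log d^2$ is not the real issue: restricting to the span of the $n$ rows, as in Lemma~\ref{lem:independent-columns}, already gives $\log n$. The real issue is that $\log n$ multiplies $\E\max_i\norm{Y_i}^2\asymp\mathcal B_n^2$. This makes your bound genuinely weaker than the statement. For example, for $\Sigma=I_d$ with $d\asymp n$ you get order $n^2\log n$, whereas the lemma claims $O(n^2)$.

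Your proposed fix does not work. A Schur test with Cauchy--Schwarz bounds the decoupled off-diagonal matrix only by $\sqrt n\max_j(\sum_i\tr(C_{x_i}C_{z_j})^2)^{1/2}\approx n\mu\mathcal B_n$. For $\Sigma=I_d$ this is $nd$, far above $n+d^2+d\sqrt{n\log n}$ when $1\ll d\ll n$. Entrywise bounds of this kind cannot see the cancellations among the centered entries.

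\textbf{The missing idea is the paper's self-bounding argument.} Apply Proposition~\ref{prop:offdiag-krm} to the kernel $k(x,y)=\tr(C_xC_y)$, whose first-order Hoeffding projection vanishes.

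After decoupling, the columns $a_j=(\tr(C_{x_i}C_{z_j}))_{i}$ are conditionally independent given the sample. Lemma~\ref{lem:independent-columns} then bounds $\opnorm{K-\diag(K)}$ for $K=YY^\top$ linearly, not as a squared norm. The bound is $\sqrt{n\opnorm{G}}$ plus $\sqrt{\log n}\,(\E\max_j\norm{a_j}^2)^{1/2}$, and the latter is $\lesssim\mu\mathcal B_n\sqrt{n\log n}$ by \eqref{eq:quadratic-feature-column-bound}. So the $\log n$ now multiplies a column norm of size $\sqrt n\,\mu\mathcal B_n$ that enters linearly, instead of $\mathcal B_n^2$.

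The key structural observation is that, by Isserlis, $G_{ij}=\E_z\tr(C_{x_i}C_z)\tr(C_zC_{x_j})=2\tr(C_{x_i}\Sigma C_{x_j}\Sigma)$. Equivalently, $G=2Y(\Sigma\otimes\Sigma)Y^\top$, so $\E\opnorm{G}\le 2\mu^2 M$, where $M=\E\opnorm{Y^\top Y}$ is exactly the quantity being bounded. Adding the diagonal part gives
\[
M\le C\left(\mathcal B_n^2+\mu\sqrt{nM}+\mu\mathcal B_n\sqrt{n\log n}\right),
\]
and Young's inequality absorbs $\mu\sqrt{nM}$ into $M/2+Cn\mu^2$.

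As a side remark, your weaker bound would still suffice downstream in Proposition~\ref{prop:quadratic-centered}. Its extra contribution $\frac{\mu}{\tau_2}\sqrt{n\,\mathcal B_n^2\log n}$ coincides with the term $\frac{\mu\mathcal B_n}{\tau_2}\sqrt{n\log n}$ already present there. It does not, however, prove the lemma as stated.
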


\begin{proof}
We define the kernel matrix $K := YY^\top$ and note that
\[
  K_{ij}=\tr(C_{x_i}C_{x_j}).
\]
Since $Y^\top Y$ and $YY^\top$ have the same nonzero eigenvalues, we have $\opnorm{Y^\top Y} = \opnorm{K}$ and so it suffices to control $\opnorm{K}$.
This proof will invoke Proposition~\ref{prop:offdiag-krm}, which is an adaptation of the main result in~\cite{KRM25}.
We decompose $K$ into its diagonal and off-diagonal component and use the triangle inequality to obtain $\opnorm{K} \leq \opnorm{\diag(K)} + \opnorm{K - \diag(K)}$.

For the diagonal component, we have
\[
  \opnorm{\diag(K)}
  =
  \max_i\|C_{x_i}\|_F^2
  \le
  2\max_i\|x_i\|^4+2\tau_2,
\]
where the last inequality uses the identity $\fnorm{A + B}^2 \leq 2 \fnorm{A}^2 + 2 \fnorm{B}^2$.
Lemma~\ref{lem:gaussian-concentration-inputs}, together with
\(\tau_2\le\tau_1^2\), gives
\begin{equation*}
  \E\opnorm{\diag(K)}
  =\E\max_{1\le i\le n}\fnorm{C_{x_i}}^2\le C\mathcal B_n^2 .
\end{equation*}

To control the off-diagonal component, we apply Proposition~\ref{prop:offdiag-krm} to
the symmetric kernel
\[
  k(x,y):=\tr\{(xx^\top-\Sigma)(yy^\top-\Sigma)\}.
\]
Observe that its first-order Hoeffding projection $h(x) := \E_y[k(x,y)]$ is equal to zero because
\(\E(zz^\top-\Sigma)=0\).  To identify
the correlation matrix in Proposition~\ref{prop:offdiag-krm}, we compute
\[
  \begin{aligned}
  G_{ij}
  &=
  \E_z \tr(C_{x_i}C_z)\tr(C_zC_{x_j})\\
  &=
  \E_z\!\left[
    \{z^\top C_{x_i} z-\tr(C_{x_i}\Sigma)\}
    \{z^\top C_{x_j} z-\tr(C_{x_j}\Sigma)\}
  \right]
  =
  2\tr(C_{x_i}\Sigma C_{x_j}\Sigma).
  \end{aligned}
\]
The last equality is the covariance identity for Gaussian quadratic forms,
obtained directly from Isserlis' formula~\cite{Isserlis1918}.  With columnwise
vectorization, we also have
\[
  \tr(C_{x_i}\Sigma C_{x_j}\Sigma)
  =
  \operatorname{vec}(C_{x_i})^\top
  (\Sigma\otimes\Sigma)\operatorname{vec}(C_{x_j}).
\]
Consequently, we have
\[
  G
  =
  2Y(\Sigma\otimes\Sigma)Y^\top,
\]
and from the sub-multiplicative property of the operator norm,
\begin{equation}
  \E\opnorm{G}
  \le
  2\|\Sigma\|_{\op}^2\,\E\opnorm{Y^\top Y}.
  \label{eq:quadratic-feature-self-correlation}
\end{equation}
Observe that, for the kernel that we have defined, the quantity $B_n^2$ defined in Proposition~\ref{prop:offdiag-krm} is given by
\[
B_n^2 = \E_{z_1,\ldots,z_n} \max_{1 \leq j \leq n} \sum_{i=1}^n \tr(C_{x_i} C_{z_j})^2
\]
and so, to characterize this quantity, we prove Equation~\eqref{eq:quadratic-feature-column-bound}.
Conditional on $z_j$, the random variable
\[
  Q_{ij}:=\tr(C_{z_j}C_{x_i})
  =x_i^\top C_{z_j}x_i-\tr(\Sigma C_{z_j})
\]
is a centered Gaussian quadratic form.  
Denote $w_i := \Sigma^{-1/2} x_i$ and note that $w_i \sim N(0,I_d)$.
Then, applying the Hanson--Wright inequality~\cite[Theorem 1.1]{RV13} with $A := \Sigma^{1/2} C_{z_j} \Sigma^{1/2}$ and $X := w_i$ yields, for every $p \ge 2$,
\[
  \|Q_{ij}^2\|_{L^p(x\mid z)}
  =\|Q_{ij}\|_{L^{2p}(x\mid z)}^2
  \le Cp^2s_j^2
\]
where we defined \(s_j=\fnorm{\Sigma^{1/2}C_{z_j}\Sigma^{1/2}}\) as shorthand.
Now, we apply the scalar Rosenthal inequality~\cite[Theorem~15.10]{BLM13} to
\(\sum_i\{Q_{ij}^2-\E_xQ_{ij}^2\}\) with
\(p=\lceil\log n\rceil\) and maximize the resulting upper bound over $1 \leq j \leq n$.  Using \(n^{1/p}\le C\) and \(p^3\le Cn\), we obtain
\[
  \E_x\max_j\sum_iQ_{ij}^2
  \le Cn\max_js_j^2 .
\]
Finally, we take the outer expectation over $z_1,\ldots,z_n$ and use the fact that \(s_j\le\|\Sigma\|_{\op}\fnorm{C_{z_j}}\) to obtain
\[
  \E\max_j\sum_iQ_{ij}^2
  \le Cn\mu^2\mathcal B_n^2,
\]
which proves Equation~\eqref{eq:quadratic-feature-column-bound}.

We now characterize $\E\opnorm{K - \diag(K)}$ through a self-bounding argument.
We denote as shorthand
\[
  M=\E\opnorm{Y^\top Y}.
\]
Jensen's inequality, together with Proposition~\ref{prop:offdiag-krm}, Equation~\eqref{eq:quadratic-feature-self-correlation} and Equation~\eqref{eq:quadratic-feature-column-bound}, gives
\[
  \E\opnorm{K-\diag(K)}
  \le
  C\left(
    \mu \sqrt{nM}
    +
    \mu \mathcal B_n\sqrt{n\log n}
  \right).
\]
Since $M \leq \opnorm{\diag(K)} + \opnorm{K - \diag(K)}$, incorporating the bound on the diagonal component (Equation~\eqref{eq:quadratic-feature-diagonal-bound}) yields
\[
  M
  \le
  C\left(
    \mathcal B_n^2
    +
    \mu \sqrt{nM}
    +
    \mu \mathcal B_n\sqrt{n\log n}
  \right).
\]
Young's inequality gives
\[
  C\mu\sqrt{nM}\le\frac12M+\frac{C}{2}\mu^2n,
\]
which, plugged into the above display, results in the bound
\[
  M\le C\left\{
    n\mu^2+\mathcal B_n^2+\mu\mathcal B_n\sqrt{n\log n}
  \right\}.
\]
This completes the proof of the lemma.
\end{proof}

\subsection{Kernel matrix bounds}

In this section, we adapt and sharpen the decoupling and non-commutative Khintchine (NCK) inequality approach of~\cite{KRM25} to our setting, where we need to bound quantities of the form $(\E[\opnorm{K}^2])^{1/2}$, where $K$ is a suitable kernel matrix.
While the decoupling step is identical with the operator norm function replaced by the squared operator norm function (see Equation~\eqref{eq:decouple-to-columns}), the NCK step is specialized to an expectation bound on the operator norm of the Gram matrix of independent columns, viewed as a sum of
positive semidefinite matrices~\cite[Theorem~5.1(1)]{Tropp2016}. 
We state and prove this lemma for completeness.

\begin{lemma}
\label{lem:independent-columns}
Let $N\ge2$ and $m\ge1$, let $a_1,\ldots,a_N$ be independent random vectors
in $\R^m$ such that $\E\max_j\norm{a_j}^2<\infty$, and define the random matrix $A=[a_1\ \cdots\ a_N]$.  Then, we have
\begin{equation}
  \left(\E\opnorm{A}^2\right)^{1/2}
  \le
  \sqrt{2} \opnorm{\sum_{j=1}^N\E[a_ja_j^\top]}^{1/2}
  +C\sqrt{\log N}
  \left(\E\max_{1\le j\le N}\norm{a_j}^2\right)^{1/2}.
  \label{eq:independent-column-bound}
\end{equation}
\end{lemma}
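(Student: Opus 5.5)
The plan is to reduce the claim to the expectation bound for sums of independent positive semidefinite matrices, \cite[Theorem~5.1(1)]{Tropp2016}. We have $\opnorm{A}^2=\opnorm{AA^\top}$ and $AA^\top=\sum_{j=1}^N a_ja_j^\top$. The summands $a_ja_j^\top$ are independent, positive semidefinite, and each has operator norm $\norm{a_j}^2$. Write $L:=\max_j\norm{a_j}^2$ for their maximal norm. Tropp's result, in the form that handles random (unbounded) maximal summands, gives
\[
  \E\opnorm{\sum_j a_ja_j^\top}
  \le 2\opnorm{\sum_j\E[a_ja_j^\top]}+C\log(\text{dim})\,\E L .
\]
This is essentially the whole argument, apart from the dimension issue discussed next.

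The hard step is that Tropp's bound carries $\log m$, the ambient dimension, whereas the statement asks for $\log N$. I would first reduce to an effective dimension at most $N$. The key observation is that $AA^\top=\sum_j a_ja_j^\top$ lives on the random span of the $a_j$, which is not a fixed subspace, so Tropp cannot be applied there directly. Instead I would pass to the $N\times N$ Gram matrix $A^\top A$, which has the same operator norm, and decompose it as
\[
  A^\top A=\sum_{j} \norm{a_j}^2 e_je_j^\top+\text{(off-diagonal part)} .
\]
That decomposition is awkward, however. The cleaner alternative is the symmetrization route of Rudelson's inequality, applied with the matrix Khintchine inequality on the $N$-dimensional side. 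Concretely, set $S:=\sum_j a_ja_j^\top$ and symmetrize:
\[
  \E\opnorm{S}\le\opnorm{\E S}+2\,\E\opnorm{\sum_j\varepsilon_j a_ja_j^\top}.
\]
Conditionally on the $a_j$, write $\sum_j\varepsilon_ja_ja_j^\top=A D_\varepsilon A^\top$, where $D_\varepsilon=\diag(\varepsilon)$. Its nonzero spectrum coincides with that of $D_\varepsilon^{1/2}$-type expressions on $\R^N$; more robustly, it lies in the range of $A$, which has dimension at most $N$. After a conditional rotation, the noncommutative Khintchine inequality can therefore be applied in dimension $\min(m,N)\le N$. This yields
\[
  \E_\varepsilon\opnorm{\sum_j\varepsilon_j a_ja_j^\top}\le C\sqrt{\log N}\,\opnorm{S}^{1/2}L^{1/2}.
\]
Taking expectations and applying Cauchy--Schwarz gives
\[
  \E\opnorm{S}\le\opnorm{\E S}+C\sqrt{\log N}\,(\E\opnorm{S})^{1/2}(\E L)^{1/2}.
\]
The same Young-inequality self-bounding used in Lemma~\ref{lem:quadratic-feature-concentration} then produces
\[
  \E\opnorm{S}\le 2\opnorm{\E S}+C\log N\,\E L .
\]

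Finally, $\E\opnorm{A}^2=\E\opnorm{S}$, and $\sqrt{x+y}\le\sqrt x+\sqrt y$ converts this into the stated form: $\sqrt2\,\opnorm{\sum_j\E a_ja_j^\top}^{1/2}+C\sqrt{\log N}\,(\E\max_j\norm{a_j}^2)^{1/2}$. The finiteness assumption $\E\max_j\norm{a_j}^2<\infty$ ensures every quantity in the self-bounding step is finite, so the absorption is legitimate. The main point to check carefully is the dimension reduction from $m$ to $N$ in the Khintchine step, that is, that the conditional range argument really yields $\log N$ rather than $\log m$. The rest follows standard lines.
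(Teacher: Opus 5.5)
Your proposal is correct and follows the paper's route. The steps are the same: symmetrization, then the Rademacher matrix Khintchine inequality applied conditionally on the $a_j$ together with $\sum_j (a_ja_j^\top)^2 \preceq (\max_j\norm{a_j}^2)\, S$, then Cauchy--Schwarz and Young's-inequality self-bounding, giving the same constant $\sqrt{2}$. Your explicit restriction to the span of the fixed $a_j$, which has dimension at most $N$, correctly justifies the $\sqrt{\log N}$ factor (rather than $\sqrt{\log m}$); the paper uses this factor without comment.
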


\begin{proof}
Note that $\opnorm{A}^2 = \opnorm{AA^T}$, so it suffices to upper bound the operator norm of the Gram matrix $AA^T$.
Denote as shorthand
\[
  S:=AA^\top=\sum_{j=1}^N a_ja_j^\top,
  \qquad
  \Sigma_A=\E S,
  \qquad
  M=\E\opnorm{S},
  \qquad
  b=\left(\E\max_{1 \leq j \leq N}\norm{a_j}^2\right)^{1/2}.
\]
Expressed in this notation, the quantity $\sqrt{M}$ is precisely the left-hand side of Equation~\eqref{eq:independent-column-bound}.
Let $a'_1,\ldots,a'_N$ be independent copies of $a_1,\ldots,a_N$,
independent of one another and of the original vectors, and define
$S':=\sum_j a'_j(a'_j)^\top$.  Let
$\varepsilon := \{\varepsilon_1,\ldots,\varepsilon_N\}$ denote independent Rademacher signs,
independent of all the vectors.  Banach-space symmetrization
\cite[Fact~3.1]{Tropp2016}, applied to the independent matrices
$\{a_ja_j^\top\}_{j=1}^N$, and the triangle inequality give
\begin{align}
  \E\opnorm{S-\Sigma_A}
  \le \E\opnorm{S-S'} 
  \le 2\E\left[\E_\varepsilon
    \opnorm{\sum_{j=1}^N\varepsilon_j a_ja_j^\top}\right].
  \label{eq:gram-symmetrization}
\end{align}
For fixed $a_1,\ldots,a_N$, the Rademacher matrix-series inequality
\cite[Theorem~4.1 and equation~(4.2)]{Tropp2016} gives the explicit estimate
\begin{align*}
  \E_\varepsilon
  \opnorm{\sum_{j=1}^N\varepsilon_j a_ja_j^\top}
  &\le \sqrt{1+2\lceil\log N\rceil}
  \opnorm{\left(\sum_{j=1}^N(a_ja_j^\top)^2\right)^{1/2}}.
\end{align*}
Since $(aa^\top)^2=\norm{a}^2aa^\top$ and $aa^T \succ 0$, we have $\sum_{j=1}^N(a_ja_j^\top)^2
  \preceq
  \left(\max_{1 \leq j \leq N}\norm{a_j}^2\right)S$.
Substituting this into Equation~\eqref{eq:gram-symmetrization} and applying the Cauchy-Schwarz inequality gives us
\begin{align*}
  \E\opnorm{S-\Sigma_A}
  \le C\sqrt{\log N}
     \cdot \E\left[\max_j\norm{a_j}\,\opnorm{S}^{1/2}\right]
  \le C\sqrt{\log N}\,b\sqrt M.
\end{align*}
Consequently, we have
\[
  M \le \opnorm{\Sigma_A} + \opnorm{S - \Sigma_A} \le \opnorm{\Sigma_A}
  +C\sqrt{\log N}\,b\sqrt M.
\]
Applying Young's inequality on the term $C \sqrt{\log N} b \sqrt{M}$ yields
\[
  \sqrt M
  \le \sqrt{2} \opnorm{\Sigma_A}^{1/2}
  +C\sqrt{\log N}\,b.
\]
This completes the proof of the lemma.
\end{proof}
In the remainder of this section, we state and prove for completeness the adaptation of the argument underlying~\cite[Theorem 1]{KRM25} to our setting.
We fix $n\ge2$, let $\{x_1,\ldots,x_n,z\}$ be iid with law $\mathcal P$, and for this section all expectations are taken with respect to the law $\mathcal P$. 
Let $k$ be a jointly measurable, symmetric, real-valued kernel satisfying
\begin{equation}
  \E [k(x_1,z)^2]<\infty.
  \label{eq:L2-kernel-assumption}
\end{equation}
Note that~\cite{KRM25} instead assumed that $\E|k(x_1,z)| < \infty$; Equation~\eqref{eq:L2-kernel-assumption} is a slightly stronger assumption but will continue to hold in our setting.
We define the quantities $h(x) := \E_z[k(x,z)]$ and $\theta := \E[h(x)]$, and define the associated \emph{canonical kernel} by
\begin{equation*}
  k_0(x,y)=k(x,y)-h(x)-h(y)+\theta.
\end{equation*}
Note by definition that $\E_x[k_0(x,y)] = \E_y[k_0(x,y)] = 0$ for independent vectors $x,y \sim P$.
With this notation in hand, we state and prove the proposition that adapts the argument of~\cite{KRM25} to our setting.

\begin{proposition}
\label{prop:offdiag-krm}
Define the off-diagonal kernel matrix corresponding to the kernel $k$ by $\Delta_{ij} := k(x_i,x_j)\mathbbm{1}\{i\ne j\}$.
Assuming that $k$ satisfies Equation~\eqref{eq:L2-kernel-assumption}, we have
\begin{equation*}
  \left(\E\opnorm{\Delta}^2\right)^{1/2}
  \le C\left\{
    n\sqrt{\E [h(x_1)^2]}
    +\sqrt{n\,\E\opnorm{G}}
    +\sqrt{\log n}\left(\E [B_n^2]\right)^{1/2}
  \right\},
\end{equation*}
where $G$ is the correlation matrix with entries given by $G_{ij} := \E_z[k(x_i,z)k(z,x_j)]$ and we define a ``conditional-variance"-type term by $B_n^2:=\E_{z_1,\ldots,z_n}\max_{1\le j\le n}\sum_{i=1}^n\{k(z_j,x_i)-\E_xk(z_j,x)\}^2.$
\end{proposition}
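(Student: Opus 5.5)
We start from the Hoeffding decomposition $k=k_0+h(x)+h(y)-\theta$. On off-diagonal entries this gives
\[
\Delta=\Delta_0+(h\one^\top+\one h^\top-\theta\one\one^\top)-\diag(2h(x_i)-\theta),
\]
where $h=(h(x_i))_i$ and $\Delta_0$ is the off-diagonal matrix of $k_0$.

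\emph{The rank-two part.} It has operator norm at most $2\sqrt n\norm{h}+n|\theta|$. Since $\E\norm{h}^2=n\E h(x_1)^2$ and $\theta^2\le\E h(x_1)^2$, its $L^2$ norm is bounded by $Cn\sqrt{\E h(x_1)^2}$.

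\emph{The diagonal part.} It is bounded by $\max_i|h(x_i)|+|\theta|$. Its second moment is at most $n\E h^2$, so its contribution $\sqrt n\sqrt{\E h^2}$ is dominated by the previous term.

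This reduces the problem to bounding $(\E\opnorm{\Delta_0}^2)^{1/2}$ for the canonical kernel $k_0$.

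\emph{Decoupling.} For the canonical part we follow \cite{KRM25}. Since $\Delta_0$ is a degenerate U-statistic-type matrix, decoupling inequalities for convex functions of degenerate kernels (de la Pe\~na--Montgomery-Smith type, applied to the convex function $\opnorm{\cdot}^2$) give
\[
\E\opnorm{\Delta_0}^2\le C\,\E\opnorm{K}^2,
\]
where $K_{ij}=k_0(x_i,z_j)$ and $z_1,\dots,z_n$ is an independent copy. The diagonal entries $k_0(x_i,z_i)$ can be added back or removed at controlled cost, or handled by the same decoupled structure. We then replace $k_0$ by $k$ inside $K$ and subtract the pieces again. The matrix $(h(x_i)+h(z_j)-\theta)_{ij}$ is again rank at most three with norm $Cn\sqrt{\E h^2}$ in $L^2$. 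What remains is the matrix with entries $k(x_i,z_j)$, or better the conditionally centered matrix with entries $k(x_i,z_j)-\E_x k(x,z_j)=k(x_i,z_j)-h(z_j)$.

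\emph{Independent columns.} Conditional on $x_1,\dots,x_n$, the columns $a_j=(k(x_i,z_j))_i$ are independent. We apply Lemma~\ref{lem:independent-columns} conditionally with $N=n$, using $\sum_j\E_z[a_ja_j^\top]=nG$. The first term becomes $\sqrt{2n\opnorm G}$, and after taking expectations and applying Jensen it yields $\sqrt{n\E\opnorm G}$. For the second term we need $\E\max_j\norm{a_j}^2$. Here we use the centered columns $\tilde a_j=(k(x_i,z_j)-h(z_j))_i$. Their Gram covariance $n\E_z[\tilde a\tilde a^\top]$ differs from $nG$ by a rank-one term involving $\E_z[h(z)k(x_i,z)]$, which is again absorbed into the $h$ terms. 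Then $\E\max_j\norm{\tilde a_j}^2$ is exactly $\E B_n^2$ after reversing the roles of $x$ and $z$, which is legitimate by symmetry of the kernel and exchangeability of the two samples. This gives the term $\sqrt{\log n}\,(\E B_n^2)^{1/2}$.

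\emph{Main obstacle.} The delicate point is the bookkeeping when passing between $k$, $k_0$, and the column-centered kernel. Each rank-one correction must be shown to be controlled by $n\sqrt{\E h^2}$. For instance, the term $\E_z h(z)k(x_i,z)$ requires Cauchy--Schwarz; its contribution to the Gram covariance is bounded by $n\sqrt{\E h^2}\,\sqrt{\opnorm{G}}$-type quantities, which AM--GM absorbs into the two main terms. The justification of decoupling for the squared operator norm under only the $L^2$ assumption~\eqref{eq:L2-kernel-assumption} we take from the \cite{KRM25} argument.
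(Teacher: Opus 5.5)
Your proposal is correct and is essentially the paper's proof. It uses the same Hoeffding decomposition (rank-two and diagonal pieces bounded by $n\sqrt{\E[h(x_1)^2]}$), the same de la Pe\~na decoupling of the canonical part for the squared operator norm, and Lemma~\ref{lem:independent-columns} applied conditionally on $X$; the only difference is bookkeeping. You peel off the rank-one matrix $(h(x_i)-\theta)_{ij}$ before applying the column lemma, so the max-column term is exactly $\E[B_n^2]$, whereas the paper applies the lemma to the canonical columns and converts $G_0,B_{0,n}$ into $G,B_n$ afterwards.

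For your covariance step, the cleanest justification is $\E_z[\tilde a\tilde a^\top]\preceq 2G+2\,\E[h(z)^2]\,\one\one^\top$. If you use Cauchy--Schwarz on the cross term instead, it must be the variational bound $\norm{\E_z[h(z)a(z)]}\le\sqrt{\E[h(z)^2]\,\opnorm{G}}$. That gives a cross term of order $n^{3/2}\sqrt{\E[h(z)^2]\,\opnorm{G}}$, not the $n\sqrt{\E h^2}\sqrt{\opnorm{G}}$ you wrote, and AM--GM absorbs it into the two main terms. An entrywise bound through $\tr(G)$ would lose a factor $\sqrt n$, which AM--GM cannot absorb.
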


\begin{proof}
It will be convenient to first prove an upper bound in terms of the correlation matrix and conditional variance terms with respect to the canonical kernel $k_0$ defined above.
In particular, we will show the easier statement
\begin{equation}
  \left(\E\opnorm{\Delta}^2\right)^{1/2}
  \le C\left\{
    n\sqrt{\E [h(x_1)^2]}
    +\sqrt{(n-1)\,\E\opnorm{G_0}}
    +\sqrt{\log n}\left(\E [B_{0,n}^2]\right)^{1/2}
  \right\},
  \label{eq:canonical-kernel-bound}
\end{equation}
where we define $(G_0)_{ij} := \E_z[k_0(x_i,z)k_0(z,x_j)]$ and $B_{0,n}^2 := \E_{(z_1,\ldots,z_n)} \max_{1 \leq j \leq n} \sum_{i \neq j} k_0(x_i,z_j)^2$.
Denote $g(x):=h(x)-\theta$.  The off-diagonal Hoeffding decomposition gives us
\[
  \Delta=\theta(\one\one^\top-I_n)+P+\Delta_0,
\]
where $P$ and $\Delta_0$ are matrices defined such that $P_{ij}=\{g(x_i)+g(x_j)\}\mathbbm{1}\{i\ne j\}$ and $(\Delta_0)_{ij}=k_0(x_i,x_j)\mathbbm{1}\{i\ne j\}$.
Note that the matrix $\theta (\one \one^\top - I_n)$ is deterministic.
Writing \(g:=(g(x_1),\ldots,g(x_n))^\top\), we have
\[
  P=g\one^\top+\one g^\top-2\diag(g),
\]
and hence, pointwise,
\[
  \opnorm{P}
  \le2\sqrt n\,\norm{g}+2\norm{g}_\infty
  \le4\sqrt n\,\norm{g}.
\]
Therefore, we have 
\begin{align}
  \left\{\opnorm{\theta(\one\one^\top-I_n)}^2\right\}^{1/2}
  &\le n \E[h(x_1)],\label{eq:first-hoeffding-term}\\
  \left(\E\opnorm{P}^2\right)^{1/2}
  &\le4\sqrt n\left(\E\sum_{i=1}^n g(x_i)^2\right)^{1/2}
  \le4n\sqrt{\E [h(x_1)^2]}.\label{eq:second-hoeffding-term}
\end{align}
It remains to control the canonical kernel matrix $\Delta_0$.
For each ordered pair $i\ne j$, define the matrix-valued U-statistic
\[
  f_{ij}(u,v)=k_0(u,v)e_ie_j^\top.
\]
Observe that $\Delta_0 = \sum_{i \neq j} f_{ij}(x_i,x_j)$, and $\E\opnorm{f_{ij}(x_i,x_j)}^2
  =\E k_0(x_i,x_j)^2
  <\infty$.
Therefore, we can adapt the decoupling inequality from~\cite[Theorem 1]{deLaPena1992} with respect to the squared operator norm functional to obtain
\begin{equation}\label{eq:decouple-to-columns}
  \left(\E\opnorm{\Delta_0}^2\right)^{1/2}
  \le 8\left(\E\opnorm{A}^2\right)^{1/2},
\end{equation}
where we define $A := \sum_{i \neq j} f_{ij}(x_i,z_j)$ and $z_1,\ldots,z_n$ is an independent iid copy of the sample.

It remains to bound the decoupled matrix $A$. 
Conditioned on the original sample $X$, the columns $\{a_j\}_{j=1}^n$ of $A$ are independent.
Therefore, we can apply Lemma~\ref{lem:independent-columns} with respect to the conditional expectation on $X$.
To do so, we need to evaluate the terms $\sum_{j=1}^n \E_{z_j}[a_j a_j^\top \mid X]$ and $\E \max_{1 \leq j \leq n} \norm{a_j}^2$.
Denoting as shorthand $D_j=I_n-e_je_j^\top$, we have
\[
  \sum_{j=1}^n\E_{z_j}[a_ja_j^\top\mid X]
  =\sum_{j=1}^nD_jG_0D_j
  =(n-2)G_0+\diag(G_0).
\]
Since
$G_0\succeq0$ and $\opnorm{\diag(G_0)}\le\opnorm{G_0}$, we have $\opnorm{\sum_{j=1}^nD_jG_0D_j}
  \le (n-1)\opnorm{G_0}$.
Moreover, the entries of $a_j$ are given by $a_{ij} = k_0(x_i,z_j) \mathbbm{1}\{i \neq j\}$.
Recalling the definition of $B_{0,n}^2$ directly gives us
\[
  \E_{z_1,\ldots,z_n}\max_j\norm{a_j}^2=B_{0,n}^2.
\]
Therefore, applying Lemma~\ref{lem:independent-columns} yields
\[
  \left\{
    \E_{z_1,\ldots,z_n}\left[\opnorm{A}^2\mid X\right]
  \right\}^{1/2}
  \le
  \sqrt{(n-1)\opnorm{G_0}}
  +C\sqrt{\log n} \cdot B_{0,n}.
\]
Using the identity $(a+b)^2 \leq 2(a^2 + b^2)$, taking the outer expectation over the original sample $X$ and using the subadditivity of the square root function gives us
\begin{align}\label{eq:third-hoeffding-term}
  \left(\E\opnorm{A}^2\right)^{1/2}
  \le
  \sqrt{(n-1)\,\E\opnorm{G_0}}
  +C\sqrt{\log n}\left(\E B_{0,n}^2\right)^{1/2}.
\end{align}
Combining Equations~\eqref{eq:first-hoeffding-term},~\eqref{eq:second-hoeffding-term} and~\eqref{eq:third-hoeffding-term} together with Minkowski's inequality on $L^2$ proves Equation~\eqref{eq:canonical-kernel-bound}.

Finally, as in the proof of~\cite[Theorem 1]{KRM25}, we adapt the right-hand side of Equation~\eqref{eq:canonical-kernel-bound} to express it in terms of the uncentered quantities $G$ and $B_n$ to complete the proof of Proposition~\ref{prop:offdiag-krm}.
The proof of Theorem~1 of~\cite{KRM25} shows that $\E\opnorm{G - G_0} = n \E[h(x_1)^2]$, and applying the triangle inequality gives
\begin{equation}
  \E\opnorm{G_0}
  \le \E\opnorm{G}+n\E h(x_1)^2.
  \label{eq:G0-to-G}
\end{equation}
Moreover, the identity $k_0(z_j,x_i)
  =\{k(z_j,x_i)-\E_xk(z_j,x)\}-g_i$ and the fact that $(a+b)^2 \leq 2(a^2 + b^2)$ gives us
\begin{equation}
  B_{0,n}^2\le2B_n^2+2\sum_{i=1}^ng_i^2
  \label{eq:B0-to-B}
\end{equation}
pointwise on $X$.
Taking the outer expectation over $X$ on both sides of Equation~\eqref{eq:B0-to-B}, and then using square-root subadditivity, and combining with Equation~\eqref{eq:G0-to-G} yields the terms
\[
  C\sqrt{(n-1)\,\E\opnorm{G}}
  +Cn\sqrt{\E [h(x_1)^2]}
  +C\sqrt{\log n}(\E [B_n^2])^{1/2}
  +C\sqrt{n\log n}\sqrt{\E [h(x_1)^2]}.
\]
Since $\log n\le n$ for $n\ge2$, the last term is absorbed by the
$Cn\sqrt{\E h(x_1)^2}$ term.
Substituting these into Equation~\eqref{eq:canonical-kernel-bound} completes the proof of the proposition.
\end{proof}

\begin{remark}
While the proof of Proposition~\ref{prop:offdiag-krm} is directly inspired by and reminiscent of the proof of~\cite[Theorem 1]{KRM25}, a brief comparison between the results is in order.
As already noted, because Proposition~\ref{prop:offdiag-krm} controls the operator norm squared, a stronger $L^2$ integrability assumption on the kernel is required.
We now compare the terms in the two bounds.
Define
\[
  \tilde B_n^2
  =n\,\E_{z,x_1,\ldots,x_n}
    \max_{1\le i\le n}
    \{k(z,x_i)-\E_xk(z,x)\}^2.
\]
Apart from a separate diagonal term, the right-hand side in the bound of~\cite[Theorem 1]{KRM25} is comprised of the three terms
\[
  n\sqrt{\log n\,\E h(x_1)^2},
  \qquad
  \sqrt{n\log n\,\E\opnorm{G}},
  \qquad
  \log n\,\tilde B_n.
\]
By comparison, the right-hand side in the bound of Proposition~\ref{prop:offdiag-krm} is comprised of the three terms
\[
  n\sqrt{\E h(x_1)^2},
  \qquad
  \sqrt{n\,\E\opnorm{G}},
  \qquad
  \sqrt{\log n}\,(\E B_n^2)^{1/2}.
\]
Proposition~\ref{prop:offdiag-krm} removes the logarithmic factors for the first two terms.
However, the third terms expressed in terms of $\tilde B_n$ and $B_n$ are similar, but not directly comparable.
\end{remark}

\subsection{Quadratic Hermite term}\label{sec:quadratic-hermite}

In this section, we control the quadratic Hermite term, i.e. $\opnorm{H \Delta^{(2)} H}$.
Without double-centering, the quadratic term $\opnorm{\Delta^{(2)}}$ is not sufficiently small.  To see the obstruction, note that we may assume 
\(\Sigma=\diag(\lambda_1,\ldots,\lambda_d)\) without loss of generality by the rotational invariance of the isotropic Gaussian random vector.
Then, we have
\(\sum_{s=1}^d\lambda_s^2=\tau_2\), and we write \(x_i=\Sigma^{1/2}w_i\), where
\(w_i\sim N(0,I_d)\). 
The matrix $\Delta^{(2)}$ has entries given by
\[
  \Delta^{(2)}_{ij}
  =
  \left(\frac{\ip{x_i}{x_j}^2}{\tau_2}-1\right)\mathbbm{1}\{i\ne j\}
  =
  \left\{
    \frac1{\tau_2}\sum_{s,t}\lambda_s\lambda_t
    (w_i)_s(w_i)_t(w_j)_s(w_j)_t-1
  \right\}\mathbbm{1}\{i\ne j\}.
\]
Although each off-diagonal entry is \emph{unconditionally} centered, it is not \emph{conditionally}
centered given \(x_i\).  Indeed, we have
\[
  \E_{x_j}\left[\Delta^{(2)}_{ij}\,\middle|\,x_i\right]
  =
  \frac{x_i^\top\Sigma x_i}{\tau_2}-1 =: h_i ,
  \qquad j\ne i,
\]
and $h_i$ has variance
\[
  \frac{\Var(x_i^\top\Sigma x_i)}{\tau_2^2}
  =
  \frac{2\tr(\Sigma^4)}{\tr(\Sigma^2)^2}
  =
  \frac{2}{\effectivedim}.
\]
This conditional mean calculation yields a lower bound on $\opnorm{\Delta^{(2)}}$.  
Conditional on \(x_i\), the \(n-1\) summands in
\((\Delta^{(2)}\one)_i\) are independent and have common mean \(h_i\).
Jensen's inequality therefore gives
\[
  \E\!\left[
    |(\Delta^{(2)}\one)_i|\,\middle|\,x_i
  \right]
  \ge (n-1)|h_i|.
\]
Moreover, Gaussian
hypercontractivity \cite[Chapter~5]{Janson1997} gives $\|h_i\|_{L^4}
  \le 3\|h_i\|_{L^2}$, and H\"older's inequality gives $\|h_i\|_{L^2}^2
  \le
  \|h_i\|_{L^1}^{2/3}
  \|h_i\|_{L^4}^{4/3}$.
Consequently, we have
\[
  \|h_i\|_{L^1}
  \ge
  \frac{\|h_i\|_{L^2}^3}{\|h_i\|_{L^4}^2}
  \ge
  \frac{1}{9}\|h_i\|_{L^2}
  =
  \frac{\sqrt{2}}{9\sqrt{\effectivedim}}.
\]
Finally, we have
\[
  \opnorm{\Delta^{(2)}}
  \ge \frac{\|\Delta^{(2)}\one\|}{\sqrt n}
  \ge \frac1n\sum_{i=1}^n|(\Delta^{(2)}\one)_i|.
\]
Combining the above displays gives the lower bound
\begin{equation}
  \E\opnorm{\Delta^{(2)}}
  \ge \frac{cn}{\sqrt{\effectivedim}}.
  \label{eq:quadratic-lower}
\end{equation}
To compare this lower bound with the signal from the estimand $G := XX^\top/\sqrt{\tau_2}$, recall that the Gram matrix satisfies
\[
  \left\|\frac{XX^\top}{\sqrt{\tau_2}}\right\|_{\op}
  =O_{\Pp} \left(
  \frac{n\mu}{\sqrt{\tau_2}} \right)
  =
  \frac{n}{\sqrt{\stablerank}}
\]
by Equation~\eqref{eq:input-gaussian-matrix-bound}.
Therefore, the noise and the signal are comparable in magnitude whenever
\(\effectivedim\asymp \stablerank\), which results in failure of the uncentered spectral method.
Note that this is satisfied in the case where $\Sigma$ is well-conditioned.
Double-centering removes this obstruction.  The next proposition controls the double-centered quadratic Hermite term.

\begin{proposition}
\label{prop:quadratic-centered}
Let \(x_i\overset{\mathrm{iid}}{\sim}N(0,\Sigma)\), where
\(\Sigma\succ0\).  Define
\[
  \Delta^{(2)}_{ij}
  =
  \left(\frac{\ip{x_i}{x_j}^2}{\tau_2}-1\right)\mathbbm{1}\{i\ne j\}.
\]
Then, we have
\begin{equation*}
  \left(\E\opnorm{H\Delta^{(2)}H}^2\right)^{1/2}
  \le C\left\{
    \frac{n}{\stablerank}
    +\mathcal B_n\sqrt{\frac{n\log n}{\tau_2 \stablerank}}
    +\eta_n
  \right\},
\end{equation*}
where $\mathcal B_n$ and $\eta_n$ were defined in Equation~\eqref{eq:proof-abbreviations}.
\end{proposition}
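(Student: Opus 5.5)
The plan is to split $\Delta^{(2)}$ by its Hoeffding decomposition, with kernel $k(x,y)=\ip{x}{y}^2/\tau_2-1$. The double-centering $H$ then annihilates most of the first-order part, and the canonical part is handled by Proposition~\ref{prop:offdiag-krm} together with Lemma~\ref{lem:quadratic-feature-concentration}.

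\textbf{Hoeffding decomposition.} The first-order projection is $h(x)=\E_y k(x,y)=x^\top\Sigma x/\tau_2-1$, so $h(x_i)=\delta_i$ in the notation of Lemma~\ref{lem:scale-correlation-bounds}, and $\theta=\E h=0$. The canonical kernel is
\[
  k_0(x,y)=\frac{\ip{x}{y}^2-x^\top\Sigma x-y^\top\Sigma y+\tau_2}{\tau_2}=\frac{\tr(C_xC_y)}{\tau_2},
\]
with $C_x=xx^\top-\Sigma$ as in Lemma~\ref{lem:quadratic-feature-concentration}. Writing $\delta=(\delta_1,\ldots,\delta_n)^\top$, this gives exactly
\[
  \Delta^{(2)}=\delta\one^\top+\one\delta^\top-2\diag(\delta)+\Delta_0,
  \qquad
  (\Delta_0)_{ij}=\frac{\tr(C_{x_i}C_{x_j})}{\tau_2}\mathbbm{1}\{i\ne j\}.
\]

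\textbf{The first-order part.} Since $H\one=0$, the rank-two part $\delta\one^\top+\one\delta^\top$ is killed, and
\[
  H\Delta^{(2)}H=-2H\diag(\delta)H+H\Delta_0H .
\]
For the first matrix, $\opnorm{H}\le1$ gives $\opnorm{H\diag(\delta)H}\le\max_i|\delta_i|$. By \eqref{eq:delta-explicit-moments} with $p=2$, its $L^2$ norm is at most $C\eta_n$, which is the $\eta_n$ term in the statement. This is the step where double-centering is essential: the removed rank-two piece $\delta\one^\top$ has operator norm of order $\sqrt n\,\|\delta\|\approx n/\sqrt{\effectivedim}$, matching the lower bound \eqref{eq:quadratic-lower}.

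\textbf{The canonical part.} For $H\Delta_0H$, I use $\opnorm{H\Delta_0H}\le\opnorm{\Delta_0}$ and apply Proposition~\ref{prop:offdiag-krm} to the kernel $k_0$. This kernel has zero first-order projection, so the $n\sqrt{\E h^2}$ term vanishes. The required inputs were already computed in the proof of Lemma~\ref{lem:quadratic-feature-concentration}, up to the factor $\tau_2^{-1}$ per kernel:
\[
  G=\frac{2}{\tau_2^2}Y(\Sigma\otimes\Sigma)Y^\top,
  \qquad
  \E\opnorm{G}\le\frac{2\mu^2}{\tau_2^2}\E\opnorm{Y^\top Y}\le\frac{C\mu^2}{\tau_2^2}\bigl(n\mu^2+\mathcal B_n^2+\mu\mathcal B_n\sqrt{n\log n}\bigr),
\]
by \eqref{eq:input-quadratic-feature-expectation}. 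By \eqref{eq:quadratic-feature-column-bound}, $\E B_n^2\le Cn\mu^2\mathcal B_n^2/\tau_2^2$.

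Plugging these in and using $\mu^2/\tau_2=1/\stablerank$, the term $\sqrt{n\E\opnorm{G}}$ splits into three pieces:
\begin{itemize}
  \item $n\mu^2/\tau_2=n/\stablerank$;
  \item $\sqrt n\,\mu\mathcal B_n/\tau_2=\mathcal B_n\sqrt{n/(\tau_2\stablerank)}$;
  \item a cross term $\sqrt{n\mu^3\mathcal B_n\sqrt{n\log n}}/\tau_2$, which by AM--GM is at most the sum of the first two pieces (the second with an extra $\sqrt{\log n}$).
\end{itemize}
The term $\sqrt{\log n}\,(\E B_n^2)^{1/2}$ gives $\mathcal B_n\sqrt{n\log n/(\tau_2\stablerank)}$. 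Together with the $\eta_n$ term and Minkowski's inequality in $L^2$, this yields the claimed bound.

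\textbf{Expected obstacle.} The main obstacle is essentially already absorbed into Lemma~\ref{lem:quadratic-feature-concentration}, namely the self-bounding control of $\E\opnorm{Y^\top Y}$. What remains is to verify two things carefully:
\begin{itemize}
  \item the exact algebraic identity between the canonical kernel $k_0$ and $\tr(C_xC_y)/\tau_2$, including the diagonal correction $-2\diag(\delta)$, which survives the centering and must be bounded separately;
  \item that the normalization by $\tau_2$ enters $G$ quadratically but $B_n$ only linearly inside the square root.
\end{itemize}
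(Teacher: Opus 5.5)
Your proposal is correct and follows the paper's proof essentially step for step. The paper writes $\Delta^{(2)}$ as the canonical kernel matrix with entries $\tr(C_{x_i}C_{x_j})/\tau_2$ plus the correction $r\one^\top+\one r^\top-2\diag(r)$, where $r_i=x_i^\top\Sigma x_i/\tau_2-1$, uses $H\one=0$ to remove the rank-two part, and bounds the surviving diagonal term by $C\eta_n$. It then controls the canonical part through Proposition~\ref{prop:offdiag-krm}, with $\bar G=2\tau_2^{-2}Y(\Sigma\otimes\Sigma)Y^\top$ and the column bound of Lemma~\ref{lem:quadratic-feature-concentration}, finishing with the same square-root subadditivity and AM--GM simplification you describe.
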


\begin{proof}
We decompose the quadratic kernel into its canonical part, the two rank-one
contributions generated by its first-order Hoeffding projection, and a
diagonal correction.  Set
\[
  a_i=\frac{x_i^\top\Sigma x_i}{\tau_2},
  \qquad
  r_i=a_i-1,
  \qquad
  R_r=\diag(r_1,\ldots,r_n),
\]
and define the off-diagonal degenerate quadratic matrix $\bar \Delta^{(2)}$ whose entries are given by
\[
  \bar\Delta^{(2)}_{ij}
  =
  \left(
    \frac{\ip{x_i}{x_j}^2}{\tau_2}
    -a_i-a_j+1
  \right)\mathbbm{1}\{i\ne j\}.
\]
For fixed \(x\), the conditional expectation over \(z\sim N(0,\Sigma)\) of
\[
  \bar k(x,z)
  =
  \frac{\ip{x}{z}^2}{\tau_2}
  -\frac{x^\top\Sigma x}{\tau_2}
  -\frac{z^\top\Sigma z}{\tau_2}
  +1
\]
is zero.  Also, we have
\[
  \Delta^{(2)}-\bar\Delta^{(2)}
  =
  r\one^\top+\one r^\top-2R_r .
\]
Since \(H\one=0\), double-centering eliminates the two rank-one
contributions by the first-order Hoeffding projection. This gives us
\begin{equation}
  H\Delta^{(2)}H
  =
  H\bar\Delta^{(2)}H-2HR_rH .
  \label{eq:quadratic-hoeffding-term-removal}
\end{equation}
It remains only to control the canonical matrix $\bar \Delta^{(2)}$ and the diagonal correction $R_r$.

For the last term in \eqref{eq:quadratic-hoeffding-term-removal}, since \(H\) is an orthogonal projection, we have
\[
  \opnorm{HR_rH}
  \le \opnorm{R_r}
  =\max_{i\le n}|a_i-1|.
\]
Equation~\eqref{eq:delta-explicit-moments} with \(p=2\) therefore gives
\[
  \left(\E\opnorm{HR_rH}^2\right)^{1/2}
  \le
  \left\{\E\max_{i\le n}|a_i-1|^2\right\}^{1/2}
  \le C\eta_n.
\]
We now turn to \(\bar\Delta^{(2)}\), for which we apply Proposition~\ref{prop:offdiag-krm}.  Let
\(C_x=xx^\top-\Sigma\).  Since
\(\tau_2=\tr(\Sigma^2)\), for \(i\ne j\) we have
\[
  \tr(C_{x_i}C_{x_j})
  =
  \ip{x_i}{x_j}^2-x_i^\top\Sigma x_i-x_j^\top\Sigma x_j+\tau_2.
\]
Consequently, \(\bar\Delta^{(2)}\) is the off-diagonal kernel matrix
associated with
\[
  \bar k(x,z):=\frac{\tr(C_xC_z)}{\tau_2}.
\]
Its first-order Hoeffding projection vanishes because
\[
  \E_z\bar k(x,z)
  =
  \frac{1}{\tau_2}\tr\!\left(C_x\E_zC_z\right)
  =0.
\]
We next bound the correlation matrix term in
Proposition~\ref{prop:offdiag-krm}. Let \(Y\) be the feature matrix in
Lemma~\ref{lem:quadratic-feature-concentration}. 
Then we have
\[
  \begin{aligned}
  \bar G_{ij}
  &:=
  \E_z\bigl[\bar k(x_i,z)\bar k(z,x_j)\bigr]
  =
  \frac{2}{\tau_2^2}\tr(C_{x_i}\Sigma C_{x_j}\Sigma),
  \end{aligned}
  \qquad
  \bar G
  =
  \frac{2}{\tau_2^2}Y(\Sigma\otimes\Sigma)Y^\top.
\]
Since
\(\opnorm{\Sigma\otimes\Sigma}=\mu^2\),
Lemma~\ref{lem:quadratic-feature-concentration} yields
\[
  \begin{aligned}
  \E\opnorm{\bar G}
  &\le
  \frac{2\mu^2}{\tau_2^2}\E\opnorm{Y^\top Y}
  \le
  \frac{C\mu^2}{\tau_2^2}
  \left\{
    n\mu^2+\mathcal B_n^2+\mu\mathcal B_n\sqrt{n\log n}
  \right\}.
  \end{aligned}
\]
Because the first-order Hoeffding projection vanishes, the quantity $B_n^2$ in
Proposition~\ref{prop:offdiag-krm} is
\[
  B_n^2
  =
  \E_{z_1,\ldots,z_n}
  \max_{1\le j\le n}
  \sum_{i=1}^n\bar k(x_i,z_j)^2.
\]
Therefore, after averaging over \(x_1,\ldots,x_n\),
\eqref{eq:quadratic-feature-column-bound} gives
\[
  \E B_n^2
  =
  \frac{1}{\tau_2^2}
  \E\max_{1\le j\le n}\sum_{i=1}^n\tr(C_{x_i}C_{z_j})^2
  \le
  \frac{Cn\mu^2\mathcal B_n^2}{\tau_2^2}.
\]
Proposition~\ref{prop:offdiag-krm} now gives
\[
  \left(\E\opnorm{\bar\Delta^{(2)}}^2\right)^{1/2}
  \le
  C\left\{
    \sqrt{n\,\E\opnorm{\bar G}}
    +\sqrt{\log n}\left(\E B_n^2\right)^{1/2}
  \right\}.
\]
Substituting the preceding two estimates yields
\[
  \left(\E\opnorm{\bar\Delta^{(2)}}^2\right)^{1/2}
  \le
  \frac {C\mu}{\tau_2}
  \left\{
    \sqrt{n\{n\mu^2+\mathcal B_n^2
      +\mu\mathcal B_n\sqrt{n\log n}\}}
    +\mathcal B_n\sqrt{n\log n}
  \right\}.
\]
By square-root subadditivity and the arithmetic--geometric mean inequality, we have
\[
  \left(\E\opnorm{\bar\Delta^{(2)}}^2\right)^{1/2}
  \le C\left\{
    \frac{n\mu^2}{\tau_2}
    +\frac{\mu\mathcal B_n}{\tau_2}\sqrt{n\log n}
  \right\}
  =C\left\{
    \frac{n}{\stablerank}
    +\mathcal B_n\sqrt{\frac{n\log n}{\tau_2 \stablerank}}
  \right\}.
\]
Combining the above two estimates with Equation~\eqref{eq:quadratic-hoeffding-term-removal} completes the proof of the
proposition.
\end{proof}

\subsection{Cubic Hermite term}\label{sec:cubic-hermite}

In this section, we control the cubic Hermite term $\opnorm{\Delta^{(3)}}$.
\begin{proposition}
\label{prop:krm-fixed-degree}
Define the off-diagonal cubic Hermite matrix
\[
  \Delta^{(3)}_{ij}
  =
  \He_3\!\left(\frac{\ip{x_i}{x_j}}{\sqrt{\tau_2}}\right)\mathbbm{1}\{i\ne j\}.
\]
Recall that we defined \(\kappa_3\) in Equation~\eqref{eq:proof-abbreviations}. Then, we have
\begin{equation}
  \left(\E\opnorm{\Delta^{(3)}}^2\right)^{1/2}
  \le C_3\left\{\sqrt{n(1+\eta_n^3)\log n}+n\sqrt{\kappa_3}\right\}.
  \label{eq:krm-hermite}
\end{equation}
\end{proposition}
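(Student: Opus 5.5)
Apply Proposition~\ref{prop:offdiag-krm} to the symmetric kernel $k(x,z)=\He_3(\ip{x}{z}/\sqrt{\tau_2})$, then bound the three terms it produces. Since $\He_3(u)=u^3-3u$, we have $k(x,z)=\ip{x}{z}^3/\tau_2^{3/2}-3\ip{x}{z}/\sqrt{\tau_2}$. This is an odd function of $z$, so the Hoeffding projection satisfies $h(x)=\E_z k(x,z)=0$. The first term of Proposition~\ref{prop:offdiag-krm} therefore vanishes, and $B_n^2$ reduces to $\E_{z}\max_j\sum_i k(x_i,z_j)^2$. Square integrability is clear because the kernel is a polynomial in Gaussian inner products.

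For the correlation matrix $G_{ij}=\E_z[k(x_i,z)k(x_j,z)]$, I would compute it exactly by Isserlis' formula, using $\ip{x}{z}^3=\ip{x^{\otimes3}}{z^{\otimes3}}$. The cubic--cubic part gives
\[
\frac{1}{\tau_2^3}\Bigl(6(x_i^\top\Sigma x_j)^3+9(x_i^\top\Sigma x_i)(x_j^\top\Sigma x_j)(x_i^\top\Sigma x_j)\Bigr).
\]
The cross terms give multiples of $(x_i^\top\Sigma x_i)(x_i^\top\Sigma x_j)/\tau_2^2$ and $(x_j^\top\Sigma x_j)(x_i^\top\Sigma x_j)/\tau_2^2$, and the linear--linear part gives $x_i^\top\Sigma x_j/\tau_2$. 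With $a_i=x_i^\top\Sigma x_i/\tau_2$, the terms linear in $x_i^\top\Sigma x_j$ combine to
\[
\frac{x_i^\top\Sigma x_j}{\tau_2}\bigl(9a_ia_j-9a_i-9a_j+9\bigr)=9\,\frac{x_i^\top\Sigma x_j}{\tau_2}\,r_ir_j, \qquad r_i=a_i-1.
\]
This is the cancellation produced by the $-3u$ in $\He_3$. So
\[
G=\frac{6}{\tau_2^3}\bigl(X\Sigma X^\top\bigr)^{\circ3}+\frac{9}{\tau_2}R_rX\Sigma X^\top R_r .
\]
The second term has operator norm at most $\tfrac{9}{\tau_2}\max_i r_i^2\,\mu\opnorm{X}^2$. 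By Lemmas~\ref{lem:gaussian-concentration-inputs} and~\ref{lem:scale-correlation-bounds} this is roughly $\eta_n^2\, n\mu^2/\tau_2=\eta_n^2 n/\stablerank$, which is presumably absorbed.

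For the Hadamard cube, write $(X\Sigma X^\top)^{\circ3}=\Phi\Phi^\top$ with $\Phi$ having rows $(\Sigma^{1/2}x_i)^{\otimes3}$; this is where the tensor algebra enters. Then split it into diagonal and off-diagonal parts. The diagonal is at most $\max_i a_i^3\,\tau_2^3$, which yields the $1+\eta_n^3$ factor after scaling. The off-diagonal part is of order $n\kappa_3\tau_2^3$ in expectation. I would bound it either with a second application of Proposition~\ref{prop:offdiag-krm}, using a self-bounding argument as in Lemma~\ref{lem:quadratic-feature-concentration}, or more crudely via $\opnorm{\Phi^\top\Phi}$ computed through $\E[(\Sigma^{1/2}x)^{\otimes3}(\Sigma^{1/2}x)^{\otimes3\top}]$, whose norm involves $\tau_2\mu^4$ and $\mu^6$ (hence $\kappa_3=1/(\stablerank\effectivedim)+1/\stablerank^3$). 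The target is
\[
\E\opnorm{G}\lesssim 1+\eta_n^3+n\kappa_3,
\]
so that $\sqrt{n\E\opnorm{G}}\lesssim\sqrt{n(1+\eta_n^3)}+n\sqrt{\kappa_3}$.

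For $B_n$, condition on $z_j$. Then $\ip{x_i}{z_j}/\sqrt{\tau_2}$ is Gaussian with variance $a(z_j)=z_j^\top\Sigma z_j/\tau_2$, so $\E_x k(x,z_j)^2\lesssim 1+a(z_j)^3$, and $k^2$ has polynomial moments controlled by hypercontractivity. Rosenthal's inequality with $p=\lceil\log n\rceil$, as in the proof of Lemma~\ref{lem:quadratic-feature-concentration}, together with a maximum over $j$ and~\eqref{eq:max-ai-cubic}, gives $\E B_n^2\lesssim n(1+\eta_n^3)$. This contributes $\sqrt{n(1+\eta_n^3)\log n}$. The main obstacle I expect is the off-diagonal Hadamard-cube bound on $\E\opnorm{G}$ at exactly the scale $n\kappa_3$: one must avoid losing the log factors and correctly track the cross terms $\tau_2\mu^4$ versus $\mu^6$.
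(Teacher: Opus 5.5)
The overall structure matches the paper. You apply Proposition~\ref{prop:offdiag-krm} with $h\equiv0$, and your Isserlis computation giving $G_{ij}=6\ip{u_i}{u_j}^3+9\delta_i\delta_j\ip{u_i}{u_j}$, with $u_i=\Sigma^{1/2}x_i/\sqrt{\tau_2}$, is correct and agrees with the paper. Your Rosenthal bound for $B_n$ is exactly Lemma~\ref{lem:cubic-column-norm}. The gap is in bounding $\E\opnorm{G}$, which is where the $n\sqrt{\kappa_3}$ term comes from. Neither of its two pieces is handled at the right scale.

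\textbf{Radial--linear piece.} Let $U$ be the matrix with rows $u_i^\top$. The bound $\opnorm{R_rUU^\top R_r}\le\max_i r_i^2\,\opnorm{U}^2$ gives at best $\eta_n^2(n/\stablerank+1)$, and this is not absorbed. Indeed, $\eta_n^2/\stablerank\asymp\log n/(\stablerank\effectivedim)+(\log n)^2/\stablerank^3$, which is between $\log n$ and $(\log n)^2$ times $\kappa_3$. For $\Sigma=I_d$ with $\log n\ll d\ll\sqrt n$, your route yields $n\sqrt{\log n}/d$ in the final bound instead of $n/d+\sqrt{n\log n}$. Routing through $\mu\opnorm{X}^2/\tau_2$, as you do, is worse still, since it also brings in $\mu\tau_1/\tau_2$. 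The loss comes from pulling out $\max_i\delta_i^2\asymp\eta_n^2$, whereas a typical $\delta_i^2$ is only of order $1/\effectivedim$.

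The fix is to observe that $R_rU$ has i.i.d.\ rows $\delta_iu_i=\psi(u_i)/3$ and to apply Lemma~\ref{lem:independent-columns} to it. The population covariance $\E[\delta^2uu^\top]$ is diagonal in the eigenbasis of $\Sigma$, with entries $\frac{\lambda_k^2}{\tau_2}\cdot\frac{2\tau_4+8\lambda_k^4}{\tau_2^2}\lesssim\kappa_3$. The maximal norm $\E\max_i\norm{\psi(u_i)}^2\lesssim1+\eta_n^3$ then enters only with a factor $\log n$, not $n$.

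\textbf{Hadamard cube.} Your second option, going through the population covariance of $u^{\otimes3}$, is the right route. However, you leave the decisive estimate open, and the scale you name is off: $\tau_2\mu^4/\tau_2^3=1/\stablerank^2$, which would only give $n/\stablerank$. Two things are needed.
\begin{itemize}
\item Apply Lemma~\ref{lem:independent-columns} to $\Phi$, whose rows are $u_i^{\otimes3}$. No diagonal/off-diagonal split is needed, because the resulting $(1+\eta_n^3)\log n$ term is affordable.
\item Prove $\E\ip{T}{u^{\otimes3}}^2\lesssim\kappa_3\fnorm{T}^2$ for symmetric $T$. This follows from the Wick decomposition $\ip{T}{u^{\otimes3}}=\ip{T}{{:u^{\otimes3}:}}+3\ip{c_T}{u}$, where $(c_T)_k=\tau_2^{-1}\sum_{a,b}T_{abk}(\Sigma^2)_{ab}$.
\end{itemize}
In that decomposition, the third-chaos part contributes at most $6\opnorm{\Sigma^2/\tau_2}^3\fnorm{T}^2=6\fnorm{T}^2/\stablerank^3$. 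The orthogonal first-chaos part contributes at most $9\norm{c_T}^2/\stablerank$. Cauchy--Schwarz gives $\norm{c_T}^2\le\tau_4\fnorm{T}^2/\tau_2^2$, so this is at most $9\fnorm{T}^2/(\stablerank\effectivedim)$. Thus the relevant quantities are $\mu^6$ and $\tau_4\mu^2$, not $\tau_2\mu^4$.

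With both pieces in place, $\E\opnorm{G}\lesssim n\kappa_3+(1+\eta_n^3)\log n$, which is exactly what the proposition requires.
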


\begin{proof}
We apply Proposition~\ref{prop:offdiag-krm} to the degree-3 Hermite kernel
\[
  k(x,y)=\He_3\!\left(\frac{\ip{x}{y}}{\sqrt{\tau_2}}\right).
\]
For fixed \(x\), the variable \(\ip{x}{z}/\sqrt{\tau_2}\) is a zero-mean Gaussian.  Since \(\He_3(u)=u^3-3u\), we have
\[
  \E_z k(x,z)=0,
\]
so the conditional mean term in Proposition~\ref{prop:offdiag-krm}
vanishes.   
Define the correlation matrix in that proposition by
\begin{equation}
  G_{ij}
  :=
  \E_z\left[
    \He_3\!\left(\frac{\ip{x_i}{z}}{\sqrt{\tau_2}}\right)
    \He_3\!\left(\frac{\ip{z}{x_j}}{\sqrt{\tau_2}}\right)
  \right].
\label{eq:def-G-cubic}
\end{equation}
Lemma~\ref{lem:cubic-correlation-matrix} below gives
\[
  \E\opnorm{G}
  \le C\left\{n\kappa_3+(1+\eta_n^3)\log n\right\}.
\]
It remains to control the $B_n$ term in Proposition~\ref{prop:offdiag-krm}.  Let
\(z_1,\ldots,z_n\) be iid copies of
\(z\), independent of \(x_1,\ldots,x_n\).  In the present setting,
\[
  B_n^2
  =
  \E_{z_1,\ldots,z_n}
  \max_{1\le j\le n}\sum_{i=1}^n
  \left\{
    \He_3\!\left(\frac{\ip{z_j}{x_i}}{\sqrt{\tau_2}}\right)
    -
    \E_x\He_3\!\left(\frac{\ip{z_j}{x}}{\sqrt{\tau_2}}\right)
  \right\}^2.
\]
Conditional on
\(z_1,\ldots,z_n\), each \(\ip{z_j}{x_i}/\sqrt{\tau_2}\) is Gaussian with variance
\(a_{z_j}=z_j^\top\Sigma z_j/\tau_2\).  Since \(|\He_3(u)|\le C(1+|u|^3)\),
the column sum estimate in 
Lemma~\ref{lem:cubic-column-norm} gives
\[
  \E B_n^2
  \le Cn(1+\eta_n^3).
\]
Substituting the correlation and column estimates into
Proposition~\ref{prop:offdiag-krm} yields
\[
  \left(\E\opnorm{\Delta^{(3)}}^2\right)^{1/2}
  \le
  C\left\{
    \sqrt{n\left\{n\kappa_3+(1+\eta_n^3)\log n\right\}}
    +\sqrt{n(1+\eta_n^3)\log n}
  \right\},
\]
which is precisely Equation~\eqref{eq:krm-hermite}.  
This completes the proof of the proposition.
\end{proof}

It remains to prove Lemmas~\ref{lem:cubic-correlation-matrix} and~\ref{lem:cubic-column-norm}, which we do below.
\begin{lemma}
\label{lem:cubic-correlation-matrix}
For an independent \(z\sim N(0,\Sigma)\), define $G \in \R^{n \times n}$ by Equation~\eqref{eq:def-G-cubic}.
Then, we have
Then
\[
  \E\opnorm{G}
  \le
  C\left\{n\kappa_3+(1+\eta_n^3)\log n\right\}.
\]
\end{lemma}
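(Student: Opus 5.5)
The plan starts from an exact formula for $G$ obtained by Gaussian conditioning. For fixed $x_i,x_j$, the pair $(U_i,U_j):=(\ip{x_i}{z},\ip{x_j}{z})/\sqrt{\tau_2}$ is a centered Gaussian vector with variances $a_i=x_i^\top\Sigma x_i/\tau_2$, $a_j$ and covariance $c_{ij}:=x_i^\top\Sigma x_j/\tau_2$. Expanding $\He_3(u)=u^3-3u$ and using Isserlis' formula ($\E U^3V^3=9abc+6c^3$, $\E U^3V=3ac$, $\E UV=c$) gives
\[
  G_{ij}=6c_{ij}^3+9\,c_{ij}(a_i-1)(a_j-1)\qquad\text{for all } i,j .
\]
Writing $y_i:=\Sigma^{1/2}x_i\sim N(0,\Sigma^2)$ and $r_i=a_i-1$ as in the proof of Proposition~\ref{prop:quadratic-centered}, this becomes
\[
  G=6\,VV^\top+9\,WW^\top,\qquad
  V_i=\frac{y_i^{\otimes3}}{\tau_2^{3/2}}\in\R^{d^3},\quad
  W_i=\frac{r_i\,y_i}{\sqrt{\tau_2}}\in\R^{d}.
\]
Both pieces are Gram matrices of independent random vectors, so it suffices to bound $\E\opnorm{V^\top V}$ and $\E\opnorm{W^\top W}$ separately.

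For each piece I would apply Lemma~\ref{lem:independent-columns} with the columns $V_i^\top$ (respectively $W_i^\top$). Squaring its conclusion gives
\[
  \E\opnorm{V}^2\le 4n\,\opnorm{\E V_1V_1^\top}+C\log n\;\E\max_i\norm{V_i}^2 ,
\]
and likewise for $W$. The maximal terms are direct. We have $\norm{V_i}^2=(y_i^\top y_i)^3/\tau_2^3=a_i^3$ and $\norm{W_i}^2=r_i^2a_i\le C(1+a_i^3)$. Hence Equation~\eqref{eq:max-ai-cubic} in Lemma~\ref{lem:scale-correlation-bounds} bounds both maximal terms by $C(1+\eta_n^3)\log n$, which produces the second term of the claimed bound.

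The population terms should produce $n\kappa_3$. For $V$, the moment matrix $\E\,y^{\otimes3}(y^{\otimes3})^\top$ expands by Wick's formula into a sum over pairings of six indices. The pairings that match each factor of the first $y^{\otimes 3}$ with a factor of the second give permutations of $(\Sigma^2)^{\otimes3}$, whose operator norm is $\mu^6$. Every other pairing contains one contraction inside each copy, so it has the form $\operatorname{vec}(\Sigma^2)\operatorname{vec}(\Sigma^2)^\top\otimes\Sigma^2$ up to index permutation, with operator norm at most $\fnorm{\Sigma^2}^2\mu^2=\tau_4\mu^2$. Dividing by $\tau_2^3$ yields
\[
  \opnorm{\E V_1V_1^\top}\le C\left(\frac{\mu^6}{\tau_2^3}+\frac{\tau_4\mu^2}{\tau_2^3}\right)
  = C\left(\frac{1}{\stablerank^3}+\frac{1}{\stablerank\effectivedim}\right)=C\kappa_3 .
\]
For $W$ I would compute $\E[r^2yy^\top]/\tau_2$ directly in the eigenbasis of $\Sigma$. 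Here $r=\sum_s(\lambda_s^2/\tau_2)(w_s^2-1)$ and $y_s=\lambda_s w_s$, so the matrix is diagonal. Its $s$-th entry is $\lambda_s^2\,\E[r^2w_s^2]/\tau_2$, and
\[
  \E[r^2w_s^2]\le \E r^2+C\frac{\lambda_s^2}{\tau_2}\cdot\frac{1}{\sqrt{\effectivedim}}+C\frac{\lambda_s^4}{\tau_2^2}.
\]
Since $\E r^2=2/\effectivedim$, this gives $\opnorm{\E W_1W_1^\top}\le C\mu^2/(\tau_2\effectivedim)+C\mu^4/\tau_2^2\cdot(\cdots)\le C\kappa_3$, using $\effectivedim\le\stablerank^2$ to absorb the cross term. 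Combining the two pieces by the triangle inequality gives the lemma.

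The main obstacle is the careful bookkeeping of all fifteen Wick pairings for the sixth-moment tensor: I must check that every pairing is either a permutation of $(\Sigma^2)^{\otimes3}$ or contains a within-copy contraction, with no worse mixed term. A second point needing care is the dependence between $r$ and $y$ in the $W$ term, which I would handle by the explicit coordinate computation above rather than by Hölder's inequality. Hölder would lose a $\log$ factor relative to $n\kappa_3$, which is why the cruder bound $\max_i r_i^2\opnorm{XX^\top}$ is avoided.
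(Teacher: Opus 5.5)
Your proposal is correct and follows the paper's proof essentially step for step. You use the same exact formula $G_{ij}=6c_{ij}^3+9c_{ij}r_ir_j$ (which the paper imports from \cite{KRM25} rather than rederiving via Isserlis), the same factorization $G=\Phi\Phi^\top+\Psi\Psi^\top$, Lemma~\ref{lem:independent-columns} on each factor, Equation~\eqref{eq:max-ai-cubic} for the maximal norms, and the same eigenbasis computation for the $\psi$ covariance. The only differences are cosmetic. You bound the cubic-tensor covariance by enumerating the fifteen Wick pairings directly, whereas the paper splits $\ip{T}{u^{\otimes3}}$ into its third- and first-order Wick chaos parts. Also, the cross term in your $W$ computation actually vanishes, since the paper gets exactly $\E[\delta^2 w_k^2]=(2\tau_4+8\lambda_k^4)/\tau_2^2$, so your absorption of it via $\effectivedim\le\stablerank^2$ is valid but unnecessary.
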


\begin{proof}
First, we set
\[
  u_i=\frac{\Sigma^{1/2}x_i}{\sqrt{\tau_2}},
  \qquad
  \delta_i=\norm{u_i}^2-1
  =\frac{x_i^\top\Sigma x_i}{\tau_2}-1.
\]
Then, applying~\cite[Lemma 5]{KRM25} with $\ell = \ell' = 3$ and expressing it in the above notation yields
\[
  G_{ij}
  =
  6\ip{u_i}{u_j}^3
  +9\delta_i\delta_j\ip{u_i}{u_j}.
\]
Introduce the feature maps
\[
  \phi(u)=\sqrt6\,u^{\otimes3},
  \qquad
  \psi(u)=3(\norm{u}^2-1)u.
\]
Both maps have mean zero by Gaussian symmetry, and
\begin{equation}
  G_{ij}
  =\ip{\phi(u_i)}{\phi(u_j)}
  +\ip{\psi(u_i)}{\psi(u_j)}.
  \label{eq:cubic-correlation-feature-decomposition}
\end{equation}
We next bound the population covariance operators of the two feature maps given by $\phi$ and $\psi$.  Let
$u$ be a generic copy of the $\{u_i\}_{i=1}^n$.  For a symmetric third-order tensor $T$, define
\[
  (c_T)_k=\frac1{\tau_2}\sum_{a,b=1}^dT_{abk}(\Sigma^2)_{ab}.
\]
Define the third-order Wick tensor \cite[Chapter~3]{Janson1997} coordinatewise by
\[
  \bigl({:u^{\otimes3}:}\bigr)_{abc}
  :=
  u_au_bu_c
  -\frac{(\Sigma^2)_{ab}}{\tau_2}u_c
  -\frac{(\Sigma^2)_{ac}}{\tau_2}u_b
  -\frac{(\Sigma^2)_{bc}}{\tau_2}u_a.
\]
Summing this identity against $T$ and using the symmetry of $T$ gives
\[
  \ip{T}{u^{\otimes3}}
  =\ip{T}{{:u^{\otimes3}:}}+3\ip{c_T}{u},
\]
where \(\ip{T}{{:u^{\otimes3}:}}\) is a third-order Wick polynomial and is therefore orthogonal to the linear term \(\ip{c_T}{u}\), which is a first-order Wick polynomial; see \cite[Theorem~3.9]{Janson1997}. 
By Isserlis' formula and the symmetry of \(T\),
\[
  \begin{aligned}
  \E\ip{T}{{:u^{\otimes3}:}}^2
  &=
  \frac6{\tau_2^3}
  \sum_{a,b,c,p,q,r=1}^d
  T_{abc}T_{pqr}
  (\Sigma^2)_{ap}(\Sigma^2)_{bq}(\Sigma^2)_{cr}\\
  &\le
  C\opnorm{\frac{\Sigma^2}{\tau_2}}^3\fnorm{T}^2
  =
  \frac{C}{\stablerank^3}\fnorm{T}^2.
  \end{aligned}
\]
For the linear term, the Cauchy--Schwarz inequality gives
\[
  \begin{aligned}
  \norm{c_T}^2
  &=
  \frac1{\tau_2^2}
  \sum_{k=1}^d
  \left\{
    \sum_{a,b=1}^dT_{abk}(\Sigma^2)_{ab}
  \right\}^2
  \le
  \frac{\tau_4}{\tau_2^2}\fnorm{T}^2
  =
  \frac1{\effectivedim}\fnorm{T}^2.
  \end{aligned}
\]
Since \(u\sim N(0,\Sigma^2/\tau_2)\), it follows that
\[
  \E\ip{c_T}{u}^2
  =
  c_T^\top\frac{\Sigma^2}{\tau_2}c_T
  \le
  \frac{C}{\stablerank\effectivedim}\fnorm{T}^2.
\]
Combining these two bounds with the orthogonal decomposition above and the
definition \(\phi(u)=\sqrt6\,u^{\otimes3}\), we obtain
\[
  \E\ip{T}{\phi(u)}^2
  \le
  C\left(
    \frac1{\stablerank^3}
    +\frac1{\stablerank\effectivedim}
  \right)\fnorm{T}^2
  =
  C\kappa_3\fnorm{T}^2.
\]
Since $u^{\otimes3}$ is symmetric, the supremum defining the covariance
operator norm may be restricted to symmetric tensors.  It follows that
\begin{equation}
  \opnorm{\E[\phi(u)\phi(u)^\top]}
  \le C\kappa_3.
  \label{eq:cubic-tensor-covariance}
\end{equation}
For the second feature, diagonalize $\Sigma$ with eigenvalues
$\lambda_1,\ldots,\lambda_d$ and write
$u=\Sigma w/\sqrt{\tau_2}$, where $w\sim N(0,I_d)$.  Set
\[
  \delta
  =
  \norm{u}^2-1
  =
  \sum_{\ell=1}^d
  \frac{\lambda_\ell^2}{\tau_2}(w_\ell^2-1).
\]
Since \(\delta\) is even in every coordinate, we have
\[
  \E\{\psi(u)_k\psi(u)_\ell\}=0,
  \qquad k\ne\ell,
\]
so \(\E\{\psi(u)\psi(u)^\top\}\) is diagonal in this basis.  Independence
of the Gaussian coordinates and the identities
\[
  \E(g^2-1)^2=2,
  \qquad
  \E(g^2-1)^2g^2=10
\]
give, for every \(k\),
\[
  \begin{aligned}
  \E(\delta^2w_k^2)
  &=
  \sum_{\ell=1}^d
  \frac{\lambda_\ell^4}{\tau_2^2}
  \E\{(w_\ell^2-1)^2w_k^2\}
  =
  \frac{10\lambda_k^4}{\tau_2^2}
  +\frac2{\tau_2^2}\sum_{\ell\ne k}\lambda_\ell^4
  =
  \frac{2\tau_4+8\lambda_k^4}{\tau_2^2}.
  \end{aligned}
\]
Since \(\psi(u)=3\delta u\), we conclude that
\begin{equation}
  \begin{aligned}
  \opnorm{\E\{\psi(u)\psi(u)^\top\}}
  &=
  9\max_{1\le k\le d}
  \frac{\lambda_k^2}{\tau_2}\E(\delta^2w_k^2)\\
  &\le
  C\left\{
    \left(\max_k\frac{\lambda_k^2}{\tau_2}\right)
    \frac{\tau_4}{\tau_2^2}
    +
    \left(\max_k\frac{\lambda_k^2}{\tau_2}\right)^3
  \right\}
  =
  C\kappa_3.
  \end{aligned}
  \label{eq:cubic-radial-linear-covariance}
\end{equation}

Let $\Phi$ and $\Psi$ be the matrices whose rows are
$\phi(u_i)^\top$ and $\psi(u_i)^\top$, respectively.  The norms 
of the features satisfy
\[
  \norm{\phi(u_i)}^2=6(1+\delta_i)^3,
  \qquad
  \norm{\psi(u_i)}^2=9\delta_i^2(1+\delta_i).
\]
If $M_n=\max_i|\delta_i|$, then
\[
  \max_i\norm{\phi(u_i)}^2+\max_i\norm{\psi(u_i)}^2
  \le C(1+M_n^3).
\]
Hence
Lemma~\ref{lem:scale-correlation-bounds} gives
\begin{equation}
  \E\max_i\norm{\phi(u_i)}^2
  +\E\max_i\norm{\psi(u_i)}^2
  \le C(1+\eta_n^3).
  \label{eq:cubic-feature-max-norm}
\end{equation}
Apply Lemma~\ref{lem:independent-columns} to the independent columns of
$\Phi^\top$ and $\Psi^\top$.  Equations
\eqref{eq:cubic-tensor-covariance}--\eqref{eq:cubic-feature-max-norm},
followed by $(a+b)^2\le2a^2+2b^2$, give
\[
  \E\opnorm{\Phi}^2+\E\opnorm{\Psi}^2
  \le C\left\{n\kappa_3+(1+\eta_n^3)\log n\right\}.
\]
Since Equation~\eqref{eq:cubic-correlation-feature-decomposition} gives
$G=\Phi\Phi^\top+\Psi\Psi^\top$, we conclude that
\begin{equation*}
  \E\opnorm{G}
  \le C\left\{n\kappa_3+(1+\eta_n^3)\log n\right\}.
\end{equation*}
This completes the proof of the lemma.
\end{proof}

\begin{lemma}
\label{lem:cubic-column-norm}
Let \(q:\mathbb R\to\mathbb R\) satisfy
\(|q(u)|\le C_q(1+|u|^3)\).  For iid
\(z_1,\ldots,z_n\sim N(0,\Sigma)\), independent of
the iid sample \(x_1,\ldots,x_n\sim N(0,\Sigma)\), define
\[
  V_{ij}
  =q\!\left(\frac{\ip{z_j}{x_i}}{\sqrt{\tau_2}}\right)
  -\E_x q\!\left(\frac{\ip{z_j}{x}}{\sqrt{\tau_2}}\right).
\]
Then
\[
  \E\max_{1\le j\le n}\sum_{i=1}^nV_{ij}^2
  \le C_q n(1+\eta_n^3).
\]
\end{lemma}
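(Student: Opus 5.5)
The plan is to condition on $z_1,\ldots,z_n$ and use the fact that, given $z_j$, the variables $\ip{z_j}{x_i}/\sqrt{\tau_2}$, $i=1,\ldots,n$, are iid $N(0,a_{z_j})$ with $a_{z_j}=z_j^\top\Sigma z_j/\tau_2$. We then control the column sums with a moment (Rosenthal) inequality at order $p\asymp\log n$, exactly as in the proof of \eqref{eq:quadratic-feature-column-bound}. Write $g_{ij}:=\ip{z_j}{x_i}/\sqrt{\tau_2}$. Conditionally on $z_j$, the $V_{ij}$ are iid and centered, and the growth bound on $q$ gives
\[
  \|V_{ij}\|_{L^{2p}(x\mid z)}\le C_q\bigl(1+\|g_{ij}\|_{L^{2p}(x\mid z)}^3\bigr)\le C_q\bigl(1+p^{3/2}a_{z_j}^{3/2}\bigr).
\]
In particular, $\E_x[V_{ij}^2\mid z_j]\le C_q(1+a_{z_j}^3)$.

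\textbf{Step 1 (split the column sum).} For each $j$, write
\[
  \sum_iV_{ij}^2=\sum_i\E_x[V_{ij}^2\mid z_j]+\sum_i\bigl(V_{ij}^2-\E_x[V_{ij}^2\mid z_j]\bigr).
\]
The first part is at most $C_qn(1+a_{z_j}^3)$.

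\textbf{Step 2 (fluctuation term).} Apply the scalar Rosenthal inequality \cite[Theorem~15.10]{BLM13} to the centered iid summands $\xi_{ij}:=V_{ij}^2-\E_x[V_{ij}^2\mid z_j]$ with $p=\lceil\log n\rceil$. This gives
\[
  \Bigl\|\sum_i\xi_{ij}\Bigr\|_{L^p(x\mid z)}\le C\Bigl(\sqrt{pn}\,\|\xi_{1j}\|_{L^2}+p\,n^{1/p}\|\xi_{1j}\|_{L^p}\Bigr).
\]
Here $\|\xi_{1j}\|_{L^2}\le C_q(1+a_{z_j}^3)$ and $\|\xi_{1j}\|_{L^p}\le C_q(1+p^3a_{z_j}^3)$. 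To take the maximum over $j$, bound the conditional expectation of the max by the $L^p$ norm of the max, and that by $n^{1/p}\le e$ times the maximum of the $L^p$ norms. The fluctuation term is then at most
\[
  C_q\bigl(\sqrt{n\log n}+(\log n)^4\bigr)\max_j\bigl(1+a_{z_j}^3\bigr)\le C_qn\max_j\bigl(1+a_{z_j}^3\bigr),
\]
since $(\log n)^4\le n$ for large $n$ (for small $n$ the constant absorbs it).

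\textbf{Step 3 (outer expectation).} Combining the two steps gives $\E_x\max_j\sum_iV_{ij}^2\le C_qn\max_j(1+a_{z_j}^3)$. Taking the expectation over $z_1,\ldots,z_n$ and applying \eqref{eq:max-ai-cubic} of Lemma~\ref{lem:scale-correlation-bounds} (the $z_j$ have the same law as the $x_i$) yields $C_qn(1+\eta_n^3)$.

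The main obstacle is Step 2. The summands are only heavy-tailed polynomially, with $L^p$ norms growing like $p^3$, so the Rosenthal remainder $p^4$ must be absorbed into $n$ rather than into a $\log n$ factor. We also have to make sure the conditional-on-$z$ bound remains linear in $1+a_{z_j}^3$ uniformly in $j$, since any extra power of $a_{z_j}$ would break the $\eta_n^3$ rate. If the Rosenthal bookkeeping is loose, the fallback is to truncate the $|g_{ij}|$ at level $C\sqrt{a_{z_j}\log n}$ and handle the tail event by a crude union bound.
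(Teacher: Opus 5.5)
Your proposal is correct and follows the paper's proof essentially step for step. You condition on $z$, apply Rosenthal's inequality with $p=\lceil\log n\rceil$ to the centered iid summands $V_{ij}^2-\E_x[V_{ij}^2\mid z]$ using their $p^3(1+a_{z_j}^3)$ moment growth, absorb $\sqrt{pn}$ and $n^{1/p}p^4$ into $Cn$, take the maximum over $j$ via the $L^p$ maximal inequality, and finish with \eqref{eq:max-ai-cubic}. There is one harmless slip: the growth bound gives $\|q(g_{ij})\|_{L^{2p}}\le C_q(1+\|g_{ij}\|_{L^{6p}}^3)$ rather than $\|g_{ij}\|_{L^{2p}}^3$, but since $\|g_{ij}\|_{L^{6p}}\le C\sqrt{p\,a_{z_j}}$, your stated bound $C_q(1+p^{3/2}a_{z_j}^{3/2})$ still holds.
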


\begin{proof}
We condition on the decoupling variables, apply Rosenthal's
inequality within each column, and then take a maximum over the columns.
Specifically, condition
on the full vector \(z=(z_1,\ldots,z_n)\), and write
\(a_j=z_j^\top\Sigma z_j/\tau_2\).  For each fixed \(j\), the variables
\(V_{1j},\ldots,V_{nj}\) are then iid and centered.  Gaussian moments and
the cubic growth assumption give, for every integer \(p\ge2\),
\[
  \|V_{ij}^2\|_{L^p(x\mid z)}
  \le C_qp^3(1+a_j^3),
  \qquad
  \E_x[V_{ij}^2\mid z]\le C_q(1+a_j^3).
\]
Applying the scalar Rosenthal inequality~\cite[Theorem~15.10]{BLM13} to
\(V_{ij}^2-\E_x[V_{ij}^2\mid z]\) yields
\[
  \left\|\sum_{i=1}^n
    \{V_{ij}^2-\E_x[V_{ij}^2\mid z]\}
  \right\|_{L^p(x\mid z)}
  \le
  C_q\{\sqrt{pn}+n^{1/p}p^4\}(1+a_j^3).
\]
Take \(p=\lceil\log n\rceil\).  The elementary bounds
\(n^{1/p}\le C\), \(p^4\le Cn\), and \(\sqrt{pn}\le Cn\), followed by
the \(L^p\) maximal inequality over \(j\), show that
\[
  \E_x\left[\max_j\sum_iV_{ij}^2\,\middle|\,z\right]
  \le C_qn\max_j(1+a_j^3).
\]
Averaging over \(z\) and applying Equation~\eqref{eq:max-ai-cubic} completes the proof of the lemma.
\end{proof}

\subsection{Higher-order residual}
\label{sec:tail-gap}

In this section, we control the higher-order residual $\opnorm{HT_{\geq 4}H}$.
It is necessary to control this residual which consists of all Hermite components of order four or above altogether, rather than individually bounding each of its Hermite components.
This treatment is necessary 
because the threshold function has square summable Hermite coefficients but
not the absolute summability required by a termwise triangle inequality.

\begin{proposition}
\label{prop:tail}
Assume that \(t\) stays in a fixed compact interval.  Let
\[
  q_{\ge4}(u)
  =
  \mathbbm{1}\{u\ge t\}-p_{\mathrm G}-\sum_{k=1}^3\beta_k\He_k(u),
  \qquad
  (T_{\ge4})_{ij}
  =
  q_{\ge4}\!\left(\frac{\ip{x_i}{x_j}}{\sqrt{\tau_2}}\right)\mathbbm{1}\{i\ne j\}.
\]
Then, we have
\begin{equation}
  \left(\E\opnorm{HT_{\ge4}H}^2\right)^{1/2}
  \le C_t\left\{\sqrt{n(1+\eta_n^3)\log n}+\frac{n}{\effectivedim}\right\}.
  \label{eq:tail-bound}
\end{equation}
\end{proposition}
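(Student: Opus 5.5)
We apply Proposition~\ref{prop:offdiag-krm} to the symmetric kernel $k(x,y)=q_{\ge4}(\ip{x}{y}/\sqrt{\tau_2})$. Since $\opnorm{H}\le1$, it suffices to bound $(\E\opnorm{T_{\ge4}}^2)^{1/2}$. The kernel is square integrable because $|q_{\ge4}(u)|\le C_t(1+|u|^3)$. Conditional on $x$, the variable $\ip{x}{z}/\sqrt{\tau_2}$ is exactly $N(0,a_x)$ with $a_x=x^\top\Sigma x/\tau_2=1+\delta_x$. The infinite Hermite expansion is therefore legitimate after conditioning, and all three terms of Proposition~\ref{prop:offdiag-krm} can be computed through the Gaussian rescaling (Mehler) identity
\[
  \E_{g\sim N(0,1)}\He_k(\sqrt a\,g)=\frac{k!}{2^{k/2}(k/2)!}(a-1)^{k/2}\mathbbm 1\{k\text{ even}\}.
\]

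\textbf{Step 1 (first-order projection).} The identity gives
\[
  h(x)=\sum_{k\ge4,\,k\text{ even}}\beta_k\frac{k!}{2^{k/2}(k/2)!}\delta_x^{k/2}.
\]
The leading term is $\beta_4\cdot 3\delta_x^2$. For the whole series, I would rather use the closed form $h(x)=\bar\Phi(t/\sqrt{a_x})-p_{\mathrm G}-\beta_2\delta_x$, which follows because $\He_1$ and $\He_3$ have zero conditional mean and $\E\He_2(\sqrt a g)=a-1$. A second-order Taylor expansion of $a\mapsto\bar\Phi(t/\sqrt a)$ around $a=1$ then gives $|h(x)|\le C_t\delta_x^2$. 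The coefficient of the linear term is exactly $\beta_2=t\varphi(t)/2$, so that term cancels. Lemma~\ref{lem:scale-correlation-bounds} then yields $n\sqrt{\E h^2}\le C_tn/\effectivedim$. This is the $n/\effectivedim$ term in \eqref{eq:tail-bound}.

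\textbf{Step 2 (column term).} We have $|q_{\ge4}(u)|\le C_t(1+|u|^3)$, so Lemma~\ref{lem:cubic-column-norm} applies directly and gives $\E B_n^2\le C_tn(1+\eta_n^3)$. This produces $\sqrt{n(1+\eta_n^3)\log n}$.

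\textbf{Step 3 (correlation matrix, the main obstacle).} We need
\[
  \E\opnorm{G}\le C_t\bigl\{(1+\eta_n^3)\log n+n/\effectivedim^2\bigr\},
\]
since $\sqrt{n\cdot n/\effectivedim^2}=n/\effectivedim$. A termwise approach is not available. I would use the extension of \cite[Lemma 5]{KRM25} to degrees $(\ell,\ell')$, expressing $\E_z\He_\ell(\ip{x_i}{z}/\sqrt{\tau_2})\He_{\ell'}(\ip{z}{x_j}/\sqrt{\tau_2})$ as a polynomial in $\ip{u_i}{u_j}$, $\delta_i$ and $\delta_j$. Summing this against $\beta_\ell\beta_{\ell'}$ writes $G$ as the sum of a diagonal-type piece and an off-diagonal kernel $g(u_i,u_j)$.

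To bound $\opnorm{G}$ robustly, I would split $G=\diag(G)+G_{\mathrm{off}}$. The diagonal satisfies
\[
  \E\max_iG_{ii}=\E\max_i\E_zq_{\ge4}^2\le C_t(1+\eta_n^3).
\]
For the off-diagonal part, I would use the fact that $G\succeq0$ is a Gram matrix $G=FF^\top$ with feature map $F_i=q_{\ge4}(\ip{x_i}{\cdot}/\sqrt{\tau_2})\in L^2(N(0,\Sigma))$. Lemma~\ref{lem:independent-columns} applies to the independent rows $F_i$ and gives
\[
  \E\opnorm{G}\le 2n\opnorm{\E F_iF_i^{*}}+C\log n\,\E\max_i\norm{F_i}^2 .
\]
The second term is $C_t(1+\eta_n^3)\log n$. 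The remaining task is to bound the operator norm of the population integral operator
\[
  (Kf)(z)=\E_x\bigl[q_{\ge4}(\ip{x}{z}/\sqrt{\tau_2})\,q_{\ge4}(\ip{x}{\cdot}/\sqrt{\tau_2})\bigr]
\]
by $C_t(1/\effectivedim^2+\kappa_3)$-type quantities.

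Conditioning on $x$ and expanding in the Hermite basis adapted to $N(0,\Sigma)$, the degree-$k$ Wick chaos components contribute, by the argument of Lemma~\ref{lem:cubic-correlation-matrix}, at most $\beta_k^2k!\,\stablerank^{-(k-1)}$ (up to lower-chaos corrections controlled by $1/\effectivedim$). The sum $\sum_k\beta_k^2k!<\infty$ supplies the needed summability, because only $\ell^2$ summability of the coefficients is used here, not absolute summability. Combined with $\stablerank^{-3}\le\stablerank^{-2}$, $\effectivedim\le\stablerank^2$, and the fact that the contributions of the $\delta$-corrections are $O(1/\effectivedim^2)$ in view of Step 1, this should give
\[
  n\opnorm{\E FF^*}\lesssim n/\effectivedim^2+n\kappa_3 .
\]
If the $n\kappa_3$ piece is not absorbed into the stated rate, it can be compared with $n/\effectivedim$ after the square root. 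I expect verifying this uniform-in-$k$ chaos bound to be the hardest step. Assembling Steps 1–3 in Proposition~\ref{prop:offdiag-krm} then gives \eqref{eq:tail-bound}.
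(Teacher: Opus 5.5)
Your Steps 1 and 2 are correct and match the paper. Your closed form $h(x)=\bar\Phi(t/\sqrt{a_x})-p_{\mathrm G}-\beta_2\delta_x$ is the paper's $\gamma_0(a)$ from Lemma~\ref{lem:scaled-tail-coefficients}. You only need to make $|h|\le C_t\delta_x^2$ hold for all $a>0$, for instance via the bounded second derivative on $[1/2,\infty)$ and the boundedness of $h$ on $(0,1/2)$, where $|\delta_x|\ge 1/2$.

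The gap is Step 3, which is the heart of the proposition. Reducing $\E\opnorm{G}$ to $n\opnorm{\E FF^*}+\log n\,\E\max_i\norm{F_i}^2$ via Lemma~\ref{lem:independent-columns}, applied in the span of $F_1,\dots,F_n$, is fine. The bound on the population operator, however, is only conjectured, and the justification you give does not supply it. Writing $z=\Sigma^{1/2}w$, $u_x=\Sigma^{1/2}x/\sqrt{\tau_2}$ and $\hat u_x=u_x/\norm{u_x}$, one has $F_x=\sum_{r\ge0}\gamma_r(a_x)\He_r(\ip{\hat u_x}{w})$. Step 1 controls only the degree-$0$ coefficient. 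The components in chaoses $1,2,3$ are also nonzero, and for $r=2,3$ their coefficients are only $O(|\delta_x|)$. On top of that you would need an estimate uniform in the degree and control of cross-chaos terms; none of this is provided.

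More importantly, even the bound you aim for does not give the statement. Since $\sqrt{n\cdot n\kappa_3}=n\sqrt{\kappa_3}\ge n/\sqrt{\stablerank\effectivedim}\ge n/\effectivedim$, your comparison ``after the square root'' goes the wrong way. For example, take $\Sigma=\diag(d^{3/8},1,\ldots,1)$. Then $\stablerank\asymp d^{1/4}$ and $\effectivedim\asymp d^{1/2}$, so $n\sqrt{\kappa_3}\asymp nd^{-3/8}$ exceeds $n/\effectivedim$ by a factor $d^{1/8}$. For $n\ge d^2$ it also dominates $\sqrt{n(1+\eta_n^3)\log n}$.

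So your route yields a weaker bound. That bound would still suffice for Theorem~\ref{thm:gram-recovery}, since $\sqrt{d\kappa_3}\le\sqrt{2d}/\stablerank$, but it is not \eqref{eq:tail-bound}. The $\kappa_3$ term is an artifact. By Jensen, $n\opnorm{\E FF^*}\le\E\opnorm{G}$. The bound $\E\opnorm{G}\le C_t(1+\eta_n^3+n/\effectivedim^2)$, valid for every $n$, therefore forces $\opnorm{\E FF^*}\le C_t/\effectivedim^2$.

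The missing idea is to skip the population operator and expand each entry exactly.
\begin{itemize}
\item \textbf{Bivariate Mehler expansion.} Conditionally on $(x_i,x_j)$, the pair $(\ip{x_i}{z},\ip{x_j}{z})/\sqrt{\tau_2}$ is centered Gaussian with variances $a_i,a_j$ and correlation $\rho_{ij}=x_i^\top\Sigma x_j/\{(x_i^\top\Sigma x_i)(x_j^\top\Sigma x_j)\}^{1/2}$. Mehler's formula then gives $G_{ij}=\sum_{r\ge0}r!\,\gamma_r(a_i)\gamma_r(a_j)\rho_{ij}^r$, with $\gamma_r(a)=\frac1{r!}\E[q_{\ge4}(\sqrt a\,g)\He_r(g)]$. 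This uses only $L^2$ summability of the coefficients.
\item \textbf{Low-order coefficients.} Compute $\gamma_0,\dots,\gamma_3$ explicitly from the truncated-normal identity together with $\He_2(\sqrt a g)=a\He_2(g)+(a-1)$ and $\He_3(\sqrt a g)=a^{3/2}\He_3(g)+3\sqrt a(a-1)g$. This gives $|\gamma_0|+|\gamma_1|\le C_t\delta^2$ and $|\gamma_2|+|\gamma_3|\le C_t(|\delta|+\delta^2)$.
\item \textbf{Tail $r\ge4$.} Cauchy--Schwarz with Parseval bounds this part by $C_t(1+a_i^3)^{1/2}(1+a_j^3)^{1/2}\rho_{ij}^4$.
\item \textbf{Off-diagonal bound.} All fixed moments of $\delta_i$ and $\rho_{ij}$ are of order $\effectivedim^{-1/2}$ (Lemma~\ref{lem:joint-scale-correlation-bounds}). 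Split off the rank-one piece $\delta_i^2\delta_j^2$, which is controlled by $2\sum_i\delta_i^4$. A crude Frobenius-norm bound on the rest then gives $\E\opnorm{G-\diag(G)}\le C_tn/\effectivedim^2$.
\end{itemize}
Combined with your diagonal bound, this yields $\E\opnorm{G}\le C_t(1+\eta_n^3+n/\effectivedim^2)$ and hence \eqref{eq:tail-bound}. The factor $\rho_{ij}^4\approx\effectivedim^{-2}$ carried by the degree-$\ge4$ part is exactly what makes the lossy Frobenius bound sufficient. No uniform-in-degree chaos estimate is needed.
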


\begin{proof}%
We introduce the three quantities that arise when
Proposition~\ref{prop:offdiag-krm} is applied to the residual
\(q_{\ge4}\).  For an independent \(z\sim N(0,\Sigma)\), set
\begin{equation}
  m(x)=\E_z \left[q_{\ge4}\!\left(\frac{\ip{x}{z}}{\sqrt{\tau_2}}\right)\right],
\label{eq:tail-conditional-mean-definition}
\end{equation}
and define the conditional correlation matrix $\Gamma$ whose entries are given by
\begin{equation}
  \Gamma_{ij}
  =
  \E_z\left[
    q_{\ge4}\!\left(\frac{\ip{x_i}{z}}{\sqrt{\tau_2}}\right)
    q_{\ge4}\!\left(\frac{\ip{z}{x_j}}{\sqrt{\tau_2}}\right)
  \right].
\label{eq:tail-correlation-definition}
\end{equation}
For iid \(z_1,\ldots,z_n\sim N(0,\Sigma)\), independent of
\(x_1,\ldots,x_n\), set
\[
  B_{\ge4,n}^2
  =
  \E_{z_1,\ldots,z_n}
  \max_{1\le j\le n}
  \sum_{i=1}^n
  \left[
    q_{\ge4}\!\left(\frac{\ip{z_j}{x_i}}{\sqrt{\tau_2}}\right)
    -
    \E_x q_{\ge4}\!\left(\frac{\ip{z_j}{x}}{\sqrt{\tau_2}}\right)
  \right]^2 .
\]
By definition, \(q_{\ge4}(u)\) is bounded by \(C_t(1+|u|^3)\).
Therefore, Lemma~\ref{lem:cubic-column-norm} gives
\begin{equation}
  \E B_{\ge4,n}^2\le C_tn(1+\eta_n^3).
  \label{eq:tail-column-assumption}
\end{equation}

By definition, \(T_{\ge4}\) is the off-diagonal kernel matrix associated with
the symmetric kernel
\(q_{\ge4}(\ip{x}{y}/\sqrt{\tau_2})\).
Since 
$\opnorm{HT_{\ge4}H}\le \opnorm{T_{\ge4}}$, it is enough to bound the operator norm of the off-diagonal kernel matrix. 
The function \(q_{\ge4}\) has at most cubic growth, so all integrability
hypotheses in Proposition~\ref{prop:offdiag-krm} hold for Gaussian inputs.
Applying that proposition to this kernel, and using the definitions of
\(m,\Gamma\), and \(B_{\ge4,n}\), gives
\begin{align}
  \left(\E\opnorm{HT_{\ge4}H}^2\right)^{1/2}
  \le C\Big\{&
  n\sqrt{\E [m(x_1)^2]}
  +\sqrt{n\,\E\opnorm{\Gamma}}
  +\sqrt{\log n}\left(\E [B_{\ge4,n}^2]\right)^{1/2}
  \Big\}.
  \label{eq:krm-tail-bound-expanded}
\end{align}
The three terms in Proposition~\ref{prop:offdiag-krm} are
  estimated by Lemma~\ref{lem:tail-conditional-mean},
  Lemma~\ref{lem:tail-correlation}, and Equation~\eqref{eq:tail-column-assumption}, respectively. Substituting them into Equation~\eqref{eq:krm-tail-bound-expanded} gives
\begin{align*}
  \left(\E\opnorm{HT_{\ge4}H}^2\right)^{1/2}
  &\le
  C_t\left\{
    \frac{n}{\effectivedim}
    +\sqrt{n\left(1+\eta_n^3+\frac n{\effectivedim^2}\right)}
    +\sqrt{n(1+\eta_n^3)\log n}
  \right\}\\
  &\le
  C_t\left\{\sqrt{n(1+\eta_n^3)\log n}+\frac{n}{\effectivedim}\right\},
\end{align*}
which is precisely Equation~\eqref{eq:tail-bound}.
This completes the proof of the proposition.
\end{proof}

It remains to prove the  estimates used
above.  The next lemma first describes how the
Hermite coefficients of the whole residual change under a variance
perturbation.  

\begin{lemma}
\label{lem:scaled-tail-coefficients}
Let \(g\sim N(0,1)\), and for \(a>0\) define
\[
  Q_a(g)=q_{\ge4}(\sqrt a\,g),
  \qquad
  \gamma_r(a)=\frac1{r!}\E\big[Q_a(g)\He_r(g)\big],
  \qquad r\ge0.
\]
For all \(a>0\), we have
\begin{equation}
  \sum_{r\ge4} r!\,\gamma_r(a)^2
  \le C_t(1+a^3).
  \label{eq:tail-high-coeff-l2}
\end{equation}
Moreover, defining \(\delta:=a-1\), for all \(a>0\), we have
\[
  |\gamma_0(a)|+|\gamma_1(a)|
  \le C_t|\delta|^2,
  \qquad
  |\gamma_2(a)|+|\gamma_3(a)|
  \le C_t(|\delta|+|\delta|^2).
\]
\end{lemma}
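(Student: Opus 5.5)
The plan is to get \eqref{eq:tail-high-coeff-l2} from Parseval and to get the low-order estimates from an explicit computation of $\gamma_0,\ldots,\gamma_3$, Taylor-expanding them around $a=1$.

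\textbf{The $\ell^2$ tail bound.} By Parseval in $L^2(N(0,1))$, $\sum_{r\ge0} r!\,\gamma_r(a)^2=\E[Q_a(g)^2]$, and dropping the terms with $r\le3$ only decreases the sum. By construction, $|q_{\ge4}(u)|\le C_t(1+|u|^3)$, since the indicator and $p_{\mathrm G}$ are bounded and the $\beta_k$ are bounded for $t$ in a compact set. Hence
\[
  \E[Q_a(g)^2]\le C_t\,\E(1+a^{3/2}|g|^3)^2\le C_t(1+a^3),
\]
which proves \eqref{eq:tail-high-coeff-l2}.

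\textbf{Closed forms for the low-order coefficients.} Each $\gamma_r(a)$, $r\le3$, is the $r$-th coefficient of the indicator part minus that of the cubic polynomial part $p_{\mathrm G}+\sum_{k\le3}\beta_k\He_k(\sqrt a\,g)$. The indicator part has the closed forms $\E[\mathbbm{1}\{\sqrt a\,g\ge t\}\He_r(g)]=\varphi(t/\sqrt a)\He_{r-1}(t/\sqrt a)$ for $r\ge1$, and $\bar\Phi(t/\sqrt a)$ for $r=0$. The polynomial part is computed by expanding $\He_k(\sqrt a\,g)$ in the basis $\He_j(g)$ (Gaussian moments suffice). Two instances:
\[
  \gamma_0(a)=\bar\Phi(t/\sqrt a)-p_{\mathrm G}-\beta_2\delta,
\]
because $\E\He_2(\sqrt a\,g)=\delta$ and the odd terms vanish; and, using $\E[\He_3(\sqrt a\,g)g]=3\sqrt a\,\delta$,
\[
  \gamma_1(a)=\varphi(t/\sqrt a)-\beta_1\sqrt a-3\beta_3\sqrt a\,\delta .
\]
$\gamma_2$ and $\gamma_3$ are handled the same way; each is a smooth function of $a>0$ plus a polynomial in $\sqrt a$ of degree at most three.

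\textbf{Local behaviour near $a=1$.} Every $\gamma_r(1)=0$ for $r\le3$, because $\beta_1,\beta_2,\beta_3$ are exactly the Gaussian Hermite coefficients at $a=1$. The closed forms are $C^2$ in $a$ on $[1/2,3/2]$, with derivatives bounded uniformly for $t$ in the compact interval. Taylor's theorem therefore gives $|\gamma_2(a)|+|\gamma_3(a)|\le C_t|\delta|$ on this range.

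For $\gamma_0,\gamma_1$ I would also check that the first derivative at $a=1$ vanishes.
\begin{itemize}
  \item For $\gamma_0$: $\frac{d}{da}\bar\Phi(t/\sqrt a)\big|_{a=1}=t\varphi(t)/2=\beta_2$, which cancels the $-\beta_2\delta$ term.
  \item For $\gamma_1$: $\frac{d}{da}\varphi(t/\sqrt a)\big|_{a=1}=t^2\varphi(t)/2$, while $\frac{d}{da}\bigl(\beta_1\sqrt a+3\beta_3\sqrt a\,\delta\bigr)\big|_{a=1}=\varphi(t)/2+\varphi(t)(t^2-1)/2=t^2\varphi(t)/2$, so the two cancel.
\end{itemize}
A second-order Taylor expansion then yields $|\gamma_0|+|\gamma_1|\le C_t\delta^2$ for $|\delta|\le1/2$. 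This cancellation is the main step: it is exactly the Gaussian integration-by-parts (heat-equation) identity behind the formula for $\beta_k$. If the direct derivative check becomes messy for $\gamma_1$, I would instead use $\partial_a\E[f(\sqrt a\,g)]=\tfrac12\E[f''(\sqrt a\,g)]$ in its distributional form.

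\textbf{Global behaviour away from $a=1$.} For $|\delta|\ge1/2$, which covers both $a\in(0,1/2]$ and $a\ge3/2$, I would use crude bounds. Cauchy--Schwarz together with the growth bound on $q_{\ge4}$ gives $|\gamma_r(a)|\le C_t(1+a^{3/2})$ for $r\le3$. Since $|\delta|\ge1/2$, we have $1+a^{3/2}\le C(1+|\delta|^{3/2})\le C\delta^2\le C(|\delta|+\delta^2)$. Combining the local and global ranges proves both displayed inequalities for all $a>0$.
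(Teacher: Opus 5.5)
Your proposal is correct and follows the paper's proof essentially step by step: Parseval with the cubic growth bound for the tail sum, the same closed forms for $\gamma_0,\gamma_1$, the same vanishing of $\gamma_r(1)$ and of $\gamma_0'(1),\gamma_1'(1)$, and Taylor's theorem on $[1/2,3/2]$. The only cosmetic difference is that you treat $|\delta|\ge 1/2$ in one step via Cauchy--Schwarz, $|\gamma_r(a)|\le C_t(1+a^{3/2})\le C_t\delta^2$, whereas the paper reads crude bounds off the explicit formulas separately on $a\le 1/2$ and $a\ge 3/2$.
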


\begin{proof}
We first control the coefficients of order four and above by Parseval's theorem.  Since
\[
  q_{\ge4}(u)
  =
  \mathbbm{1}\{u\ge t\}-p_{\mathrm G}-\beta_1u-\beta_2\He_2(u)-\beta_3\He_3(u),
\]
we have \(|q_{\ge4}(u)|\le C_t(1+|u|^3)\).  Hence, for all \(a>0\),
\[
  \sum_{r\ge4} r!\,\gamma_r(a)^2
  \le
  \E Q_a(g)^2
  \le C_t(1+a^3),
\]
where the first inequality in the above display used Parseval's theorem.
This proves Equation~\eqref{eq:tail-high-coeff-l2}.

We next control the four low-order coefficients.  Put \(s_a=t/\sqrt a\).
The standard truncated-normal identity is
\begin{equation*}
  \E\big[\mathbbm{1}\{g\ge s\}\He_r(g)\big]
  =
  \varphi(s)\He_{r-1}(s),\qquad r\ge1.
\end{equation*}
Then, using the Hermite multiplication theorem gives
\[
  \He_2(\sqrt a\,g)=a\He_2(g)+(a-1),
  \qquad
  \He_3(\sqrt a\,g)
  =a^{3/2}\He_3(g)+3\sqrt a\,(a-1)\He_1(g).
\]
Putting these identities together gives us the following explicit expressions for the four low-order coefficients:
\begin{align}
  \gamma_0(a)
  &=
  \bar\Phi(s_a)-p_{\mathrm G}-\beta_2(a-1),
  \label{eq:gamma-zero-explicit}\\
  \gamma_1(a)
  &=
  \varphi(s_a)-\beta_1\sqrt a
  -3\beta_3\sqrt a\,(a-1),
  \label{eq:gamma-one-explicit}\\
  \gamma_2(a)
  &=
  \frac{s_a\varphi(s_a)}2-a\beta_2,
  \label{eq:gamma-two-explicit}\\
  \gamma_3(a)
  &=
  \frac{(s_a^2-1)\varphi(s_a)}6-a^{3/2}\beta_3.
  \label{eq:gamma-three-explicit}
\end{align}
At \(a=1\), these formulas and the fact that
\[
  \beta_1=\varphi(t),\qquad
  \beta_2=\frac{t\varphi(t)}2,\qquad
  3\beta_3=\frac{(t^2-1)\varphi(t)}2
\]
show that \(\gamma_r(1)=0\) for \(0\le r\le3\).  Differentiating
Equations~\eqref{eq:gamma-zero-explicit} and~\eqref{eq:gamma-one-explicit} with respect to $a$ also gives
\(\gamma_0'(1)=\gamma_1'(1)=0\).  The second derivatives of
\(\gamma_0,\gamma_1\) and the first derivatives of \(\gamma_2,\gamma_3\) are
uniformly bounded for \(a\in[1/2,3/2]\) and \(t\) in the fixed compact
interval.  Taylor's theorem therefore yields
\[
  |\gamma_0(a)|+|\gamma_1(a)|
  \le C_t|a-1|^2,
  \qquad
  |\gamma_2(a)|+|\gamma_3(a)|
  \le C_t|a-1|
\]
whenever \(|a-1|\le1/2\).

It remains to extend the local estimates to all \(a>0\).  If
\(0<a\le1/2\), the functions
\[
  \bar\Phi(s),\quad \varphi(s),\quad s\varphi(s),\quad
  (s^2-1)\varphi(s)
\]
are uniformly bounded for \(s\in\mathbb R\), and every remaining power of
\(a\) in Equations~\eqref{eq:gamma-zero-explicit}--\eqref{eq:gamma-three-explicit} is
bounded.  Thus all four coefficients are bounded, whereas
\(|a-1|\ge1/2\).  On the other hand, if \(a\ge3/2\), the four formulas give
\[
  |\gamma_0(a)|+|\gamma_2(a)|\le C_t(1+a),
  \qquad
  |\gamma_1(a)|+|\gamma_3(a)|\le C_t(1+a^{3/2}).
\]
On this range, both \(1+a\) and \(1+a^{3/2}\) are bounded by a constant
multiple of \(|a-1|+|a-1|^2\); for \(\gamma_0,\gamma_1\), the same
quantities are bounded by a constant multiple of \(|a-1|^2\).
Combining the three ranges proves the claimed bounds.
\end{proof}

The preceding coefficient estimates will be combined with the following
joint moment bounds for the conditional variance and correlation.

\begin{lemma}
\label{lem:joint-scale-correlation-bounds}
Let
\[
  a_i=\frac{x_i^\top\Sigma x_i}{\tau_2},
  \qquad
  \delta_i=a_i-1,
  \qquad
  \rho_{ij}
  =
  \frac{x_i^\top\Sigma x_j}
       {(x_i^\top\Sigma x_i)^{1/2}(x_j^\top\Sigma x_j)^{1/2}}
  \quad (i\ne j).
\]
For every fixed collection of nonnegative integers
\(\alpha,\beta,q,m\) and every \(i\ne j\), we have
\begin{equation}
  \E\left[
    (1+a_i^m)(1+a_j^m)
    |\delta_i|^\alpha|\delta_j|^\beta|\rho_{ij}|^q
  \right]
  \le
  C_{\alpha,\beta,q,m}\effectivedim^{-(\alpha+\beta+q)/2}.
  \label{eq:joint-scale-correlation}
\end{equation}
\end{lemma}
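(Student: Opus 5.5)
We plan to reduce everything to moments of centered quadratic and bilinear Gaussian forms and then combine them with H\"older's inequality. Write \(x_i=\Sigma^{1/2}w_i\) with \(w_i\sim N(0,I_d)\) iid, and set \(u_i=\Sigma w_i/\sqrt{\tau_2}\), so that \(a_i=\norm{u_i}^2\), \(\delta_i=\norm{u_i}^2-1\), and \(\rho_{ij}=\ip{u_i}{u_j}/(\norm{u_i}\norm{u_j})\). By H\"older's inequality (or Cauchy--Schwarz applied several times) with a fixed number of factors, the left-hand side of \eqref{eq:joint-scale-correlation} is at most
\[
  \|1+a_i^m\|_{L^4}\,\|1+a_j^m\|_{L^4}\,\|\delta_i^\alpha\|_{L^8}\,\|\delta_j^\beta\|_{L^8}\,\||\rho_{ij}|^q\|_{L^4}.
\]
It is therefore enough to show three facts for each fixed \(p\): \(\|a_i\|_{L^p}\le C_p\); \(\|\delta_i\|_{L^p}\le C_p\effectivedim^{-1/2}\); and \(\|\rho_{ij}\|_{L^p}\le C_p\effectivedim^{-1/2}\). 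The first two are already contained in Lemma~\ref{lem:scale-correlation-bounds}, since \(a_i=1+\delta_i\) and \(\effectivedim\ge1\).

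The real work is the correlation factor \(\rho_{ij}\). Its numerator behaves well: conditional on \(u_j\), the inner product \(\ip{u_i}{u_j}\) is Gaussian with variance \(u_j^\top(\Sigma^2/\tau_2)u_j=w_j^\top\Sigma^4w_j/\tau_2^2\). This has expectation \(\tau_4/\tau_2^2=1/\effectivedim\), and by Hanson--Wright all its moments are bounded by \(C_p/\effectivedim\) (note \(\tr(\Sigma^8)\le\tau_4^2\)). Hence \(\|\ip{u_i}{u_j}\|_{L^p}\le C_p\effectivedim^{-1/2}\).

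The difficulty is the denominator \(\norm{u_i}\norm{u_j}\), which could be small. We handle it in two ways. First, \(|\rho_{ij}|\le1\) always. Second, we split on the event \(E=\{a_i\ge1/2,\ a_j\ge1/2\}\). On \(E\) we have \(|\rho_{ij}|\le2|\ip{u_i}{u_j}|\), so the bound above applies. On \(E^c\) we use \(|\rho_{ij}|^q\le1\) and need \(\Pp(E^c)\le C_p\effectivedim^{-p}\) for every \(p\). Markov's inequality applied to \(|\delta_i|^{2p}\), together with Lemma~\ref{lem:scale-correlation-bounds}, gives
\[
  \Pp(a_i<1/2)\le\Pp(|\delta_i|>1/2)\le 4^{p}\E|\delta_i|^{2p}\le C_p\effectivedim^{-p},
\]
which is exactly what is required.

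To finish, we assemble the pieces by splitting the expectation in \eqref{eq:joint-scale-correlation} over \(E\) and \(E^c\). On \(E\) we replace \(|\rho_{ij}|^q\) by \(2^q|\ip{u_i}{u_j}|^q\) and apply H\"older's inequality as above, which gives \(C\effectivedim^{-(\alpha+\beta+q)/2}\). On \(E^c\) we bound \(|\rho_{ij}|^q\le1\) and use Cauchy--Schwarz against \(\Pp(E^c)^{1/2}\le C\effectivedim^{-(\alpha+\beta+q)/2}\), choosing \(p\) large; the remaining factors are bounded in \(L^2\) by the moment estimates. When \(q=0\) the split is unnecessary.

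We expect the main obstacle to be handling the denominator of \(\rho_{ij}\). The key is that the lower tail of \(a_i\) decays polynomially in \(\effectivedim\) to every order, which follows from the \(L^p\) bounds on \(\delta_i\) that hold for all \(p\).
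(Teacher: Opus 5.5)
Your proposal is correct and follows the paper's proof essentially step for step: split on $\{a_i\ge 1/2,\ a_j\ge 1/2\}$, use $|\rho_{ij}|\le 2|x_i^\top\Sigma x_j|/\tau_2$ there together with $L^s$ bounds of order $\effectivedim^{-1/2}$ on this bilinear form, use $|\rho_{ij}|\le 1$ off the event, and finish with H\"older.

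The differences are minor. You obtain the bilinear-form moments from conditional Gaussianity plus Hanson--Wright rather than from hypercontractivity. You also bound $\Pp(a_i<1/2)$ by Markov on high moments of $\delta_i$, which gives polynomial decay of any fixed order and suffices for a fixed target power. The paper instead uses the Laurent--Massart lower-tail bound, which gives $e^{-c\effectivedim}$.
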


\begin{proof}
Define as shorthand
\[
  c_{ij}=\frac{x_i^\top\Sigma x_j}{\tau_2}.
\]
Lemma~\ref{lem:scale-correlation-bounds} gives the fixed-order bounds
\(\|\delta_i\|_{L^s}\le C_s\effectivedim^{-1/2}\) and
\(\|a_i\|_{L^s}\le C_s\).  Conditional on \(x_i\),
\(c_{ij}\) is a zero-mean Gaussian, and its variance is
\(x_i^\top\Sigma^3x_i/\tau_2^2\).  Gaussian hypercontractivity
\cite[Chapter~5]{Janson1997} and
\(\E c_{ij}^2=1/\effectivedim\) therefore give, for every fixed \(s\ge1\),
\[
  \|c_{ij}\|_{L^s}\le C_s\effectivedim^{-1/2}.
\]
On the event
\(\mathcal E_{ij}=\{a_i,a_j\ge1/2\}\),
\[
  |\rho_{ij}|\le2|c_{ij}|.
\]
Hölder's inequality, with sufficiently high fixed moments, therefore gives
the right side of \eqref{eq:joint-scale-correlation} on \(\mathcal E_{ij}\).
On \(\mathcal E_{ij}^c\), Cauchy's inequality in the \(\Sigma\)-inner product gives
\(|\rho_{ij}|\le1\).
Note that $\mathcal E_{ij}$ is a high-probability event; the one-sided Gaussian quadratic-form tail bound
in \cite[Lemma~1]{LM00} gives
\(\Pp(\mathcal E_{ij}^c)\le C\exp(-c\effectivedim)\).  Another application of Hölder's
inequality absorbs this exponentially small contribution into any prescribed
fixed power of \(\effectivedim^{-1}\).  This proves Equation~\eqref{eq:joint-scale-correlation} and completes the proof of the lemma.
\end{proof}

We next control the conditional mean term.

\begin{lemma}
\label{lem:tail-conditional-mean}
The conditional mean \(m\) defined in
\eqref{eq:tail-conditional-mean-definition} satisfies
\begin{equation*}
  \E m(x_1)^2\le C_t\effectivedim^{-2}.
\end{equation*}
\end{lemma}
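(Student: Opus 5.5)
Conditional on $x$, the variable $\ip{x}{z}/\sqrt{\tau_2}$ with $z\sim N(0,\Sigma)$ is exactly Gaussian with mean zero and variance $a:=x^\top\Sigma x/\tau_2$. We can therefore write $\ip{x}{z}/\sqrt{\tau_2}\overset{d}{=}\sqrt{a}\,g$ with $g\sim N(0,1)$, which gives
\[
  m(x)=\E_g\left[q_{\ge4}(\sqrt a\,g)\right]=\E\left[Q_a(g)\He_0(g)\right]=\gamma_0(a),
\]
in the notation of Lemma~\ref{lem:scaled-tail-coefficients}. The whole problem thus reduces to a scalar function of the single random variable $a_1=x_1^\top\Sigma x_1/\tau_2$. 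This is the only place where Gaussianity of the conditional law is used, and it is exact, so no Hermite series needs to be evaluated at a non-Gaussian argument.

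Lemma~\ref{lem:scaled-tail-coefficients} gives, for every $a>0$, the bound $|\gamma_0(a)|\le C_t|\delta|^2$ with $\delta=a-1$. Applying it at $a=a_1$ yields, pointwise,
\[
  m(x_1)^2\le C_t\,\delta_1^4,\qquad \delta_1=a_1-1 .
\]
Taking expectations and invoking the moment bound $\|\delta_1\|_{L^4}\le C\effectivedim^{-1/2}$ from Equation~\eqref{eq:delta-explicit-moments} of Lemma~\ref{lem:scale-correlation-bounds} with $p=4$, we obtain
\[
  \E m(x_1)^2\le C_t\E\delta_1^4\le C_t\effectivedim^{-2}.
\]
This is the claim. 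Because the coefficient bound in Lemma~\ref{lem:scaled-tail-coefficients} holds globally for all $a>0$, no truncation or good-event argument is needed, and the moment bound on $\delta_1$ is uniform.

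The only substantive point is the quadratic vanishing $\gamma_0(a)=O((a-1)^2)$. This vanishing rests on the facts that $\gamma_0(1)=0$ and $\gamma_0'(1)=0$, the latter being precisely the statement that the $\beta_2$ correction cancels the first-order variance sensitivity of $\bar\Phi(t/\sqrt a)$. Since this was already established in Lemma~\ref{lem:scaled-tail-coefficients}, the main care in the present step is confirming that the conditional law identification is exact and that the $L^4$ moment of $\delta_1$ carries no $\log n$ factor. Both hold because no maximum over $i$ is involved here.
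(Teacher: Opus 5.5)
Your proposal is correct and follows the paper's proof essentially verbatim. You identify $m(x)=\gamma_0(a)$ via the exact conditional law $N(0,a)$, apply the global bound $|\gamma_0(a)|\le C_t(a-1)^2$ from Lemma~\ref{lem:scaled-tail-coefficients}, and conclude with the $p=4$ case of Equation~\eqref{eq:delta-explicit-moments} to get $\E m(x_1)^2\le C_t\effectivedim^{-2}$.
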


\begin{proof}
Conditional on \(x\), the variable \(\ip{x}{z}/\sqrt{\tau_2}\) is \(N(0,a)\), where
\(
  a=x^\top\Sigma x / \tau_2.
\)
Since \(\He_0=1\), the definition of \(\gamma_0\) in
Lemma~\ref{lem:scaled-tail-coefficients} gives
\[
  m(x)=\E_g q_{\ge4}(\sqrt a\,g)=\gamma_0(a).
\]
The same lemma gives the global bound
\(|m(x)|\le C_t(a-1)^2\).  Hence
\[
  \E m(x)^2
  \le C_t
  \E(a-1)^4
  \le C_t\effectivedim^{-2},
\]
where the last inequality applies Equation~\eqref{eq:delta-explicit-moments} with \(p=4\).
This completes the proof of the lemma.
\end{proof}

It remains to control the conditional correlation matrix, which we do in the next lemma.

\begin{lemma}
\label{lem:tail-correlation}
The conditional correlation matrix \(\Gamma\) defined in
\eqref{eq:tail-correlation-definition} satisfies
\begin{equation}
  \E\opnorm{\Gamma}
  \le C_t\left(1+\eta_n^3+\frac n{\effectivedim^2}\right).
  \label{eq:tail-correlation-assumption}
\end{equation}
\end{lemma}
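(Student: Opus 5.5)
The plan is to separate the diagonal of $\Gamma$ from its off-diagonal part. We bound the diagonal in operator norm by its maximal entry, and the off-diagonal part crudely by its Frobenius norm; the Frobenius bound suffices because each off-diagonal entry will be of order $\effectivedim^{-2}$. The engine is an exact Mehler-type expansion of each entry $\Gamma_{ij}$ in terms of the scaled-residual coefficients $\gamma_r(a)$ of Lemma~\ref{lem:scaled-tail-coefficients}.

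\textbf{Diagonal.} Conditional on $x_i$, the variable $U_i=\ip{x_i}{z}/\sqrt{\tau_2}$ is $N(0,a_i)$. Since $|q_{\ge4}(u)|\le C_t(1+|u|^3)$, we get $\Gamma_{ii}=\E_z q_{\ge4}(U_i)^2\le C_t(1+a_i^3)$. Hence $\opnorm{\diag(\Gamma)}\le C_t\max_i(1+a_i^3)$, and Equation~\eqref{eq:max-ai-cubic} yields $\E\opnorm{\diag(\Gamma)}\le C_t(1+\eta_n^3)$.

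\textbf{Off-diagonal entries via Mehler.} Fix $i\ne j$ and condition on $x_i,x_j$. The pair $(U_i,U_j)$ is centered Gaussian with variances $a_i,a_j$ and correlation exactly $\rho_{ij}$ from Lemma~\ref{lem:joint-scale-correlation-bounds}, since $\mathrm{Cov}(U_i,U_j)=x_i^\top\Sigma x_j/\tau_2$. Write $U_i=\sqrt{a_i}\,g$ and $U_j=\sqrt{a_j}\,g'$ with $(g,g')$ standard bivariate normal with correlation $\rho_{ij}$. Because $Q_{a_i},Q_{a_j}\in L^2(N(0,1))$ and $|\rho_{ij}|\le1$, Mehler's formula gives the $L^2$-convergent identity
\[
  \Gamma_{ij}=\sum_{r\ge0} r!\,\gamma_r(a_i)\gamma_r(a_j)\rho_{ij}^r .
\]
We split this sum into the terms $r\le3$ and the tail $r\ge4$.
\begin{itemize}
\item For $r\le3$, Lemma~\ref{lem:scaled-tail-coefficients} gives
\[
  |\gamma_0\gamma_0|+|\gamma_1\gamma_1\rho_{ij}|\le C_t\delta_i^2\delta_j^2,
  \qquad
  |\gamma_2\gamma_2\rho_{ij}^2|+|\gamma_3\gamma_3\rho_{ij}^3|\le C_t|\delta_i||\delta_j|(1+|\delta_i|)(1+|\delta_j|)\rho_{ij}^2 .
\]
\item For the tail, use $|\rho_{ij}|^r\le\rho_{ij}^4$ for $r\ge4$, then Cauchy--Schwarz and Equation~\eqref{eq:tail-high-coeff-l2}:
\[
  \Bigl|\sum_{r\ge4}r!\,\gamma_r(a_i)\gamma_r(a_j)\rho_{ij}^r\Bigr|
  \le C_t\rho_{ij}^4(1+a_i^3)^{1/2}(1+a_j^3)^{1/2}.
\]
\end{itemize}
Absorbing every factor $1+|\delta|$ into $1+a^m$, each surviving monomial has total degree at least $4$ in $(|\delta_i|,|\delta_j|,|\rho_{ij}|)$. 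Squaring therefore gives degree at least $8$, and Lemma~\ref{lem:joint-scale-correlation-bounds} yields $\E\Gamma_{ij}^2\le C_t\effectivedim^{-4}$; extra powers only help, since $\effectivedim\ge1$.

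\textbf{Assembly.} By Jensen,
\[
  \E\opnorm{\Gamma-\diag(\Gamma)}\le\Bigl(\sum_{i\ne j}\E\Gamma_{ij}^2\Bigr)^{1/2}\le C_t\,n\,\effectivedim^{-2}.
\]
Adding the diagonal bound gives Equation~\eqref{eq:tail-correlation-assumption}.

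The step I expect to need the most care is the Mehler step. One has to justify the expansion conditionally, which needs only $L^2$ membership of $Q_a$ and holds even when $|\rho_{ij}|=1$. One also has to verify that the degree count really reaches $4$ for every low-order term, including the $\delta^2$ corrections in $\gamma_2$ and $\gamma_3$. If the Frobenius bound turned out too lossy for some term, the fallback would be to treat that piece, for instance the $r=2$ term, as a feature Gram matrix in the style of Lemma~\ref{lem:cubic-correlation-matrix}. The degree count above suggests this fallback is unnecessary.
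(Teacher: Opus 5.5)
Your proposal is correct and follows the paper's proof essentially step for step. You bound the diagonal via the cubic growth of $q_{\ge4}$ and Equation~\eqref{eq:max-ai-cubic}, and the off-diagonal entries via the Hermite (Mehler) expansion combined with Lemma~\ref{lem:scaled-tail-coefficients}, Cauchy--Schwarz on the $r\ge4$ tail, and the joint moment bound of Lemma~\ref{lem:joint-scale-correlation-bounds}. The only cosmetic difference is in how the rank-one $r=0$ piece $\delta_i^2\delta_j^2$ is handled: the paper peels it off and bounds it by $2\sum_i\delta_i^4$ before using Frobenius norms termwise, whereas your single Frobenius bound from $\E\Gamma_{ij}^2\le C_t\effectivedim^{-4}$ covers that piece equally well and gives the same order $n\effectivedim^{-2}$.
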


\begin{proof}
We treat the diagonal through the cubic growth bound and the off-diagonal
entries through the Hermite expansion.  Recall that
\(a_i=x_i^\top\Sigma x_i/\tau_2\).  Conditional on \(x_i\), we have
$\ip{x_i}{z} /\sqrt{\tau_2}
  \sim N(0,a_i).$ 
Since \(|q_{\ge4}(u)|\le C_t(1+|u|^3)\), we have
\begin{align*}
  \Gamma_{ii}
  &=
  \E_z \left[q_{\ge4}\!\left(\frac{\ip{x_i}{z}}{\sqrt{\tau_2}}\right)^2\right]
  \le
  C_t\left\{1+
  \E_z\left|\frac{\ip{x_i}{z}}{\sqrt{\tau_2}}\right|^6\right\}
  \le
  C_t(1+a_i^3).
\end{align*}
It follows from Equation~\eqref{eq:max-ai-cubic} that
\[
  \E\opnorm{\diag(\Gamma)}
  =
  \E\max_i\Gamma_{ii}
  \le C_t(1+\eta_n^3).
\]
For the case \(i\ne j\), conditional on \((x_i,x_j)\), the pair
\[
  \left(\frac{\ip{x_i}{z}}{\sqrt{\tau_2}},
  \frac{\ip{x_j}{z}}{\sqrt{\tau_2}}\right)
\]
is centered Gaussian with variances \(a_i,a_j\) and correlation
\(\rho_{ij}\).  Let \(g,h\) be standard Gaussians with
\(\E [gh]=\rho_{ij}\).
The Hermite expansion gives
\begin{equation}
  \Gamma_{ij}
  =
  \E\!\left[q_{\ge4}(\sqrt{a_i}\,g)q_{\ge4}(\sqrt{a_j}\,h)\right]
  =
  \sum_{r\ge0} r!\,\gamma_r(a_i)\gamma_r(a_j)\rho_{ij}^r .
  \label{eq:scaled-tail-correlation-expansion}
\end{equation}
Put $b_i=|\delta_i|+|\delta_i|^2$. 
We note that combining Equation~\eqref{eq:scaled-tail-correlation-expansion} with
Lemma~\ref{lem:scaled-tail-coefficients} gives
\begin{align*}
  |\Gamma_{ij}|
  \le C_t\Big(
    &\delta_i^2\delta_j^2
    +\delta_i^2\delta_j^2|\rho_{ij}|
    +b_i b_j|\rho_{ij}|^2  \notag
    +b_i b_j|\rho_{ij}|^3
    +(1+a_i^3)^{1/2}(1+a_j^3)^{1/2}|\rho_{ij}|^4
  \Big).
\end{align*}
In the above, the first four terms are the contribution from the indices \(r=0,1,2,3\) in
the sum underlying Equation~\eqref{eq:scaled-tail-correlation-expansion} and follow from Lemma~\ref{lem:scaled-tail-coefficients}.  For the contribution from the indices \(r\ge4\), the Cauchy--Schwarz inequality
and Equation~\eqref{eq:tail-high-coeff-l2} yield
\begin{align*}
  \sum_{r\ge4} r!|\gamma_r(a_i)\gamma_r(a_j)|\,|\rho_{ij}|^r
  &\le
  |\rho_{ij}|^4
  \bigg(\sum_{r\ge4}r!\gamma_r(a_i)^2\bigg)^{1/2}
  \bigg(\sum_{r\ge4}r!\gamma_r(a_j)^2\bigg)^{1/2}\\
  &\le
  C_t(1+a_i^3)^{1/2}(1+a_j^3)^{1/2}|\rho_{ij}|^4.
\end{align*}
We now pass from the entrywise estimates to an operator norm bound.  We treat
the off-diagonal part of the rank-one term separately:
\[
  \opnorm{(\delta_i^2\delta_j^2\mathbbm{1}\{i\ne j\})_{i,j}}
  \le
  \opnorm{(\delta_i^2\delta_j^2)_{i,j}}
  +\opnorm{\diag(\delta_1^4,\ldots,\delta_n^4)}
  \le 2\sum_i\delta_i^4.
\]
By Equation~\eqref{eq:delta-explicit-moments}, we then have $\E[\sum_{i=1}^n \delta_i^4] \leq Cn/\effectivedim^2$. For each remaining term, we use the basic inequality \(\opnorm{M}\le\fnorm{M}\), followed by Jensen's inequality and the joint
moment estimate \eqref{eq:joint-scale-correlation}.  For example,
\[
  \E\fnorm{\bigl(b_ib_j|\rho_{ij}|^2\mathbbm{1}\{i\ne j\}\bigr)_{ij}}
  \le
  \left\{
    \sum_{i\ne j}\E[b_i^2b_j^2|\rho_{ij}|^4]
  \right\}^{1/2}
  \le C_t\frac n{\effectivedim^2},
\]
because \(b_i^2\le2\delta_i^2+2\delta_i^4\).  The same calculation gives
orders \(n\effectivedim^{-5/2}\), \(n\effectivedim^{-5/2}\), and
\(n\effectivedim^{-2}\), respectively,
for the terms consisting of
\(\delta_i^2\delta_j^2|\rho_{ij}|\),
\(b_ib_j|\rho_{ij}|^3\), and
\((1+a_i^3)^{1/2}(1+a_j^3)^{1/2}|\rho_{ij}|^4\).
Consequently, we have
\[
  \E\opnorm{\Gamma-\diag(\Gamma)}
  \le
  C_t\frac n{\effectivedim^2}.
\]
Combining this with bound on the diagonal term $\E\opnorm{\diag(\Gamma)}$ proves Equation~\eqref{eq:tail-correlation-assumption}, and completes the proof of the lemma.
\end{proof}

\subsection{Centering and edge density estimation}
\label{subsec:proof-centered-uncentered}

Finally, we control two operations used in the recovery argument.  The first lemma
quantifies the difference between centered and uncentered Gram matrices, and
the second controls the error in estimating the edge density.

\begin{lemma}
\label{lem:centered-uncentered}
Let \(x_1,\ldots,x_n\overset{\mathrm{iid}}{\sim}N(0,\Sigma)\), and let \(X\)
be the \(n\times d\) matrix with rows \(\{x_i^\top\}_{i=1}^n\).  Recall that we defined
\(H=I_n-n^{-1}\one\one^\top\).  Then, we have
\begin{equation}
  \left\{
    \E
    \fnorm{\frac{XX^\top-HXX^\top H}{\sqrt{\tau_2}}}^2
  \right\}^{1/2}
  \le
  C\left\{
    \sqrt n+
    \frac{\tau_1}{\sqrt{\tau_2}}
  \right\}.
  \label{eq:centered-uncentered-bound}
\end{equation}
\end{lemma}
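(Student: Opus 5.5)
Write $K:=XX^\top$ and $P:=\frac1n\one\one^\top$, so that $H=I_n-P$. Expanding gives
\[
  K-HKH = PK+KP-PKP .
\]
Set $\bar x:=\frac1n\sum_{i=1}^n x_i$, so that $\one^\top X = n\bar x^\top$. Then $PK=\one\,\bar x^\top X^\top$, which is the rank-one matrix with entries $(PK)_{ij}=\ip{\bar x}{x_j}$. Similarly $KP=X\bar x\,\one^\top$ and $PKP=\norm{\bar x}^2\one\one^\top$. The triangle inequality gives
\[
  \fnorm{K-HKH}\le 2\fnorm{PK}+\fnorm{PKP}
  = 2\sqrt n\,\norm{X\bar x}+n\norm{\bar x}^2 .
\]
It therefore suffices to bound $\E\norm{X\bar x}^2$ and $\E\norm{\bar x}^4$, divide by $\tau_2$, and apply Minkowski's inequality in $L^2$.

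The second quantity is straightforward. We have $\bar x\sim N(0,\Sigma/n)$, so $\E\norm{\bar x}^4=(\tau_1^2+2\tau_2)/n^2$. Hence
\[
  \frac{n^2\E\norm{\bar x}^4}{\tau_2}
  =\frac{\tau_1^2}{\tau_2}+2 ,
\]
and the square root of this is bounded by $C(\tau_1/\sqrt{\tau_2}+1)$.

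For the first quantity, write
\[
  \norm{X\bar x}^2=\sum_{j=1}^n \ip{x_j}{\bar x}^2,
  \qquad
  \ip{x_j}{\bar x}=\frac1n\norm{x_j}^2+\frac1n\sum_{i\ne j}\ip{x_i}{x_j}.
\]
Using $(a+b)^2\le 2a^2+2b^2$, the first piece contributes $\frac{2}{n^2}\E\norm{x_j}^4=\frac{2(\tau_1^2+2\tau_2)}{n^2}$.

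For the second piece, the cross terms $\E\ip{x_i}{x_j}\ip{x_k}{x_j}$ with $i\ne k$ vanish. This follows by conditioning on $x_j$ and using that $x_i$ and $x_k$ are independent and centered. What remains is
\[
  \frac{2}{n^2}(n-1)\,\E\ip{x_i}{x_j}^2=\frac{2(n-1)\tau_2}{n^2}.
\]
Summing over $j$ gives
\[
  \E\norm{X\bar x}^2\le C\left(\frac{\tau_1^2}{n}+\tau_2\right),
\]
and therefore
\[
  \frac{n\,\E\norm{X\bar x}^2}{\tau_2}\le C\left(\frac{\tau_1^2}{\tau_2}+n\right).
\]
Its square root is at most $C(\tau_1/\sqrt{\tau_2}+\sqrt n)$.

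Combining the two estimates gives \eqref{eq:centered-uncentered-bound}. The only point needing care is the vanishing of the cross terms in $\E\norm{X\bar x}^2$. A crude bound such as $\opnorm{X}^2\norm{\bar x}^2$ would lose a factor, so the exact second-moment computation is needed. No earlier lemmas are required beyond the Gaussian moment identities $\E\norm{x}^4=\tau_1^2+2\tau_2$ and $\E\ip{x_i}{x_j}^2=\tau_2$.
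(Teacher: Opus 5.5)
Your proof is correct and follows essentially the same route as the paper: the same rank-at-most-two decomposition of $XX^\top-HXX^\top H$, the pointwise bound $\fnorm{\cdot}\le 2\sqrt n\,\norm{\cdot}+n\norm{\bar x}^2$, exact second and fourth moments, and Minkowski in $L^2$. The only difference is in the cross term. You keep the uncentered vector $X\bar x$ and compute $\E\norm{X\bar x}^2$ by direct expansion, which picks up an extra but harmless $\tau_1^2/n$ term. The paper instead writes this term as $X_c\bar x$ with $X_c=HX$, and uses the Gaussian independence of $X_c$ and $\bar x$ to get $\E\norm{X_c\bar x}^2=\frac{n-1}{n}\tau_2$. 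Since $X\bar x=X_c\bar x+\norm{\bar x}^2\one$, the two decompositions express the same matrix identity.
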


\begin{proof}[Proof of Lemma~\ref{lem:centered-uncentered}]
We use the empirical-mean decomposition that converts centered recovery into
uncentered recovery.  Set
\[
  \bar x=\frac1n\sum_{i=1}^n x_i,
  \qquad
  X_c=HX.
\]
Then \(X=X_c+\one\bar x^\top\) and \(X_c^\top\one=0\).  Expanding \(XX^\top\)
gives
\begin{equation}
  XX^\top-HXX^\top H
  =
  X_c\bar x\,\one^\top+\one\bar x^\top X_c^\top
  +\|\bar x\|^2\one\one^\top .
  \label{eq:mean-decomposition}
\end{equation}
The right-hand side has rank at most two.  Indeed, the column and row spaces of
the first two terms lie in the span of \(X_c\bar x\) and \(\one\), while the
last term has rank one in the direction \(\one\).

For a rank-one matrix \(uv^\top\), the Frobenius norm equals
\(\norm{u}\norm{v}\).  The triangle inequality in
\eqref{eq:mean-decomposition} therefore gives the pointwise bound
\[
  \fnorm{\frac{XX^\top-HXX^\top H}{\sqrt{\tau_2}}}
  \le
  \frac{2\sqrt n}{\sqrt{\tau_2}}\norm{X_c\bar x}
  +\frac n{\sqrt{\tau_2}}\|\bar x\|^2.
\]
One can verify that $\E[X_c \bar{x}] = 0$.
Because the sample is Gaussian, this implies that the centered matrix \(X_c\) and the sample
mean \(\bar x\) are independent.  Moreover, we have
\[
  \E(X_c^\top X_c)=(n-1)\Sigma,
  \qquad
  \E(\bar x\bar x^\top)=\frac{\Sigma}{n}.
\]
From independence of $X_c$ and $\bar{x}$, we get
\[
  \E\norm{X_c\bar x}^2
  =\frac{n-1}{n}\tr(\Sigma^2)
  =\frac{n-1}{n}\tau_2.
\]
Since \(\bar x\sim N(0,\Sigma/n)\), we also have
\[
  \E\|\bar x\|^4
  =\frac{\tau_1^2+2\tau_2}{n^2}.
\]
Minkowski's inequality in \(L^2\), together with \(\tau_1^2\ge\tau_2\), now
proves Equation~\eqref{eq:centered-uncentered-bound}.
\end{proof}

The following result provides guarantees on the edge density estimate.

\begin{lemma}
\label{lem:edge-density-concentration}
Fix a compact range of thresholds \(t\).  Under the anisotropic Gaussian model, we have
\[
  \left\{\E|\hat p-p_{\mathrm G}|^2\right\}^{1/2}
  \le C_t\left(n^{-1/2}+\effectivedim^{-1}\right).
\]
Moreover, for all sufficiently large \(n\) and every \(s>0\),
\[
  \Pp\left\{|\hat p-p_{\mathrm G}|>\frac{C_t}{\effectivedim}+s\right\}
  \le 2\exp\left(-\frac{ns^2}{2}\right).
\]
\end{lemma}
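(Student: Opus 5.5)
We split the error into a bias part and a fluctuation part:
\[
\hat p_0-p_{\mathrm G}=(\hat p_0-p)+(p-p_{\mathrm G}),
\]
and the clipping in $\hat p$ only helps: since $p_{\mathrm G}\in(0,1)$ and clipping projects onto $[n^{-2},1-n^{-2}]$, we have $|\hat p-p_{\mathrm G}|\le|\hat p_0-p_{\mathrm G}|+n^{-2}$. Both claimed bounds then follow from a deterministic bias bound $|p-p_{\mathrm G}|\le C_t/\effectivedim$ together with concentration of the U-statistic $\hat p_0=\binom n2^{-1}\sum_{i<j}\mathbbm 1\{Z_{ij}\ge t\}$ around its mean $p$. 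The $n^{-2}$ from clipping is absorbed into $n^{-1/2}$, and, for the tail bound, into $s$ after slightly enlarging constants, or into $C_t/\effectivedim$ for large $n$.

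\emph{Bias.} Condition on $x_i$. Then $Z_{ij}\sim N(0,a_i)$ with $a_i=x_i^\top\Sigma x_i/\tau_2$, so $\Pp(Z_{ij}\ge t\mid x_i)=\bar\Phi(t/\sqrt{a_i})=:F(a_i)$ and $p=\E F(a_i)$. We Taylor expand $F$ around $a=1$:
\[
F(a)=p_{\mathrm G}+F'(1)(a-1)+R(a),\qquad |R(a)|\le C_t(a-1)^2 .
\]
The remainder bound holds locally by smoothness for $|a-1|\le1/2$, and globally because $F$ is bounded by $1$ while $|a-1|\ge 1/2$ off that range. This is exactly the same argument as for $\gamma_0$ in Lemma~\ref{lem:scaled-tail-coefficients}, where in fact $F(a)-p_{\mathrm G}-\beta_2(a-1)=\gamma_0(a)$. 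Since $\E a_i=1$, the linear term vanishes, and Lemma~\ref{lem:scale-correlation-bounds} with $p=2$ gives
\[
|p-p_{\mathrm G}|\le C_t\E\delta_i^2\le C_t/\effectivedim .
\]

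\emph{Fluctuation.} $\hat p_0$ is a function of the independent vectors $x_1,\dots,x_n$. Changing a single $x_k$ alters at most $n-1$ of the $\binom n2$ indicators, so it changes $\hat p_0$ by at most $2/n$. McDiarmid's bounded-difference inequality therefore gives
\[
\Pp(|\hat p_0-p|>s)\le 2\exp\!\Big(-\frac{2s^2}{n(2/n)^2}\Big)=2e^{-ns^2/2},
\]
which is exactly the stated tail once it is combined with the bias bound. Alternatively, Hoeffding's U-statistic bound gives the same tail up to constants. For the second-moment statement, the Efron--Stein inequality, or integrating the tail, gives $\E(\hat p_0-p)^2\le C/n$. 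Minkowski's inequality then combines this with the bias bound and the clipping error to yield $C_t(n^{-1/2}+\effectivedim^{-1})$.

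The main point needing care is the bias step, namely that the first-order term cancels exactly because $\E a_i=1$, and that the quadratic Taylor remainder is uniform over all $a>0$ for $t$ in a compact set. Both are already supplied by the $\gamma_0$ estimate, so no step is genuinely hard.
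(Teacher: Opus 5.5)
Your proposal is correct and follows the paper's overall structure: a bias/fluctuation split, bounded differences of size $2/n$ fed into McDiarmid and Efron--Stein, and then clipping. The one real difference is the bias step. The paper Taylor-expands $F_t$ only on the event $\mathcal A=\{a_1\ge 1/2\}$, so the linear term no longer cancels exactly. It restores the cancellation through $\E[(a_1-1)\mathbbm{1}_{\mathcal A}]=-\E[(a_1-1)\mathbbm{1}_{\mathcal A^c}]$ and then pays $C_t\Pp(\mathcal A^c)$, which needs a weighted chi-square lower-tail bound $\Pp\{a_1<1/2\}\le e^{-c\effectivedim}$. You instead note that $F'(1)=t\varphi(t)/2=\beta_2$, so the remainder $F(a)-p_{\mathrm G}-\beta_2(a-1)$ is exactly $\gamma_0(a)$. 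Its global bound $|\gamma_0(a)|\le C_t(a-1)^2$ for all $a>0$ is already proved in Lemma~\ref{lem:scaled-tail-coefficients}. With that, $\E a_i=1$ removes the linear term exactly, and only $\E\delta_i^2=2/\effectivedim$ is needed, with no tail estimate on $a_i$. Your route is shorter and reuses existing machinery. The paper's route is self-contained, since it does not depend on the tail-coefficient lemma, at the cost of the truncation bookkeeping.

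One small fix is needed in the clipping step. The additive $n^{-2}$ slack in $|\hat p-p_{\mathrm G}|\le|\hat p_0-p_{\mathrm G}|+n^{-2}$ does not give the tail inequality with the stated constants. Absorbing it into $s$ turns the exponent into $n(s-n^{-2})^2/2$. Absorbing it into $C_t/\effectivedim$ requires $\effectivedim\lesssim n^2$, which the lemma does not assume. Use instead the sharper fact you allude to. For $t$ in the compact range, $p_{\mathrm G}\in[n^{-2},1-n^{-2}]$ once $n$ is large, and projection onto an interval containing $p_{\mathrm G}$ is nonexpansive toward $p_{\mathrm G}$. This gives $|\hat p-p_{\mathrm G}|\le|\hat p_0-p_{\mathrm G}|$ with no slack. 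That is exactly how the paper handles clipping, and it is why the statement is restricted to sufficiently large $n$.
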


\begin{proof}
We first control the deterministic difference between the true marginal edge
probability and its Gaussian reference value, and then control the
\(U\)-statistic fluctuation around the true marginal probability.  Let
\[
  p=\Pp\{\ip{x_1}{x_2}/\sqrt{\tau_2}\ge t\}
\]
denote the true marginal edge probability; recall that
\(p_{\mathrm G}=\bar\Phi(t)\) is its standard-Gaussian reference value.
Conditional on \(x_1\), we have
\[
  \frac{\ip{x_1}{x_2}}{\sqrt{\tau_2}}
  \sim N(0,a_1) \text{ where }
  a_1=\frac{x_1^\top\Sigma x_1}{\tau_2}.
\]
For \(v>0\), define
\[
  F_t(v):=\bar\Phi(t/\sqrt v).
\]
The conditional edge probability is therefore
\[
  \Pp\left\{
    \frac{\ip{x_1}{x_2}}{\sqrt{\tau_2}}\ge t
    \,\middle|\,x_1
  \right\}
  =F_t(a_1).
\]
Averaging over \(x_1\) gives \(p=\E F_t(a_1)\), whereas
\(p_{\mathrm G}=F_t(1)\).  Consequently,
\[
  p-p_{\mathrm G}
  =
  \E\{F_t(a_1)-F_t(1)\}.
\]
The conditional variance \(a_1\) satisfies
\[
  \E(a_1-1)=0,\qquad
  \E(a_1-1)^2=\frac2\effectivedim,\qquad
  \Pp\{a_1<1/2\}\le e^{-c\effectivedim},
\]
where the last inequality follows from a weighted chi-square lower-tail bound.
Let \(\mathcal A=\{a_1\ge1/2\}\).  On \(v\ge1/2\), the first two
derivatives of \(F_t\) are uniformly bounded for \(t\) in the fixed compact
interval.  Taylor's theorem therefore gives, on \(\mathcal A\),
\[
  F_t(a_1)-F_t(1)
  =
  F_t'(1)(a_1-1)+R_1,
  \qquad
  |R_1|\le C_t(a_1-1)^2.
\]
Although this expansion is restricted to \(\mathcal A\), the linear term is
still negligible.  Indeed, the identity \(\E(a_1-1)=0\) and the fact that $\mathbbm{1}_{\mathcal A} + \mathbbm{1}_{\mathcal A^c} = 1$ pointwise implies that 
\[
  \E\{(a_1-1)\mathbbm{1}_{\mathcal A}\}
  =
  -\E\{(a_1-1)\mathbbm{1}_{\mathcal A^c}\}.
\]
Since \(0\le a_1<1/2\) on \(\mathcal A^c\), we have
\(|a_1-1|\le1\) there.  Also,
\(|F_t(a_1)-F_t(1)|\le1\).  Splitting the expectation over
\(\mathcal A\) and \(\mathcal A^c\) now yields
\[
  \begin{aligned}
    |p-p_{\mathrm G}|
    &\le
    C_t\E(a_1-1)^2+C_t\Pp(\mathcal A^c)
    \le
    \frac{C_t}{\effectivedim}
    +C_t e^{-c\effectivedim}
    \le
    \frac{C'_t}{\effectivedim}.
  \end{aligned}
\]
Recall that the edge density statistic can be written as
\[
  \hat p_0
  =
  \frac{1}{\binom{n}{2}}
  \sum_{1\le i<j\le n}A_{ij}.
\]
Since \(\E A_{ij}=p\), this order-two \(U\)-statistic is unbiased:
\(\E\hat p_0=p\).  To control its variance, replace \(x_k\) by an
independent copy and denote the resulting statistic by \(\hat p_0^{(k)}\).
Only the \(n-1\) edge indicators incident to vertex \(k\) can change, and
each changes by at most one.  Therefore,
\[
  |\hat p_0-\hat p_0^{(k)}|
  \le
  \frac{n-1}{\binom{n}{2}}
  =
  \frac2n.
\]
The Efron--Stein inequality now gives
\[
  \Var(\hat p_0)
  \le
  \frac12\sum_{k=1}^n
  \E|\hat p_0-\hat p_0^{(k)}|^2
  \le
  \frac2n.
\]
Consequently,
\[
  \E|\hat p_0-p|^2
  =\Var(\hat p_0)
  \le \frac{C}{n}.
\]
The same \(2/n\) bounded difference estimate and McDiarmid's inequality give,
for every \(s>0\),
\[
  \Pp\{|\hat p_0-p|>s\}
  \le 2\exp\left(-\frac{ns^2}{2}\right).
\]

Since
\(\hat p_0-p_{\mathrm G}=(\hat p_0-p)+(p-p_{\mathrm G})\), the preceding
bias and fluctuation bounds control the deviation from \(p_{\mathrm G}\).
Because \(t\) lies in a fixed compact interval, we have \(0 < p_{\mathrm G} < 1\).  Therefore, for large enough \(n\), clipping \(\hat p_0\) to
\([n^{-2},1-n^{-2}]\) cannot increase its distance from \(p_{\mathrm G}\),
which proves both claims for \(\hat p\).
This completes the proof of the lemma.
\end{proof}

\section*{Acknowledgments and declaration of AI usage}
CM is supported in part by NSF CAREER Award 2338062.
VM gratefully acknowledges the support of the NSF (through award CCF-2239151 and award IIS-2212182), an Adobe Data
Science Research Award, and an Amazon Research Award.

The authors identified the central decomposition of the noise term into its quadratic, cubic, and higher-order Hermite components, building on the decoupling approach in VM's prior work~\cite{KRM25}. OpenAI’s GPT-5.5 was used to assist with preliminary calculations, and GPT-5.6 was used to assist in completing proof details and improving the bound in Proposition~\ref{prop:krm-fixed-degree}. The authors reviewed all AI-assisted proofs and take full responsibility for the contents of the paper.

\bibliographystyle{alpha}
\bibliography{ref}

\end{document}